\newcommand\@dotsep{4.5}
\def\@tocline#1#2#3#4#5#6#7{\relax
  \ifnum #1>\c@tocdepth 
  \else
    \par \addpenalty\@secpenalty\addvspace{#2}%
    \begingroup \hyphenpenalty\@M
    \@ifempty{#4}{%
      \@tempdima\csname r@tocindent\number#1\endcsname\relax
    }{%
      \@tempdima#4\relax
    }%
    \parindent\z@ \leftskip#3\relax
    \advance\leftskip\@tempdima\relax
    \rightskip\@pnumwidth plus1em \parfillskip-\@pnumwidth
    #5\leavevmode\hskip-\@tempdima #6\relax
    \leaders\hbox{$\m@th
      \mkern \@dotsep mu\hbox{.}\mkern \@dotsep mu$}\hfill
    \hbox to\@pnumwidth{\@tocpagenum{#7}}\par
    \nobreak
    \endgroup
  \fi}
\let\oldtocsection=\tocsection
\let\oldtocsubsection=\tocsubsection
\renewcommand{\tocsection}[2]{\hspace{0em}\oldtocsection{#1}{#2}}
\renewcommand{\tocsubsection}[2]{\hspace{22pt}\oldtocsubsection{#1}{#2}}
\newcommandx{\yaHelper}[2][1=\empty]{%
\ifthenelse{\equal{#1}{\empty}}%
  { \ensuremath{ \scriptstyle{ #2 } } } 
  { \raisebox{ #1 }[0pt][0pt]{ \ensuremath{ \scriptstyle{ #2 } } } }  
}   
\newcommandx{\yrightarrow}[4][1=\empty, 2=\empty, 4=\empty, usedefault=@]{%
  \ifthenelse{\equal{#2}{\empty}}
  { \xrightarrow{ \protect{ \yaHelper[ #4 ]{ #3 } } } } 
  { \xrightarrow[ \protect{ \yaHelper[ #2 ]{ #1 } } ]{ \protect{ \yaHelper[ #4 ]{ #3 } } } } 
}
\definecolor{darkgreen}{RGB}{0, 153, 51}
\definecolor{violet}{RGB}{112, 73, 170}
\definecolor{darkred}{RGB}{153, 0, 0}
\definecolor{darkdarkblue}{RGB}{0, 0, 102}
\definecolor{darkblue}{RGB}{153, 204, 255}
\definecolor{bluee}{RGB}{204, 230, 255}
\definecolor{bluee2}{RGB}{128, 191, 255}
\definecolor{shadow}{RGB}{82, 122, 122}
\definecolor{my_green1}{RGB}{0, 153, 0}
\definecolor{my_orange}{RGB}{255, 204, 0}
\definecolor{my_yellow}{RGB}{255, 255, 102}
\pgfplotsset{compat=1.16}
\def\@setOxy O(#1,#2,#3)x(#4,#5,#6)y(#7,#8,#9)%
\def\tikz@plane@origin{\pgfpointxyz{#1}{#2}{#3}}%
   \def\tikz@plane@x{\pgfpointxyz{#4}{#5}{#6}}%
   \def\tikz@plane@y{\pgfpointxyz{#7}{#8}{#9}}%
\newcommand{\gettikzxy}[3]{%
  \tikz@scan@one@point\pgfutil@firstofone#1\relax
  \edef#2{\the\pgf@x}%
  \edef#3{\the\pgf@y}%
}
\newcommand{\R}{\mathbb{R}}
\newcommand{\N}{\mathbb{N}}
\newcommand{\Z}{\mathbb{Z}}
\newcommand{\C}{\mathbb{C}}
\newcommand{\Aa}{\mathcal{A}}
\newcommand{\BB}{\mathcal{B}}
\newcommand{\GG}{\mathcal{G}}
\newcommand{\Hh}{\mathcal{H}}
\newcommand{\KK}{\mathcal{K}}
\newcommand{\QQ}{\mathcal{Q}}
\newcommand{\UU}{\mathscr{U}}
\newcommand{\ii}{\mathrm{i}}
\newcommand{\id}{\mathrm{id}}
\newcommand{\Find}{\mathrm{index}}
\newcommand{\p}{\varphi}
\newcommand{\e}{\varepsilon}
\newcommand{\w}{\widetilde}
\newcommand{\oo}{\overline}
\newcommand{\MM}{\mathbb{M}}
\newcommand{\ttb}{\mathbf{t}}
\newcommand{\ac}{\,{\scriptstyle\ll}\,}
\newcommand{\SOT}{\scalebox{0.82}{SOT}}
\newcommand{\sot}{\mathop{\scalebox{0.82}{\mbox{{\rm SOT-}}}\hspace*{-1pt}\lim}}
\newcommand{\wot}{\mathop{\scalebox{0.82}{\mbox{{\rm WOT-}}}\hspace*{-1pt}\lim}}
\newcommand{\n}[1]{\|#1\|}
\newcommand{\nn}[1]{{\vert\kern-0.25ex\vert\kern-0.25ex\vert #1 
    \vert\kern-0.25ex\vert\kern-0.25ex\vert}}
\newcommand{\lnn}[1]{{\left\vert\kern-0.25ex\left\vert\kern-0.25ex\left\vert #1 
    \right\vert\kern-0.25ex\right\vert\kern-0.25ex\right\vert}}
\newcommand{\dist}{\mathrm{dist}}
\renewcommand{\leq}{\leqslant}
\renewcommand{\geq}{\geqslant}
\newcommand{\wh}{\widehat}
\newcommand{\cs}{{\rm C}$^\ast$}
\newcommand{\Ext}{\mathrm{Ext}}
\newcommand{\Hom}{\mathrm{Hom}}
\newcommand{\usim}{\,\raise.17ex\hbox{$\scriptstyle\mathtt{\sim}$}}
\renewcommand{\ip}[2]{\langle #1,#2\rangle}
\newtheorem{theorem}{Theorem}[section]
\newtheorem{lemma}[theorem]{Lemma}
\newtheorem{proposition}[theorem]{Proposition}
\newtheorem{corollary}[theorem]{Corollary}
\theoremstyle{definition}
\newtheorem{definition}[theorem]{Definition}
\theoremstyle{remark}
\newtheorem{remark}[theorem]{Remark}
\newtheorem{example}[theorem]{Example}
\numberwithin{equation}{section}
\title{Compact perturbations of operator semigroups}
\subjclass[2020]{Primary 46L05, 46L80, Secondary 47D06}
\author{Tomasz Kochanek}
\address{Institute of Mathematics, University of Warsaw, Banacha~2, 02-097 Warsaw, Poland}
\email{tkoch@mimuw.edu.pl}
\keywords{Operator semigroup, Calkin algebra, lifting problems}
\thanks{This work has been supported by the National Science Centre grant no. 2020/37/B/ST1/01052.}
\begin{document}
\begin{abstract}
We study lifting problems for operator semigroups in the Calkin algebra $\QQ(\Hh)$, our approach being mainly based on the Brown--Douglas--Fillmore theory. With any normal $C_0$-semigroup $(q(t))_{t\geq 0}$ in $\QQ(\Hh)$ we associate an~extension $\Gamma\in\Ext(\Delta)$, where $\Delta$ is the inverse limit of certain compact metric spaces defined purely in terms of the spectrum $\sigma(A)$ of the generator of $(q(t))_{t\geq 0}$. By using Milnor's exact sequence, we show that if each $q(t)$ has a~normal lift, then the question whether $\Gamma$ is trivial reduces to the question whether the corresponding first derived functor vanishes. With the aid of the CRISP property and Kasparov's Technical Theorem, we provide geometric conditions on $\sigma(A)$ which guarantee splitting of $\Gamma$. If $\Delta$ is a~perfect compact metric space, we obtain in this way a~$C_0$-semigroup $(Q(t))_{t\geq 0}$ which lifts $(q(t))_{t\geq 0}$ on dyadic rationals.
\end{abstract}
\maketitle
\tableofcontents

\section{Introduction}
\noindent
Throughout the paper, $\Hh$ stands for an infinite-dimensional separable Hilbert space, $\BB(\Hh)$ and $\KK(\Hh)$ denote the algebras of all bounded linear operators on $\Hh$ and all compact linear operators on $\Hh$, respectively. The Calkin algebra is defined by $\mathcal{Q}(\Hh)=\BB(\Hh)/\KK(\Hh)$ and $\pi\colon \BB(\Hh)\to\QQ(\Hh)$ stands for the canonical quotient map. Typically, we write $\mathbb{H}$ for a~Hilbert space of density continuum, on which we sometimes represent the Calkin algebra. We will also use standard notions and facts from the theory of $C_0$-semigroups which can be found, e.g., in \cite{EN}.

There is a big area of lifting problems in the Calkin algebra, or more general corona algebras (see the recent book \cite{farah}), the basic question being whether an~abelian \cs-subalgebra $A$ of $\QQ(\Hh)$ admits an~abelian lift, that is, a~commutative \cs-subalgebra $B\subset\BB(\Hh)$ with $\pi[B]=A$. This problem goes back to the famous Weyl--von~Neumann--Berg--Sikonia theorem (\cite{berg}, \cite{sikonia}), and it is known that separable commutative \cs-subalgebras of $\QQ(\Hh)$ which admit a~lift are characterized as subalgebras of real rank zero \cs-subalgebras of $\QQ(\Hh)$, whereas the nonseparable case is more problematic (see \cite{JP} and \cite{SS}).

In this paper, we are interested in various lifting problems for operator semigroups. One way of formulating such a~problem is to assume that $(Q(t))_{t\geq 0}\subset\BB(\Hh)$ is a~collection of normal operators which satisfy the semigroup condition modulo the compact operators, that is,
\begin{equation}\label{mod_K}
    Q(s+t)-Q(s)Q(t)\in\KK(\Hh)\quad\mbox{for all }s,t\geq 0. 
\end{equation}
By defining $q(t)=\pi Q(t)$, we obtain an~operator semigroup $(q(t))_{t\geq 0}$ in $\QQ(\Hh)$, and a~natural regularity condition is to assume that this is a~$C_0$-semigroup. Obviously, speaking about $C_0$-semigroups in the Calkin algebra makes sense only in reference to some fixed representation of $\QQ(\Hh)$ on a~Hilbert space $\mathbb{H}$. Section~4 is devoted to the study of classical Calkin's representations and conditions guaranteeing that the semigroup in question is in fact \SOT-continuous. 

Alternatively, we can forget about the operators $Q(t)$ and just assume that $(q(t))_{t\geq 0}$ is a~$C_0$-semigroup of normal elements in $\QQ(\Hh)$. In this case, however, because of usual Fredholm index issues, $q(t)$ may not lift to a~normal operator in $\BB(\Hh)$. For the same reason the~\cs-subalgebra $\mathrm{C}^\ast(\pi(s))\cong C(\mathbb{T})$ of $\QQ(\Hh)$, where $s$ is the unilateral shift, does not have a~lift to a~commutative \cs-subalgebra of $\BB(\Hh)$. Similarly, if we pick any essentially normal, Fredholm operator $T\in\BB(\Hh)$ with nonzero index ($T=s$ is a~good choice), then by a~result of Eisner \cite{eisner}, there is a~$C_0$-semigroup $(q(t))_{t\geq 0}\subset\BB(\mathbb{H})$ with $q(1)=\pi T$. On the other hand, no compact perturbation of $T$ is embeddable into a~$C_0$-semigroup, as for any $K\in\KK(\Hh)$ we have $\mathrm{ind}(T+K)=\mathrm{ind}(T)\neq 0$ and nonbijective Fredholm operators are not embeddable into $C_0$-semigroups (see \cite[Cor.~3.2]{eisner}).

Our approach is generally based on the theory of extensions of compact metric spaces developed by Brown, Douglas and Fillmore in their famous work \cite{BDF}, where using some deep connections between homotopy theory, Fredholm index theory, and theory of extensions, they showed that an~operator $T\in\BB(\Hh)$ is of the form `normal plus compact' if and only if $\mathrm{ind}(\lambda I-T)=0$ for every $\lambda\in\C\setminus\sigma_{\mathrm{ess}}(T)$. One of our basic results (Proposition~\ref{P_spectrum}) says that with every normal $C_0$-semigroup in $\QQ(\Hh)$ one can associate an~extension $\Gamma\in\Ext(\Delta)$ of a~certain inverse limit $\Delta=\varprojlim\Omega_n$ of compact metric spaces $\Omega_n$. We then also have Milnor's exact sequence of the form

\begin{equation}\label{intr_ex_seq}
\begin{tikzcd}
0 \arrow[r] & \varprojlim{}^{(1)}\Ext_2(\Omega_n) \arrow[r] & \Ext(\Delta) \arrow[r, "P"] & \varprojlim\Ext(\Omega_n) \arrow[r] & 0.
\end{tikzcd}
\end{equation}
In Proposition~\ref{P_kernel} we show that the classical BDF condition quoted above, assumed for each $q(2^{-n})$, $n=0,1,2,\ldots$, guarantee that the resulting extension $\Gamma$ lies in the kernel of $P$. Therefore, in order to ensure that $\Gamma$ splits, it suffices to show that the first derived functor in \eqref{intr_ex_seq} vanishes, and this is the topic of Section~5 (see Theorems~\ref{empty_dir_T} and \ref{main_K_T}). We denote by $\Theta$ the zero element of the group $\Ext(\Delta)$.

\vspace*{2mm}\noindent
{\bf Summary of the main results. }Let $(Q(t))_{t\geq 0}$ be a~collection of normal operators in $\BB(\Hh)$ satisfying \eqref{mod_K}. Assume that $(q(t))_{t\geq 0}\subset\QQ(\Hh)$, defined by $q(t)=\pi Q(t)$ for $t\geq 0$, is a~$C_0$-semigroup with respect to some faithful $^\ast$-representation $\gamma\colon \QQ(\Hh)\to\BB(\mathbb{H})$. Let also $A$ be its infinitesimal generator, densely defined on $\mathbb{H}$. Then:

\vspace*{0mm}
\begin{enumerate}[label={\rm (\roman*)}, leftmargin=26pt]
\setlength{\itemsep}{3pt}

\item The spectrum of the \cs-algebra $\mathrm{C}^\ast(q(2^{-n}),1_{\QQ(\Hh)})$ is homeomorphic to the inverse limit $\Delta=\varprojlim\{\Omega_n,p_n\}$, where $p_n(z)=z^2$ and
$$
\Omega_n=\oo{\exp(2^{-n}\sigma(A))}\qquad (n=0,1,2,\ldots).
$$

\item There is an~extension $\Gamma\in\Ext(\Delta)$ such that $\Gamma=\Theta$ implies that there exists a~semigroup $(T(t))_{t\in\mathbb{D}}\subset\BB(\Hh)$, defined on positive dyadic rationals, such that $\pi T(t)=q(t)$ for every $t\in\mathbb{D}$.

\item We have the Milnor exact sequence of the form \eqref{intr_ex_seq}, and $\Gamma\in\mathrm{ker}\,P$.

\item Assuming that for each $n\in\N$, the set $\mathsf{A}(\Omega_n)=\{z\in\Omega_n\colon -z\in \Omega_n\}$ satisfies
$$
\oo{\Omega_n\setminus\mathsf{A}(\Omega_n)}\cap\mathsf{A}(\Omega_n)=\varnothing,
$$
and that $\Omega_n$ satisfies either an~`empty direction' condition, or a~`cross retract' condition (for details, see Theorems~\ref{empty_dir_T} and \ref{main_K_T}), we have $\Gamma=\Theta$.

\item If $\Delta$ is a~perfect compact metric space, and $\gamma$ is one of Calkin's representations of $\QQ(\Hh)$, then the obtained lifting $(T(t))_{t\in\mathbb{D}}$ is \SOT-continuous and it extends to a~$C_0$-semigroup $(T(t))_{t\geq 0}\subset\BB(\Hh)$.
\end{enumerate}

The above list of assertions is a~combination of Propositions~\ref{P_spectrum}, \ref{P_kernel}, Lemmas~\ref{lifting_L}, \ref{delta_cont_L}, and Theorems~\ref{no_isolated_T}, \ref{empty_dir_T} and \ref{main_K_T}. There are plenty of examples of $\sigma(A)$ for which one of the conjunctions mentioned in assertion (iv) is satisfied. Some of them are listed in the corollary below. Interestingly, there are also situations where Milnor's exact sequence on its own allows us to conclude that $\Gamma=\Theta$, even though neither the `empty direction' nor `cross retract' condition is satisfied. A~suitable example is given by the $2$-adic solenoid $\Sigma_2=\varprojlim\{\mathbb{T},z^2\}$; see Example~\ref{ex_imaginary}.

\vspace*{2mm}\noindent
{\bf Corollaries from the main results. }Assume $(Q(t))_{t\geq 0}$, $(q(t))_{t\geq 0}$, $A$ are as above, and let $\Omega_n$ ($n=0,1,2,\ldots$) be defined as in (i). In each of the following cases there exists a~semigroup $(T(t))_{t\in\mathbb{D}}\subset\BB(\Hh)$ such that $\pi T(t)=q(t)$ for every $t\in\mathbb{D}$:
\vspace*{0mm}
\begin{itemize}[leftmargin=26pt]
\setlength{\itemsep}{3pt}
\item $\sup\{\abs{\mathrm{Im}\,z}\colon z\in\sigma(A)\}<+\infty$;
\item the set $\mathrm{Re}\,\sigma(A)$ is finite and every `circle section' $\Omega_n\cap\{\abs{z}=r\}$ is a~symmetric set whose each connected component has length smaller than $\pi r$;
\item the set $\mathrm{Re}\,\sigma(A)$ is either finite or an~interval, and for every $s\in\mathrm{Re}\,\sigma(A)$, the section $\{t\in\R\colon s+\ii t\in\sigma(A)\}$ is a~half-line.
\end{itemize}

\noindent
For the proof, see Corollaries \ref{empty_cross_C} and \ref{milnorr_C}.

We also distinguish the class of compact metric spaces arising as inverse limits of the same form as $\Delta$ in assertion (i) above, and call them {\it admissible}. In Subsection~4.1 we study such spaces {\it per se}, and we provide necessary and sufficient conditions for the trivial extension $\Theta\in\Ext(X)$ to induce a~$C_0$-semigroup in $\QQ(\Hh)$, where $X$ is an~admissible compact metric space $X$.


\section{Preliminaries and tools}
\subsection{Connections with the BDF theory}
Here, we shall collect some necessary basic facts from the Brown--Douglas--Fillmore theory of extensions, and explain its relation to our results. For more details on the BDF theory, the reader is referred to the seminal paper \cite{BDF} or to nice expositions of that theory in \cite[Ch.~VII]{blackadar-K} or \cite[Ch.~IX]{davidson}.

Let $X$ be a compact metric space. By an~{\it extension} of $C(X)$ ({\it by} $\KK(\Hh)$) we mean any pair $(\Aa,\p)$, where $\Aa$ is a~\cs-subalgebra of $\BB(\Hh)$ containing the compact operators and the~identity operator, and $\p\colon\Aa\to C(X)$ is a~$^\ast$-homomorphism such that
\begin{equation*}
\begin{tikzcd}
0 \arrow[r] & \KK(\Hh) \arrow[r, "\iota"] & \mathcal{A} \arrow[r, "\p"] & C(X) \arrow[r] & 0
\end{tikzcd}
\end{equation*}
is an exact sequence, where $\iota$ is the inclusion map. To every extension one can associate the~so-called {\it Busby invariant} which is a~unital $^\ast$-monomorphism $\tau\colon C(X)\to\QQ(\Hh)$ defined as $\tau=\pi\p^{-1}$, which is the inverse of the~identification $\mathcal{A}/\KK(\Hh)\cong C(X)$. Conversely, any such $^\ast$-monomorphism gives rise to an~extension $(\pi^{-1}\tau(C(X)),\tau^{-1}\pi)$. In this setting, two extensions of $C(X)$ are called {\it equivalent} if the associated Busby invariants $\tau_1$ and $\tau_2$ satisfy $\tau_2=\pi(U)^\ast\tau_1\pi(U)$ for some unitary $U\in\BB(\Hh)$. We write $[\tau]$ for the extension (equivalence class) generated by a~unital $^\ast$-monomorphism $\tau\colon C(X)\to\QQ(\Hh)$.

Of fundamental importance is the fact that the~collection $\Ext(X)$ of all equivalence classes of extensions of $C(X)$ forms a~group when equipped with an~operation $+$ defined in terms of $^\ast$-monomorphisms $C(X)\to\QQ(\Hh)$ as $[\tau_1]+[\tau_2]=[\tau_1\oplus\tau_2]$. Here, we use an~isomorphism $\Hh\oplus\Hh\cong\Hh$ which allows us to identify the matrix algebra $\MM_2(\QQ(\Hh))$ with $\QQ(\Hh)$, as $\MM_2(\KK(\Hh))$ is mapped onto $\KK(\Hh)$. The zero element of that group can be constructed as follows. Take any infinite direct sum decomposition $\Hh=\bigoplus_{i=1}^\infty\Hh_i$, where each $\Hh_i$ is infinite-dimensional, pick a~countable dense subset $\{\xi_{i}\colon i\in\N\}$ of $X$ and define $\sigma\colon C(X)\to\BB(\Hh)$ by 
\begin{equation}\label{sigma_neutral}
\sigma(g)=\bigoplus_{i=1}^\infty g(\xi_{i})I_i,
\end{equation}
where $I_i$ is the identity operator on $\Hh_i$. Plainly, there are no nonzero compact operators in the range of $\sigma$, which implies that $\pi\sigma\colon C(X)\to\QQ(\Hh)$ is a~$^\ast$-monomorphism admitting the section $\sigma$, hence it determines the trivial extension of $C(X)$. That all trivial extensions are equivalent follows from the celebrated Weyl-von~Neumann--Berg theorem; see \cite{berg} and \cite[Thms.~II.4.6 and IX.2.1]{davidson}. Throughout this paper, we denote by $\Theta\in\Ext(X)$ the zero element in the~group of extensions of $X$, i.e. $\Theta=[\sigma]$, where $\sigma$ is given as in \eqref{sigma_neutral}. The property that each $[\tau]\in\Ext(X)$ has an~inverse, that is, there exists a~Busby invariant $\sigma$ with $[\tau\oplus\sigma]=\Theta$, is based on the~lifting property of positive unital maps on $C(X)$ and Naimark's dilation theorem (see \cite[Thm.~IX.5.1]{davidson}).

In this paper, we shall be concerned with extensions of some special projective limits of compact subsets of the complex plane. Recall that the projective (inverse) limit of an~inverse system $\{X_n,f_n\}_{n\geq 0}$, that is, a~sequence of topological spaces and continuous maps $f_n\colon X_{n+1}\to X_n$, is defined as
$$
\varprojlim X_n=\Big\{\mathbf{x}=(x_n)_{n=0}^\infty\in \prod_{n=0}^\infty X_n\colon f_n(x_{n+1})=x_n\,\,\,\mbox{for }n\geq 0\Big\}.
$$
We denote by $\pi_n\colon\varprojlim X_n\to X_n$ the projection onto the~$n^{\mathrm{th}}$ coordinate, that is, $\pi_n(\mathbf{x})=x_n$ for $\mathbf{x}=(x_n)_{n=0}^\infty$. The topology on $\varprojlim X_n$ is the weakest topology under which all the maps $\pi_n$ are continuous. 

As we will see, certain inverse limits of compact metric spaces arise naturally when considering operator semigroups indexed by positive dyadic rational numbers. We denote by $\mathbb{D}$ the set of such numbers, i.e. $\mathbb{D}=\{k2^{-m}\colon k,m\in\N\}$. The following observation will be used several times in the sequel. Namely, if $(q(2^{-n}))_{n\geq 0}$ is a~sequence of elements of any \cs-algebra $\mathcal{A}$ such that \vspace*{-1pt} $q(2^{-(n+1)})^2=q(2^{-n})$ for each $n=0,1,2,\ldots,$ then for any $t\in\mathbb{D}$ written in the form $t=t_0+\sum_{i=1}^j 2^{-m_i}$ with integers $t_0,j\geq 0$ and $1\leq m_1<\ldots<m_j$, we define $q(t)=q(1)^{t_0}q(2^{-m_1})\cdot\ldots\cdot q(2^{-m_j})$. It is then readily seen that $q(s+t)=q(s)q(t)$ for all $s,t\in\mathbb{D}$, hence we have extended $(q(2^{-n}))_{n\geq 0}$ to a~semigroup $(q(t))_{t\in\mathbb{D}}$, to which we shall refer as a~{\it dyadic semigroup}. Obviously, such an~extension is unique if we want to preserve the semigroup property. Given a~faithful $^\ast$-representation $\gamma$ of $\mathcal{A}$ on a~Hilbert space $\mathbb{H}$, we call $(q(t))_{t\in\mathbb{D}}$ a~$C_0$-{\it semigroup}, provided that there exists a~$C_0$-semigroup $(T(t))_{t\geq 0}\subset\BB(\mathbb{H})$ such that $T(t)=\gamma(q(t))$ for every $t\in\mathbb{D}$. In particular, if $(Q(t))_{t\in\mathbb{D}}\subset\BB(\Hh)$ is a~dyadic semigroup, we say that $(Q(t))_{t\in\mathbb{D}}$ is a~$C_0$-semigroup if it can be extended to a~$C_0$-semigroup in $\BB(\Hh)$ (in the usual sense) defined on $[0,\infty)$. This can be done if and only if $(Q(t))_{t\in\mathbb{D}}$ is \SOT-continuous and uniformly bounded on every bounded set of dyadic rationals (see Remark~\ref{cont1_R} below).

Anticipating our considerations presented in the next sections we distinguish the following two classes of compact metric spaces.

\begin{definition}\label{admissible_D}
{\bf (a) }We call a compact metric space $X$ {\it admissible} if $X=\varprojlim\{X_n,f_n\}_{n\geq 0}$ for some compact subsets $X_n$ of $\C$ being of the form
\begin{equation}\label{adm_Z_D}
X_n=\oo{\exp(2^{-n}Z)}\qquad (n=0,1,2,\ldots),
\end{equation}
where $Z\subset\C$ is a~fixed closed set such that $\sup_{z\in Z}\mathrm{Re}\,z<\infty$, and the connecting maps are all given by $f_n(z)=z^2$.

\vspace*{2mm}\noindent
{\bf (b) }Let $\gamma\colon\QQ(\Hh)\to\BB(\mathbb{H})$ be a~faithful $^\ast$-representation of the Calkin algebra. An~admissible compact metric space $X=\varprojlim\{X_n,f_n\}$ is said to have a~$\gamma$-$C_0$-{\it lifting property}, provided that the following condition holds true: For every $C_0$-semigroup $(q(t))_{t\in\mathbb{D}}$ with generator $A$ such that $\sigma(A)=Z$, where $Z$ satisfies \eqref{adm_Z_D}, every dyadic semigroup $(Q(t))_{t\in\mathbb{D}}$ satisfying:

\begin{itemize}[leftmargin=24pt]
\setlength{\itemsep}{3pt}
\item $\pi Q(t)=q(t)$, and 

\item $\n{Q(t)}\leq Ce^{\zeta t}$ for every $t\in\mathbb{D}$,
\end{itemize}
with some constants $C,\zeta<\infty$, is \SOT-continuous and therefore it is a~$C_0$-semigroup.
\end{definition}

The main starting point for our considerations is the fact that every normal $C_0$-semigroup in $\QQ(\Hh)$ gives rise to an~extension of an~admissible compact metric space which is completely described in terms of the spectrum of the infinitesimal generator. This will be proved in details in Section~3 (see Proposition~\ref{P_spectrum}). For now, we note that with every such extension one can naturally associate a~dyadic operator semigroup in $\QQ(\Hh)$.

\begin{definition}\label{ind_D}
Let $X$ be an admissible compact metric space and let $\Gamma\in\Ext(X)$ be the extension induced by a~Busby invariant $\tau\colon C(X)\to\QQ(\Hh)$. We say that $\Gamma$ {\it induces a~dyadic semigroup} $(q(t))_{t\in\mathbb{D}}\subset\QQ(\Hh)$, provided that $(q(t))_{t\in\mathbb{D}}$ is the unique semigroup extension of $(q(2^{-n}))_{n\geq 0}$ given by 
$$
q(2^{-n})=\tau(\pi_n)\quad\,\, (n=0,1,2,\ldots),
$$

\vspace*{1mm}\noindent
where $\pi_n\in C(X)$ is the projection onto the $n^{\mathrm{th}}$ coordinate.
\end{definition}

\begin{remark}\label{rem_exp}
If $X$ is an inverse limit of sets $X_n\subset\C$ given by \eqref{adm_Z_D}, then since any Busby invariant (being a~$^\ast$-monomorphism) is an~isometry, the dyadic semigroup $(q(t))_{t\in\mathbb{D}}$ induced by an~arbitrary element of $\Ext(X)$ satisfies the estimate
$$
\n{q(2^{-n})}\leq\exp(2^{-n}\zeta)\quad\,\, (n=0,1,2,\ldots),
$$
where $\zeta=\sup_{z\in Z}\mathrm{Re}\,z$.
\end{remark}

It is not obvious at all whether the dyadic semigroup induced by means of Definition~\ref{ind_D} is \SOT-continuous or, assuming it is \SOT-continuous and admits a~lift to an~operator semigroup in $\BB(\Hh)$, whether this lift must be a~$C_0$-semigroup. We shall deal with these two problems in Section~4 where we investigate a~special class of representations of $\QQ(\Hh)$ introduced by Calkin~\cite{calkin}. In particular, we show there that every admissible compact metric space with no isolated points has the $\gamma$-$C_0$-lifting property (see Theorem~\ref{no_isolated_T}).

\begin{definition}\label{C0_D}
For an admissible compact metric space $X$ and a~faithful $^\ast$-representation $\gamma\colon\QQ(\Hh)\to\BB(\mathbb{H})$, we define $\Ext_{C_0,\gamma}(X)\subseteq\Ext(X)$ to be the set of extensions inducing $C_0$-semigroups in $\QQ(\Hh)$.
\end{definition}

\begin{remark}\label{C0_correct_R}
The above definition is correctly posed in the sense that the question whether a~given $[\sigma]\in\Ext(X)$ induces a~$C_0$-semigroup does not depend on the choice of representative. For, suppose $U\in\BB(\Hh)$ is unitary and $\tau=\pi(U)^\ast\sigma\pi(U)$. Then, for any $\mathbf{x}\in\mathbb{H}$, $m,n\in\N$, and any complex polynomials $P,P_m\in\C[z]$, we have
\begin{equation*}
\begin{split}
    \gamma\big[\pi(U)^\ast\sigma(P_m\circ\pi_m) \pi(U)\big]\mathbf{x}&-\gamma(\sigma(P\circ\pi_n))\mathbf{x}\\
    &=\gamma\big[\pi(U)^\ast\sigma(P_m\circ\pi_m-P\circ\pi_n)\pi(U)\big]\mathbf{x}\xrightarrow[\,m\to\infty\,]{}0,
\end{split}
\end{equation*}
provided that $\sigma$ induces a~$C_0$-semigroup and $P_m\circ\pi_m$ converge pointwise on $X$ to $P\circ\pi_n$. This shows that $\tau$ generates a~$C_0$-semigroup if and only if so does $\sigma$. We have used here an~observation that each element of the induced dyadic semigroup corresponds, as in Definition~\ref{ind_D}, to a~complex polynomial on the $n^{\mathrm{th}}$ coordinate of $X$, for some $n\in\N$ (see also the proof of Lemma~\ref{lifting_L}).
\end{remark}

\subsection{Lifting problems}
We shall now briefly explain how the lifting problem for operator semigroups is related to the lifting problem for separable abelian \cs-subalgebras of $\QQ(\Hh)$. We start with a~lemma saying that, apart from the continuity issue, our lifting problem is really about deciding whether the induced extension is the trivial one.

\begin{lemma}[{\bf `Lifting lemma'}]\label{lifting_L}
Let $X$ be an admissible compact metric space having the $\gamma$-$C_0$-lifting property, for a~fixed faithful $^\ast$-representation $\gamma\colon\QQ(\Hh)\to\BB(\mathbb{H})$, and let $\Gamma\in\Ext_{C_0,\gamma}(X)$. Then $\Gamma=\Theta$ if and only if the dyadic semigroup $(q(t))_{t\in\mathbb{D}}\subset\QQ(\Hh)$ induced by $\Gamma$ admits a~lift to a~dyadic $C_0$-semigroup of normal operators $(Q(t))_{t\in\mathbb{D}}\subset\BB(\Hh)$ {\rm (}i.e. $\pi Q(t)=q(t)$ for every $t\in\mathbb{D}${\rm )} such that the spectra of $q(2^{-n})$ and $Q(2^{-n})$ coincide for each $n\in\N$.
\end{lemma}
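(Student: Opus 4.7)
The plan is to extract a $^*$-homomorphism section from $\Gamma=\Theta$ and use it to build the lift. If $\tau\colon C(X)\to\QQ(\Hh)$ is the Busby invariant of $\Gamma$, then $\Gamma=\Theta$ is equivalent to the existence of a unital injective $^*$-homomorphism $\hat\tau\colon C(X)\to\BB(\Hh)$ with $\pi\circ\hat\tau=\tau$ (in one direction this is immediate from the canonical neutral $\sigma=\pi\hat\sigma$ of \eqref{sigma_neutral} via conjugation by a unitary $U$ realizing the equivalence $\tau=\pi(U)^*\sigma\pi(U)$, yielding $\hat\tau=U^*\hat\sigma U$). Setting $Q(2^{-n}):=\hat\tau(\pi_n)$ produces pairwise commuting normal operators, and since the connecting maps are $f_n(z)=z^2$, the identity $\pi_{n+1}^2=\pi_n$ holds in $C(X)$, whence $Q(2^{-(n+1)})^2=Q(2^{-n})$. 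This extends uniquely to a dyadic semigroup $(Q(t))_{t\in\mathbb{D}}$, and $\pi Q(t)=q(t)$ holds for $t=2^{-n}$ by construction and for all $t\in\mathbb{D}$ by multiplicativity. Injectivity of $\hat\tau$ (forced by injectivity of $\tau$) yields $\sigma(Q(2^{-n}))=\pi_n(X)=X_n=\sigma(q(2^{-n}))$. Finally, $\|Q(2^{-n})\|=\|\pi_n\|_{C(X)}\leq\exp(2^{-n}\zeta)$ multiplies across any dyadic decomposition to give $\|Q(t)\|\leq e^{\zeta t}$ on $\mathbb{D}$; the $\gamma$-$C_0$-lifting property of $X$ then upgrades $(Q(t))_{t\in\mathbb{D}}$ to an \SOT-continuous dyadic semigroup extending to a $C_0$-semigroup on $[0,\infty)$.

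\textbf{Reverse direction.} For the converse I would reconstruct the $^*$-homomorphism section via joint functional calculus. Given a lift $(Q(t))_{t\in\mathbb{D}}$ with the stated properties, the operators $Q_n:=Q(2^{-n})$ are normal, commute pairwise (being semigroup elements), satisfy $Q_{n+1}^2=Q_n$, and have $\sigma(Q_n)=X_n$. Let $\Dd:=\mathrm{C}^*(\{Q_n\}_{n\geq 0},I)\subseteq\BB(\Hh)$. Because $Q_n=Q_{n+1}^2$, $\Dd$ is the norm closure of the increasing union $\bigcup_{N\geq 0}\mathrm{C}^*(Q_N,I)$; the Gelfand spectrum of $\mathrm{C}^*(Q_N,I)$ is $\sigma(Q_N)=X_N$, and the inclusion $\mathrm{C}^*(Q_N,I)\hookrightarrow\mathrm{C}^*(Q_{N+1},I)$ induces the map $z\mapsto z^2$ between character spaces (any character sends $Q_N=Q_{N+1}^2$ to the square of its value on $Q_{N+1}$). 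Hence $\mathrm{Spec}(\Dd)=\varprojlim\{X_N,z\mapsto z^2\}=X$, and Gelfand duality yields a unital $^*$-monomorphism $\hat\tau\colon C(X)\to\Dd\subseteq\BB(\Hh)$ with $\hat\tau(\pi_n)=Q_n$. On each generator one has $\pi\hat\tau(\pi_n)=\pi Q_n=q(2^{-n})=\tau(\pi_n)$; since $\{\pi_n\}_{n\geq 0}\cup\{1\}$ separates points of $X$ (distinct sequences disagree in some coordinate), the $^*$-algebra it generates is dense in $C(X)$ by Stone--Weierstrass, and continuity of both $\pi\circ\hat\tau$ and $\tau$ forces $\pi\circ\hat\tau=\tau$. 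Thus $\tau$ splits, and the Weyl--von~Neumann--Berg theorem gives $\Gamma=\Theta$.

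\textbf{Main obstacle.} The delicate step is identifying $\mathrm{Spec}(\Dd)$ with the admissible inverse limit $X$ in the reverse direction; this relies crucially on the full spectral equality $\sigma(Q_n)=X_n$ (rather than mere containment), without which the character spaces would only inject into the $X_n$ and the inverse limit description could fail. The forward direction, by contrast, is routine once the $\gamma$-$C_0$-lifting property of Definition~\ref{admissible_D}(b) is invoked: this is precisely the hypothesis that carries all the SOT-continuity content, and the rest amounts to bookkeeping with $\pi_{n+1}^2=\pi_n$ and the Busby invariant formalism.
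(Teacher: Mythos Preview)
Your proof is correct and follows essentially the same approach as the paper. The only cosmetic difference is in the reverse direction: where the paper defines the section explicitly on the dense $^\ast$-subalgebra $\mathbf{A}=\{P\circ\pi_n\colon P\in\C[z,\bar z],\,n\in\N\}$ and checks it is isometric using $\sigma(Q(2^{-n}))=X_n$, you instead identify $\mathrm{Spec}(\Dd)$ with $\varprojlim X_n=X$ via the standard ``spectrum of an inductive limit is the projective limit of spectra'' principle---but this is the same computation in Gelfand-dual language, and both hinge on exactly the spectral equality you flag as the main obstacle.
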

\begin{proof}
The necessity follows almost directly from Definitions~\ref{admissible_D}(b) and \ref{C0_D}. Indeed, suppose that $\Gamma=\Theta$ is given by a~Busby invariant $\sigma\colon C(X)\to\QQ(\Hh)$, $\sigma=\pi\p^{-1}$, where $\p$ is the quotient map. Let $\rho\colon C(X)\to\BB(\Hh)$ be the right section, that is, $\p\rho=\mathrm{id}_{C(X)}$, and define $Q(2^{-n})=\rho(\pi_n)$ for $n=0,1,2,\ldots$ Next, let $(Q(t))_{t\in\mathbb{D}}\subset\BB(\Hh)$ be the dyadic extension. Notice that $\pi Q(2^{-n})=\pi\p^{-1}\p\rho(\pi_n)=\pi\p^{-1}(\pi_n)=\sigma(\pi_n)=q(2^{-n})$ for each $n=0,1,2,\ldots$, and that the analogous relation holds true for all dyadic numbers. Indeed, if $\mathbb{D}\ni t=t_0+\sum_{i=1}^j 2^{-m_i}$, where $t_0, j\geq 0$ and $1\leq m_1<\ldots<m_j$ \vspace*{-1pt}are integers, then $q(t)=q(1)^{t_0}q(2^{-m_1})\cdot\ldots\cdot q(2^{-m_j})$. Since $q(2^{-k})=q(2^{-m_j})^{2^{m_j-k}}$ for every $k\leq m_j$, we have $\pi Q(t)=\sigma(\pi_{m_j}^N)=q(t)$, where $N=t_02^{m_j}+\sum_{i=1}^j 2^{m_j-m_i}$. Moreover,\vspace*{-1pt} as $Q(t)=\rho(\pi_{m_j}^N)$ and $\rho$ is an~isometry, we infer that $\n{Q(t)}\leq\exp\small(2^{-m_j}N\zeta\small)=\exp\small(t\zeta\small)$ with $\zeta$ as in Remark~\ref{rem_exp}. Therefore, since $X$ has the $\gamma$-$C_0$-lifting property, $(Q(t))_{t\in\mathbb{D}}$ is a~$C_0$-semigroup. Notice also that the spectrum of $Q(2^{-n})$ is the same as the one of $q(2^{-n})$, as $\rho$ is a~unital $^\ast$-monomorphism.

For the converse, let again $\sigma$ be the Busby invariant for $\Gamma$, $\p$ be the quotient map, and assume that the dyadic semigroup determined by the formula $q(2^{-n})=\sigma(\pi_n)$ ($n=0,1,2,\ldots$) admits a~lifting to a~$C_0$-semigroup $(Q(t))_{t\in\mathbb{D}}$. Write $X=\varprojlim X_n$ and let $\mathbf{A}\subset C(X)$ be the $^\ast$-subalgebra of functions which coincide with a~polynomial in variables $z$, $\oo{z}$ on some $X_n$, that is, $\mathbf{A}=\{P\circ\pi_n\colon P\in\C[z,\oo{z}],\,n\in\N\}$. Note that $\mathbf{A}$ is closed under addition and multiplication due to the fact that every polynomial acting on $X_n$ is also a~polynomial on $X_m$, for every $m>n$. Define $\rho$ on $\mathbf{A}$ by $\rho(f)=P(Q(2^{-n}),Q(2^{-n})^\ast)$ for $f=P\circ\pi_n$. Since addition, multiplication and involution in $\mathbf{A}$ reduce to the same operations on polynomials on some $X_n$, we see that $\rho\colon\mathbf{A}\to\BB(\Hh)$ is a~unital $^\ast$-homomorphism. Moreover, for any $f\in\mathbf{A}$ as above, $\|\rho(f)\|$ equals the supremum of $\vert P(z,\oo{z}\,)\vert$ over $z$ in the spectrum of $Q(2^{-n})$, which is the same as the spectrum of $q(2^{-n})$, and hence $\|\rho(f)\|=\sup\{\vert P(z,\oo{z}\,)\colon z\in X_n\}=\|f\|_\infty$, the supremum norm in $C(X)$. By the Stone--Weierstrass theorem, $\mathbf{A}$ is dense in $C(X)$, thus there exists a~unique continuous extension of $\rho$ to a~$^\ast$-homomorphism. Since for every $f\in\mathbf{A}$, $f=P\circ\pi_n$, we have
\begin{equation*}
\begin{split}
    \p\rho(f) &=\p\big[P(Q(2^{-n}),Q(2^{-n})^\ast)\big]=P\big[\p(Q(2^{-n})),\p(Q(2^{-n}))^\ast\big]\\
    &=P\big[\p\rho(\pi_n),\oo{\p(\rho(\pi_n)}\big]=P(\pi_n,\oo{\pi_n})=f,
\end{split}
\end{equation*}
we conclude that the obtained extension is a~right section for $\Gamma$. Hence, $\Gamma=\Theta$.
\end{proof}

Given a \cs-subalgebra of $\QQ(\Hh)$ generated by some elements of an~operator semigroup, our goal is to find a~lift in $\BB(\Hh)$ which preserves all the algebraic relations in the original semigroup (and possibly preserves \SOT-continuity). Such approach can be regarded as a~`semigroup' version of the usual lifting problem which is now quite well understood. By a~{\it lift} of a~\cs-subalgebra $\mathcal{A}\subseteq\QQ(\Hh)$ we mean any \cs-subalgebra $\mathcal{E}\subseteq\BB(\Hh)$ such that $\pi[\mathcal{E}]=\mathcal{A}$. The algebra $\mathrm{C}^\ast(\pi(S))\cong C(\mathbb{T})$, where $S$ is the~unilateral shift, is the prototypical example of a~commutative \cs-subalgebra of $\QQ(\Hh)$ with no abelian lift (see \cite[Prop.~12.4.3]{farah}). On the other hand, subalgebras which admit an~abelian lift are completely characterized with the aid of the Weyl--von~Neumann--Berg--Sikonia theorem.
\begin{theorem}[{see \cite[Cor.~12.4.5]{farah}}]
A separable commutative \cs-subalgebra of $\QQ(\Hh)$ has an~abelian (diagonalizable) lift if and only if it is included in a~commutative \cs-subalgebra of $\QQ(\Hh)$ of real rank zero.
\end{theorem}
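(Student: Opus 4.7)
For the ``only if'' direction, suppose $\mathcal{A}$ admits a~diagonalizable lift $\mathcal{E}\subseteq\BB(\Hh)$. After conjugating by a~suitable unitary, $\mathcal{E}$ is contained in the masa $\mathcal{D}\cong\ell^\infty(\N)$ of operators diagonal in a~fixed orthonormal basis of $\Hh$. The algebra $\mathcal{D}$ has real rank zero (its self-adjoint elements with finite spectrum are norm-dense), and by the standard theorem of Brown--Pedersen real rank zero passes to \cs-quotients; thus $\pi[\mathcal{D}]$ is a~commutative real rank zero \cs-subalgebra of $\QQ(\Hh)$ containing $\pi[\mathcal{E}]=\mathcal{A}$.

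For the converse, assume $\mathcal{A}\subseteq\mathcal{B}\subseteq\QQ(\Hh)$ with $\mathcal{B}$ commutative of real rank zero. A~standard separabilization argument produces a~separable commutative real rank zero subalgebra $\mathcal{B}_0\subseteq\mathcal{B}$ containing $\mathcal{A}$: starting from $\mathcal{A}$, one iteratively adjoins countably many projections from $\mathcal{B}$ needed to approximate, within tolerances $\e_k\downarrow 0$, the self-adjoint elements produced so far, and closes up after $\omega$ steps. By Gelfand duality $\mathcal{B}_0\cong C(X)$ with $X$ compact, metrizable and totally disconnected, so $\mathcal{B}_0$ is the closed linear span of a~countable Boolean algebra of projections $(p_n)_{n=1}^\infty$.

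The crux of the proof is then to lift $(p_n)_{n=1}^\infty$ simultaneously to a~pairwise commuting family of projections $(P_n)_{n=1}^\infty\subseteq\BB(\Hh)$ with $\pi(P_n)=p_n$. We proceed by induction: given such $P_1,\ldots,P_n$, choose any self-adjoint lift $R\in\BB(\Hh)$ of $p_{n+1}$; since $[R,P_j]\in\KK(\Hh)$ for each $j\leq n$, the off-diagonal part of $R$ with respect to the finite joint spectral decomposition $\Hh=\bigoplus_\alpha\Hh_\alpha$ induced by $\{P_1,\ldots,P_n\}$ is compact. Truncating it off yields a~compact perturbation $R'$ of $R$ which commutes with each $P_j$, and applying the step-function functional calculus $\chi_{[\frac12,\infty)}$ block-wise converts $R'$ into a~genuine projection $P_{n+1}$ still in the class $p_{n+1}$ and commuting with $P_1,\ldots,P_n$. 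Taking $\mathcal{E}=\mathrm{C}^\ast(\{P_n\colon n\in\N\})\subseteq\BB(\Hh)$, this algebra is commutative and generated by commuting projections, hence simultaneously diagonalizable in some orthonormal basis of $\Hh$; moreover $\pi[\mathcal{E}]=\mathcal{B}_0\supseteq\mathcal{A}$, so $\pi^{-1}(\mathcal{A})\cap\mathcal{E}$ is the desired diagonalizable lift of $\mathcal{A}$.

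The principal obstacle is the inductive construction of the commuting projections: applying the step-function functional calculus to $R'$ gives a~projection only because the essential spectrum of $R'$, being the spectrum of $p_{n+1}$, is contained in $\{0,1\}$, so the spectrum of $R'$ near $\frac12$ consists purely of isolated eigenvalues of finite multiplicity. This is the essential ingredient of the classical Berg--Sikonia simultaneous lifting method for commuting normals modulo the compacts, and it is precisely this ingredient that fails once one replaces ``commutative'' by ``semigroup'', which is why the present paper needs a~different machinery.
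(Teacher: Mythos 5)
The paper does not supply a proof of this statement; it is quoted verbatim from Farah's book (Cor.~12.4.5), so there is no in-paper argument to compare against. Your proposal is therefore assessed on its own terms.

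The ``only if'' direction and the reduction to a separable $\mathcal{B}_0\cong C(X)$ with $X$ totally disconnected (hence generated by a countable Boolean algebra of projections) are fine. The inductive lifting of a single projection $p_{n+1}$ to a projection commuting with $P_1,\ldots,P_n$ is also fine: truncating a self-adjoint lift $R$ to its block-diagonal part $R'$ changes it by a compact, and since $\sigma_{\mathrm{ess}}(R')=\sigma(p_{n+1})\subseteq\{0,1\}$, the spectral projection $\chi_{[1/2,\infty)}(R')$ differs from a continuous functional-calculus image of $R'$ by a finite-rank operator, so it still lifts $p_{n+1}$. All of that is correct.

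The genuine gap is the final inference: ``commutative and generated by commuting projections, hence simultaneously diagonalizable.'' This is false. A separable commutative \cs-algebra generated by countably many pairwise commuting projections need not be diagonalizable: e.g.\ on $L^2[0,1]$ the projections $P_n=M_{\chi_{E_n}}$, where $E_n$ is the set where the $n$-th binary digit is $1$, commute and generate a \cs-algebra with Cantor-set spectrum, yet there are \emph{no} joint eigenvectors (each joint eigenspace sits over a Lebesgue-null singleton), so no orthonormal basis diagonalizes $\mathrm{C}^\ast(\{P_n\})$. Your construction gives you freedom to make the same mistake, since you choose ``any self-adjoint lift $R$'' at each step: nothing prevents the joint spectral measure of the $P_n$ from being diffuse.

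The fix is cheap, and it actually makes the whole induction unnecessary. Fix one orthonormal basis of $\Hh$ and, for each $n$, invoke the Weyl--von~Neumann theorem to pick a self-adjoint lift $R_n$ of $p_n$ that is \emph{diagonal} in that basis; then $P_n=\chi_{[1/2,\infty)}(R_n)$ is a diagonal projection, $\pi(P_n)=p_n$ by the finite-rank spectral argument above, and the family $(P_n)$ is automatically commuting and simultaneously diagonal. (Your block-truncation step is then the identity.) Alternatively, one can argue via the BDF machinery already set up in the paper: $X$ zero-dimensional gives $\Ext(X)=0$, so the Busby invariant of $\mathcal{B}_0\hookrightarrow\QQ(\Hh)$ is unitarily equivalent to $\pi\sigma$ with $\sigma$ block-diagonal as in \eqref{sigma_neutral}, and conjugating $\sigma$ by that unitary produces a diagonalizable splitting directly. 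Either route closes the gap; without one of them, the proof as written only produces a commutative lift, not a diagonalizable one.
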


Not surprisingly, there are some similarities between the `usual' and the `semigroup' versions of the lifting problem. Recall that a~commutative \cs-algebra $C_0(X)$ has real rank zero if and only if $X$ is zero-dimensional, and hence every separable commutative \cs-subalgebra of $\QQ(\Hh)$ with zero-dimensional spectrum has an~abelian lift. Similarly, according to \cite[Thm.~1.15]{BDF}, for any zero-dimensional compact metric space $X$, we have $\Ext(X)=0$. This means that every normal $C_0$-semigroup in $\QQ(\Hh)$ which induces an~extension of a~zero-dimensional space (as in Proposition~\ref{P_spectrum}) admits a~semigroup lift, which may also be \SOT-continuous depending on whether Lemma~\ref{lifting_L}, and the results of Section~4, are applicable.
\begin{example}\label{ex_2k}
Let $Z=\{2k\pi\ii\colon k\in\Z\}$ and 
$$
X_n=\oo{\exp(2^{-n}Z)}=\{\exp\small(2^{-n+1}k\pi\ii\small)\colon k=0,1,\ldots,2^n-1\},
$$
which is the set of all roots of unity of degree $2^n$, for $n=0,1,2,\ldots$ Let also $f_n\colon X_{n+1}\to X_n$ be given by $f(z)=z^2$. Then $X=\varprojlim\{X_n,f_n\}_{n\geq 0}$ is homeomorphic to the Cantor set, hence $\Ext(X)=0$. Consequently, any normal $C_0$-semigroup in $\QQ(\Hh)$ whose infinitesimal generator has spectrum $Z$ admits a~semigroup lift to $\BB(\Hh)$ (see Propostion~\ref{P_spectrum}).
\end{example}

\begin{example}\label{ex_imaginary}
Let $Z=\ii\R$ be the imaginary axis, so that $X_n=\exp(2^{-n}Z)=S^1$ for every $n=0,1,2,\ldots$ Let again each $f_n\colon S^1\to S^1$ be given by $f_n(z)=z^2$. Then, the inverse limit 
$$
\Sigma_2=\varprojlim\{S^1,z^2\}
$$
is the~$2$-{\it adic solenoid}. It is known (see \cite{KS}) that $\Ext(\Sigma_2)=0$. This can be proved, for example, by using Milnor's exact sequence (quoted below in Theorem~\ref{milnor_thm}), which in this case has the form
$$
\begin{tikzcd}
0 \arrow[r] & \varprojlim^{(1)}\Ext(S^2) \arrow[r] & \Ext(\Sigma_2) \arrow[r] & \varprojlim \Z \arrow[r] & 0.
\end{tikzcd}
$$
By the well-known result \cite[Cor.~7.1]{BDF}, we have $\Ext(S^2)=0$, 
and since the connecting maps $\Z\to\Z$ in the quotient group are all given by $x\mapsto 2x$, we obviously have $\varprojlim\Z=0$. Therefore, $\Ext(\Sigma_2)$ is trivial and hence any normal $C_0$-semigroup in $\QQ(\Hh)$ whose generator has spectrum $\ii\R$ admits a~semigroup lift to $\BB(\Hh)$.
\end{example}

We will return to this example in Section~4 to show the lack of \SOT-continuity, despite of the fact that all extensions of $\Sigma_2$ are trivial (see Example~\ref{T_not_cont_E}). Roughly speaking, the reason is that in the~infinite toin coss, both outcomes occur infinitely often almost surely.



\section{Extensions generated by semigroups}
\subsection{An inverse limit associated with the spectrum}
As we have already announced, normal $C_0$-semigroups in the Calkin algebra naturally generate extensions of admissible compact metric spaces. Of course, to speak sensibly about $C_0$-semigroups we need to choose a~faithful $^\ast$-representation of $\QQ(\Hh)$ on a~Hilbert space $\mathbb{H}$. In the results of the present section, the choice of representation is arbitrary. 

Below, we can either assume that $(q(t))_{t\geq 0}$ is a~$C_0$-semigroup of normal elements of $\QQ(\Hh)$, or assume a~weaker condition that the~dyadic semigroup $(q(t))_{t\in\mathbb{D}}$ is a $C_0$-semigroup (i.e. it can be extended to a~$C_0$-semigroup $(T(t))_{t\geq 0}\subset\BB(\mathbb{H})$), however, we then need to assume that all $T(t)$ are normal operators. The reason is that we apply the spectral mapping theorem for normal $C_0$-semigroups (see \cite[Cor.~2.12]{EN}).

\begin{proposition}\label{P_spectrum}
Let $(q(t))_{t\geq 0}\subset\QQ(\Hh)$ be a $C_0$-semigroup of normal operators in the Calkin algebra. Let
$$
\Aa_0=\mathrm{C}^\ast\big(\{q(2^{-n})\colon n=\infty,0,1,2,\ldots\}\big)
$$

\vspace*{1mm}\noindent
be the \cs-subalgebra of $\QQ(\Hh)$ generated by the identity and all $q(2^{-n})$ for $n\in\N_0$, and let
$$
\mathcal{E}=\pi^{-1}(\Aa_0)
$$
be the \cs-subalgebra of $\BB(\Hh)$ generated by $\{q(2^{-n})\colon n=\infty,0,1,2,\ldots\}+\KK(\Hh)$.

\vspace*{1mm}
\begin{itemize}[leftmargin=20pt]
\setlength{\itemsep}{2pt}
    \item[{\rm (a)}] Let $A$ be the generator of $(q(t))_{t\geq 0}$ and define
    $$
    \Omega_n=\oo{\exp(2^{-n}\sigma(A))}\quad (n=0,1,2,\ldots)
    $$
Then, $\Aa_0$ is a~commutative \cs-algebra and its maximal ideal space $\Delta$ is homeomorphic to the projective limit of the inverse system $\{\Omega_n,p_n\}_{n\geq 0}$, where $p_n(z)=z^2$ for each $n=0,1,2,\ldots$
    
    \item[{\rm (b)}] The \cs-algebra $\mathcal{E}$ contains $\KK(\Hh)$ as an ideal and there is an~exact sequence
    $$
    \begin{tikzcd}
    0 \arrow[r] & \KK(\Hh) \arrow[r, "\iota"] & \mathcal{E} \arrow[r, "\theta"] & C(\Delta) \arrow[r] & 0,
    \end{tikzcd}
    $$
where $\theta(T)=\widehat{\pi(T)}$ and $\Aa_0\ni q\xmapsto[\phantom{xx}]{}\widehat{q}\in C(\Delta)$ is the Gelfand transform.
\end{itemize}
\end{proposition}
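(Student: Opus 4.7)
For part (a), I would first establish that $\Aa_0$ is commutative. The semigroup relation $q(s)q(t)=q(s+t)=q(t)q(s)$ forces the generators $q(2^{-n})$ to commute pairwise; since each is normal, Fuglede's theorem promotes this to commutation of $q(2^{-n})$ with every $q(2^{-m})^\ast$, so the $^\ast$-subalgebra generated by the $q(2^{-n})$ — and hence its closure $\Aa_0$ — is commutative. Writing $G\colon\Aa_0\to C(\Delta)$ for the Gelfand transform, define
$$
\Phi\colon\Delta\to\varprojlim\Omega_n,\qquad \Phi(\chi)=\big(\chi(q(2^{-n}))\big)_{n\geq 0}.
$$
The spectral mapping theorem for normal $C_0$-semigroups (\cite[Cor.~2.12]{EN}) yields $\sigma(q(2^{-n}))=\oo{\exp(2^{-n}\sigma(A))}=\Omega_n$, and since any character maps an element to a point of its spectrum, each coordinate $\chi(q(2^{-n}))$ lies in $\Omega_n$; the identity $q(2^{-(n+1)})^2=q(2^{-n})$ further gives the compatibility $p_n\big(\chi(q(2^{-(n+1)}))\big)=\chi(q(2^{-n}))$. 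Hence $\Phi$ is well-defined and continuous.

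Injectivity of $\Phi$ is immediate, since two characters that agree on all $q(2^{-n})$ agree on the dense $^\ast$-subalgebra they generate. For surjectivity, fix $(z_n)\in\varprojlim\Omega_n$; for each $N\geq 0$, the surjection $\Delta\to\Omega_N$, $\chi\mapsto\chi(q(2^{-N}))$, yields some $\chi_N\in\Delta$ with $\chi_N(q(2^{-N}))=z_N$. For $n\leq N$, the relations $q(2^{-n})=q(2^{-N})^{2^{N-n}}$ and $z_n=z_N^{2^{N-n}}$ combine to give $\chi_N(q(2^{-n}))=z_n$. Since $\Aa_0$ is separable (being generated by a countable set), $\Delta$ is compact metrizable; extract a weak$^\ast$-convergent subsequence $\chi_{N_k}\to\chi$ in $\Delta$. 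For each fixed $n$, $\chi_{N_k}(q(2^{-n}))=z_n$ once $N_k\geq n$, so $\chi(q(2^{-n}))=z_n$, i.e.\ $\Phi(\chi)=(z_n)$. A continuous bijection between compact Hausdorff spaces being a homeomorphism, part (a) is complete.

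For part (b), $\KK(\Hh)\subset\pi^{-1}(\Aa_0)=\mathcal{E}$ because $0\in\Aa_0$, and $\KK(\Hh)$ is an ideal of $\mathcal{E}$ since it is one of $\BB(\Hh)$. Set $\theta=G\circ\pi\restr{\mathcal{E}}$; this is a composition of $^\ast$-homomorphisms and visibly coincides with $T\mapsto\widehat{\pi(T)}$. The restriction $\pi\restr{\mathcal{E}}\colon\mathcal{E}\to\Aa_0$ is surjective by the very definition of $\mathcal{E}$, and $G$ is an isomorphism by (a), so $\theta$ is surjective; its kernel is $(\pi\restr{\mathcal{E}})^{-1}(\ker G)=\ker\pi\restr{\mathcal{E}}=\KK(\Hh)$, yielding the desired short exact sequence. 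The only genuinely nontrivial step in this plan is the surjectivity of $\Phi$ — everything else is essentially formal once the spectral mapping theorem for normal semigroups is invoked.
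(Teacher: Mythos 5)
Your proof is correct, and while the high-level structure mirrors the paper's, two of your key steps are genuinely different in execution.

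For commutativity of $\Aa_0$, you invoke Fuglede's theorem to pass from pairwise commutation of the $q(2^{-n})$ to commutation with adjoints. The paper instead goes through the functional calculus of the (normal) generator $A$: it writes each $q(t)$ as $\int_{\sigma(A)}e^{t\lambda}\,\dd\mathsf{E}^A(\lambda)$, so all $q(t)$ and $q(t)^\ast$ manifestly live in the same abelian algebra $L^\infty(\mathsf{E}^A)$. Both routes are legitimate; yours is perhaps more economical since it does not require normality of the generator, only of the semigroup elements.

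For the homeomorphism $\Delta\cong\varprojlim\Omega_n$, the paper identifies $\Delta$ with the joint spectrum $\sigma_{\Aa_0}(q(2^{-n})\colon n\geq 0)\subset\C^\infty$ via Rickart's theorem, and then proves by hand that this joint spectrum equals $\varprojlim\Omega_n$: one inclusion is obtained by analyzing when the positive series $q(\boldsymbol\lambda)=\sum_n 2^{-n}\|\lambda_n-q(2^{-n})\|^{-2}(\lambda_n-q(2^{-n}))^\ast(\lambda_n-q(2^{-n}))$ fails to be invertible, and the reverse by exhibiting, for each $\boldsymbol\lambda\in\varprojlim\Omega_n$, nonempty intersections $\bigcap_{j\leq n}W_j$ of explicit sublevel sets in $\sigma(A)$, so that the joint spectrum is dense (hence all) of $\varprojlim\Omega_n$ by compactness. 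You bypass the joint-spectrum machinery entirely: you quote the spectral mapping theorem to get $\sigma_{\Aa_0}(q(2^{-n}))=\Omega_n$ (using spectral permanence implicitly), observe compatibility from $q(2^{-(n+1)})^2=q(2^{-n})$, and then get surjectivity of $\Phi$ by a diagonal-type compactness argument: for each $N$ choose a character $\chi_N$ hitting $z_N$ at level $N$, note multiplicativity forces $\chi_N(q(2^{-n}))=z_n$ for $n\leq N$, and pass to a weak$^\ast$ cluster point. This is cleaner and avoids the ad hoc estimates, though both arguments ultimately lean on the same two ingredients — the spectral mapping theorem for normal $C_0$-semigroups and compactness. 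Your part (b) is the same as the paper's. The one thing worth flagging is that you should be explicit that $\sigma_{\Aa_0}(q(2^{-n}))$ agrees with $\sigma_{\QQ(\Hh)}(q(2^{-n}))$ (spectral permanence in \cs-algebras), since the spectral mapping theorem a priori computes the latter.
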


\begin{proof}
(a) First, observe that since there exists $\zeta<\infty$ such that $\mathrm{Re}\,\lambda\leq\zeta$ for each $\lambda\in\sigma(A)$, all the sets $\Omega_n$ are compact subsets of $\C$. Moreover, $A$ is normal and if $\mathsf{E}^A$ stands for the spectral decomposition of $A$, then each $q(t)$ can be calculated via functional calculus in $L_\infty(\mathsf{E}^A)$ by 
$$
q(t)=\int_{\sigma(A)}e^{t\lambda}\,\dd \mathsf{E}^A(\lambda)\qquad (t\geq 0),
$$
as $\vert e^{t\lambda}\vert\leq e^{t\zeta}$ and hence the function under the integral is bounded. Plainly, $q(s), q(t), q(t)^\ast$ commute for all $s,t\geq 0$, thus $\Aa_0$ is commutative.

The joint spectrum of the set $\{q(2^{-n})\colon n=0,1,2,\ldots\}$ is a~compact subset of $\C^\infty$ defined by
$$
\sigma_{\Aa_0}\big(q(2^{-n})\colon n=0,1,2,\ldots\big)=\big\{(\p(q(2^{-n})))_{n=0}^\infty\colon \p\in\Delta\big\}
$$
and the map 
$$
\Delta\ni\p\xmapsto[\phantom{xxx}]{}(\p(q(2^{-n})))_{n=0}^\infty
$$
is a homeomorphism between $\Delta$ and $\sigma_{\Aa_0}(q(2^{-n})\colon n=0,1,2,\ldots)$ (see \cite[Cor.~3.1.13]{rickart}). On the other hand, a~sequence $\boldsymbol{\lambda}=(\lambda)_{n=1}^\infty\in\C^\infty$ belongs to $\sigma_{\Aa_0}(q(2^{-n})\colon n=0,1,2,\ldots)$ if and only if
\begin{equation}\label{q_lambda}
q(\boldsymbol{\lambda})\coloneqq \sum_{n=0}^\infty 2^{-n}\frac{(\lambda_n I-q(2^{-n}))^\ast(\lambda_n I-q(2^{-n}))}{\n{\lambda_n I-q(2^{-n})}^2}
\end{equation}
is not invertible in $\QQ(\Hh)$. Indeed, as each summand is a~positive operator, we infer that for every linear multiplicative functional $\p\in\Delta$ we have $\p(q(\boldsymbol{\lambda}))=0$ if and only if $\p(q(2^{-n}))=\lambda_n$ for each $n=0,1,2,\ldots$ Hence, if $q(\boldsymbol{\lambda})$ is not invertible we pick $\p\in\Delta$ so that $\p(q(\boldsymbol{\lambda}))=0$ to see that $\boldsymbol{\lambda}$ belongs to the joint spectrum. Conversely, if $q(\boldsymbol{\lambda})$ is invertible, then we have $\p(q(\boldsymbol{\lambda}))\neq 0$ for every $\p\in\Delta$, thus $\boldsymbol{\lambda}$ is not in the joint spectrum.

Fix any $\boldsymbol{\lambda}=(\lambda_n)_{n=0}^\infty\in\C^\infty$. The operator $(\lambda_n I-q(2^{-n}))^\ast (\lambda_n I-q(2^{-n}))$ corresponds via functional calculus to the map $\phi_n\in L_\infty(\mathsf{E}^A)$ given by
$$
\phi_n(z)=\vert\lambda_n-\exp(2^{-n}z)\vert^2.
$$
For every $z\in\sigma(A)$, we have $\mathrm{Re}\, z\leq\zeta$ and hence
$$
\n{\phi_n}_\infty\leq \big(\abs{\lambda_n}+\exp(2^{-n}\zeta)\big)^2
$$
which implies that each denominator in formula \eqref{q_lambda} is majorized by a~constant and cannot become arbitrarily large after applying functional calculus and varying $z$ over $\sigma(A)$. Hence, $q(\boldsymbol{\lambda})$ is noninvertible if and only if $0$ lies in the closure of the range of the map
$$
\sigma(A)\ni z\xmapsto[\phantom{xxx}]{}\sum_{n=0}^\infty 2^{-n}\frac{\phi_n(z)}{\n{\phi_n}_\infty},
$$
which implies that each $\lambda_n$ must belong to the closure of $\exp(2^{-n}\sigma(A))$ which is denoted by $\Omega_n$. Moreover, for any $n=0,1,2,\ldots$ we can pick $z\in\sigma(A)$ so that both $\phi_n(z)$ and $\phi_{n+1}(z)$ are arbitrarily close to zero. Since $\exp(2^{-n-1}z)^2=\exp(2^{-n}z)$, we infer that for $q(\boldsymbol{\lambda})$ being noninvertible we also must have $\lambda_{n+1}^2=\lambda_n$ ($n=0,1,\ldots$). This means that every element of the joint spectrum belongs to the inverse limit $\varprojlim\Omega_n$. 


Conversely, fix any $\boldsymbol{\lambda}\in\varprojlim\Omega_n$ and any $\e>0$. Take $C>1$ such that $\vert \lambda_n+\exp(2^{-n}z)\vert\leq C$ for all $n\in\N_0$ and $z\in\sigma(A)$, and define
$$
W_n=\big\{z\in\sigma(A)\colon \vert \lambda_n-\exp(2^{-n}z)\vert\leq C^{-n}\e\big\}\quad (n=0,1,2,\ldots).
$$
Observe that for $z\in W_n$, we have
\begin{equation*}
    \begin{split}
        \vert \lambda_{n-1}-\exp(2^{-n+1}z)\vert &=\vert\lambda_n^2-\exp(2\cdot 2^{-n}z)\vert\\
        &=\vert \lambda_n-\exp(2^{-n}z)\vert\cdot\vert \lambda_n+\exp(2^{-n}z)\vert\leq C^{-n+1}\e
    \end{split}
\end{equation*}
which shows that $z\in W_{n-1}$ and, similarly, $z\in W_{n-2},\ldots,W_0$. Therefore, $\bigcap_{j=0}^n W_j\neq\varnothing$. This means that the identity map
$$
\mathrm{id}\colon \sigma_{\Aa_0}(q(2^{-n})\colon n=0,1,\ldots)\xrightarrow[\phantom{xxx}]{}\,\varprojlim\Omega_n
$$
has dense range. Thus, it is an~onto homeomorphism, as both topologies are the product topology which is compact and Hausdorff. Consequently, $\Delta$ is homeomorphic to $\varprojlim\Omega_n$.

\vspace*{2mm}\noindent
(b) Of course, $\KK(\Hh)$ forms an ideal in $\mathcal{E}$. For every $T\in\mathcal{E}$, we have $\pi(T)\in\Aa_0$ and each element in $\Aa_0$ is of this form. Hence, the formula $\theta(T)= \skew{4}\widehat{\textrm{\emph{\rule{0ex}{1.3ex}\smash{$\pi(T)$}}}}$ yields a~$^\ast$-homomorphism onto $C(\Delta)$. Obviously, $T\in\mathrm{ker}\,\theta$ if and only if $\pi(T)=0$, i.e. $T\in\KK(\Hh)$.
\end{proof}

Having established the fact that every normal $C_0$-semigroup $(q(t))_{t\geq 0}$ in $\QQ(\Hh)$ generates an~extension of $C(\Delta)$ by $\KK(\Hh)$, with $\Delta$ depending only on the spectrum of the generator of $(q(t))_{t\geq 0}$, we now describe the strategy of finding conditions which would guarantee that the resulting extension is the trivial one. To this end, we need to recall some more facts from the BDF theory. 

Given two compact metric spaces $X$ and $Y$, and a~continuous map $f\colon X\to Y$, there is an~induced map $f_\ast\colon\Ext(X)\to\Ext(Y)$ defined as
$$
f_\ast(\tau)(g)=\tau(g\circ f)\oplus \sigma(g)\quad (g\in C(Y)),
$$
where $\sigma$ is any $^\ast$-monomorphism corresponding to the trivial extension of $C(Y)$. We add the second direct summand in order to guarantee that the resulting map $f_\ast(\tau)$ is injective. One can verify that $(fg)_\ast=f_\ast g_\ast$ whenever these compositions make sense.

Recall that for any compact metric space $X$, the {\it cone} $CX$ over $X$ is obtained from $X\times I$ by collapsing $X\times\{0\}$ to a~single point, where $I=[0,1]$. The {\it suspension} $SX$ is obtained from $X\times I$ by collapsing $X\times\{0\}$ and $X\times\{1\}$ to two distinct points.

The extension functor is defined for ranks $q\leq 1$ by $\Ext_q(X)=\Ext(S^{1-q}X)$. It was shown in \cite[\S 6]{BDF} that, analogously to Bott's periodicity in $K$-theory, there exist isomorphisms
$$
\mathrm{Per}_\ast\colon \Ext_{q-2}(X)\xrightarrow[\phantom{xx}]{}\Ext_q(X)\quad (r\leq 1).
$$
This allows us to extend the definition of $\Ext$ to all integer dimensions:
$$
\Ext_q(X)=\left\{\begin{array}{ll}
\Ext(X) & \mbox{if }q\mbox{ is odd},\\
\Ext(SX) & \mbox{if }q\mbox{ is even}.
\end{array}\right.
$$
Since every continuous map $f\colon X\to Y$ naturally induces a~map $Sf\colon SX\to SY$ by appropriately quotienting $f\times\mathrm{id}\colon X\times I\to Y\times I$, there is an~induced homomorphism $f_\ast\colon \Ext_q(X)\to\Ext_q(Y)$, for any $q\in\Z$. We also define extensions of compact pairs by the obvious formula $\Ext_q(X,A)=\Ext_q(X/A)$. Therefore, for any admissible map $f\colon (X,A)\to (Y,B)$ between compact pairs (i.e. $f$ is continuous and $f(A)\subseteq B$), there is an~induced homomorphism $f_\ast\colon \Ext_q(X,A)\to\Ext_q(Y,B)$.

Suppose $\{X_n,p_n\}_{n=0}^\infty$ is an~inverse system of compact metric spaces. Let $X=\varprojlim X_n$ and $q_n\colon X\to X_n$ stand for the coordinate maps, for $n\in\N_0$, so that $p_nq_{n+1}=q_n$. Hence, we have another inverse system of groups $\{\Ext(X_n),p_{n\ast}\}_{n=0}^\infty$. Since $p_{n\ast}q_{(n+1)\ast}=q_{n\ast}$, we can define an~{\it induced map} 
$$
P\colon \Ext(X)\to\varprojlim\Ext(X_n),\quad P(\tau)=(q_{n\ast}\tau)_{n=0}^\infty. 
$$
By \cite[Thm.~8.4]{BDFu}, the induced map is always surjective, but in general not injective. However, Milnor \cite{milnor} showed that for any homology theory satisfying the Steenrod axioms, except the dimension axiom (see \cite{ES}), one can build an~exact sequence which measures the lack of continuity of the $\Ext$-functor with respect to inverse limit (see \cite[\S 5]{Dou}).
\begin{theorem}[see {\cite[Thm.~4]{milnor}} and {\cite[Cor.~7.4]{BDF}}]\label{milnor_thm}
For any inverse system $\{X_n\}$ of compact metric spaces, and any $k\in\Z$, there exists an~exact sequence
\begin{equation*}
\begin{tikzcd}
0 \arrow[r] & \varprojlim{}^{(1)}\Ext_{k+1}(X_n) \arrow[r] & \Ext_k(\varprojlim X_n) \arrow[r, "P"] & \varprojlim\Ext_k(X_n) \arrow[r] & 0
\end{tikzcd}
\end{equation*}
where $\varprojlim{}^{(1)}$ is the first derived functor of inverse limit.
\end{theorem}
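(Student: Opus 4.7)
The plan is to follow Milnor's original telescope argument, adapted to the functor $\Ext$, which by earlier BDF results constitutes a Steenrod-type homology theory on compact metric pairs (satisfying all Eilenberg--Steenrod axioms apart from dimension, together with a cluster continuity axiom). The strategy is to realise $\varprojlim X_n$ as the ``end'' of a geometric mapping telescope of the inverse system, compute $\Ext_\ast$ via a Mayer--Vietoris decomposition of that telescope, and recognise the resulting algebra as $\varprojlim$ and $\varprojlim^{(1)}$.

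Concretely, I would glue the mapping cylinders of the bonding maps $p_n \colon X_{n+1}\to X_n$ end to end to form a locally compact telescope $W$, and pass to its one-point compactification $W^+$. This space admits a decomposition $W^+ = A\cup B$ into two pieces, each contractible at infinity, whose intersection is a cluster $\bigsqcup_n X_n \cup \{\ast\}$ of copies of the $X_n$ converging to a single basepoint, and whose union encodes $\varprojlim X_n$ up to $\Ext$-theoretic equivalence. Applying $\Ext_\ast$ produces a long exact sequence in which, by the cluster axiom, the group $\Ext_k$ of the intersection is identified with $\prod_n \Ext_k(X_n)$, and the connecting endomorphism becomes the algebraically explicit map
\[
(\alpha_n)_{n\geq 0}\;\longmapsto\;(\alpha_n - p_{n\ast}\alpha_{n+1})_{n\geq 0}.
\]
By the very definitions of the two functors, the kernel of this map is $\varprojlim \Ext_k(X_n)$ and its cokernel is $\varprojlim{}^{(1)} \Ext_k(X_n)$. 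Splicing this identification into the long exact sequence across dimensions $k$ and $k+1$ produces precisely the desired four-term exact sequence, with $P$ equal to the induced map described immediately before the theorem; surjectivity of $P$ is already recorded as \cite[Thm.~8.4]{BDFu}.

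The main obstacle is verifying that the telescope decomposition has the right homological behaviour for $\Ext$: one must check that the ``cluster'' half of the Mayer--Vietoris really computes $\prod_n \Ext_k(X_n)$ with the correct functoriality (so that the connecting map is genuinely ``identity minus shift''), and that the ``union'' half genuinely computes $\Ext_k(\varprojlim X_n)$ rather than that of some homotopical surrogate. Both points are delicate because $\Ext$ is not homotopy invariant in the naive CW sense, but the Steenrod--Sitnikov framework developed in \cite{BDF, BDFu} supplies exactly what is needed; once these axiomatic inputs are in place, the remaining argument is purely formal algebra of inverse limits.
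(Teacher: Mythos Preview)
The paper does not prove this theorem at all: it is stated with the attribution ``see \cite[Thm.~4]{milnor} and \cite[Cor.~7.4]{BDF}'' and used as a black box, so there is no in-paper proof to compare against. Your outline is the standard Milnor telescope argument and is exactly what those references do; in particular \cite[Cor.~7.4]{BDF} records that $\Ext_\ast$ satisfies the Steenrod axioms (including the strong wedge/cluster axiom) needed to feed into \cite[Thm.~4]{milnor}, so your identification of the two delicate points and their resolution is accurate.
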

Applying Milnor's theorem to the inverse system $\{\Omega_n,p_n\}_{n\geq 0}$ produced by Proposition~\ref{P_spectrum} (and $k=1$), we arrive at the functor $\varprojlim{}^{(1)}\Ext(S\Omega_n)$, at \vspace*{-1pt}which we shall have a~closer look in Section~5. We provide therein geometric conditions on $\sigma(A)$ which imply that the corresponding connecting maps are surjective. Those conditions are later applied in Section~6, where we verify that in some situations the first derived functor vanishes.

Our next step at this point is to decide when the element of $\Ext(\Omega)$ produced via Proposition~\ref{P_spectrum} actually belongs to the kernel of $P$.

\subsection{Fredholm index map and the induced homomorphism}

Henceforth, we will be using the notation introduced in Proposition~\ref{P_spectrum} without explanation. In particular, we identify the extension given by $(\mathcal{E},\theta)$ with an~element of $\Ext(\Omega)$. We are going to show that the classical Brown--Douglas--Fillmore condition imposed on each $q(t)$ implies that the resulting extension corresponds canonically to the zero element of the group $\varprojlim\Ext(\Omega_n)$.

Recall that a crucial fact in the theory of essentially normal operators is that for any $X$ being a~compact subset of $\C$, the group $\Ext(X)$ can be nicely described with the aid of the Fredholm index map. In $C(X)$ consider the relation of homotopy equivalence and let $\GG_0(C(X))$ be the~equivalence class of the constant one function. By $\pi^1(X)$ we denote the group $\GG(C(X))/\GG_0(C(X))$ of homotopy classes of invertible functions. 
\begin{theorem}[{\cite[Thm.~10.5]{BDFu}; see also \cite[Thm.~IX.7.2]{davidson}}]\label{T_index}
For any compact set $X\subset\C$, there is a~well-defined map 
$$
\gamma\colon\Ext(X)\xrightarrow[\phantom{xx}]{}\Hom(\pi^1(X),\Z),\quad \gamma[\tau]([f])=\Find\,\tau(f) 
$$
which is a group isomorphism.
\end{theorem}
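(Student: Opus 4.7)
The proof naturally splits into three tasks: well-definedness, the homomorphism property, and bijectivity. For well-definedness I would start by noting that if $f\in\GG(C(X))$, then $\tau(f)\in\QQ(\Hh)$ is invertible (as $\tau$ is a unital $^\ast$-monomorphism), so by Atkinson's theorem any lift of $\tau(f)$ is Fredholm and $\Find\,\tau(f)$ is unambiguously defined. Homotopy invariance is then immediate: a path in $\GG(C(X))$ from $f_0$ to $f_1$ pushes forward to a norm-continuous path of invertibles in $\QQ(\Hh)$, which can be lifted (via a partition-of-unity/local-cross-section argument for $\pi$ restricted to invertibles) to a norm-continuous path of Fredholm operators, and $\Find$ is locally constant. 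Independence of the representative of $[\tau]$ follows because conjugating by $\pi(U)$ for a unitary $U$ does not change the Fredholm index of $\tau(f)$; hence $\gamma$ is a well-defined map $\Ext(X)\to\Hom(\pi^1(X),\Z)$.

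The two homomorphism properties are quick. Additivity in $[f]$ follows from $\tau(fg)=\tau(f)\tau(g)$ and the logarithmic law for Fredholm index. Additivity in $[\tau]$ follows from $(\tau_1\oplus\tau_2)(f)=\tau_1(f)\oplus\tau_2(f)$ combined with additivity of index under direct sums; in particular, any $\Theta$-summand contributes $0$, which is consistent with the trivial extension giving the zero homomorphism (for the neutral $\sigma$ from \eqref{sigma_neutral}, each $\sigma(f)$ has a normal, hence index-zero, lift).

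For bijectivity, surjectivity I would handle by a direct construction. For a compact $X\subset\C$, $\pi^1(X)$ is free abelian on the bounded components $\{U_i\}$ of $\C\setminus X$ (classical winding-number description of homotopy classes of invertibles in $C(X)$). Fix base points $\lambda_i\in U_i$; given $\phi\in\Hom(\pi^1(X),\Z)$, build an essentially normal operator $T$ with $\sigma_{\mathrm{ess}}(T)=X$ and $\Find(\lambda_i-T)=\phi([z-\lambda_i])$ as a direct sum of a normal operator with spectrum $X$ together with, for each $i$, $|\phi([z-\lambda_i])|$ copies of a weighted unilateral (or backward) shift whose essential spectrum is pinched to $X$. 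The Busby invariant $\tau$ of $\mathrm{C}^\ast(\pi(T),1)$ then satisfies $\gamma[\tau]=\phi$.

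Injectivity is the genuinely hard step and is essentially the content of the BDF theorem. Suppose $\gamma[\tau]=0$, i.e.\ $\Find\,\tau(f)=0$ for every $f\in\GG(C(X))$. Pick a lift $T\in\BB(\Hh)$ of $\tau(z)$; then $T$ is essentially normal with $\sigma_{\mathrm{ess}}(T)\subseteq X$, and the vanishing of the index for all invertible $f$ is equivalent, by the winding-number description, to $\Find(\lambda-T)=0$ for every $\lambda\in\C\setminus\sigma_{\mathrm{ess}}(T)$. By the BDF structure theorem (the compact-perturbation half of \cite{BDF}), there exists $K\in\KK(\Hh)$ with $T+K$ normal. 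The spectral theorem then provides a normal $^\ast$-homomorphism $\widetilde{\rho}\colon C(\sigma(T+K))\to\BB(\Hh)$ with $\pi\circ\widetilde{\rho}=\tau$ after restriction to $\tau(C(X))$, exhibiting $\tau$ as a trivial extension; Voiculescu's absorption theorem ensures this lift is equivalent to the canonical $\sigma$ of \eqref{sigma_neutral}. The principal obstacle throughout the argument is precisely this last step, since it forces one to invoke the full BDF/Voiculescu machinery; the earlier steps are bookkeeping around Atkinson's theorem and the multiplicativity of $\tau$.
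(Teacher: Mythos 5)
The paper does not prove this theorem; it is cited verbatim from \cite[Thm.~10.5]{BDFu} and \cite[Thm.~IX.7.2]{davidson}, so there is no internal argument to compare against. Your outline follows the standard route found in those references: well-definedness and the homomorphism property come from Atkinson's theorem, the logarithmic law for the Fredholm index, and additivity of index under direct sums; surjectivity comes from the winding-number description of $\pi^1(X)$ as the free abelian group on the bounded components of $\C\setminus X$ together with model operators (normal direct summed with shifts); and injectivity is a translation of the main BDF structure theorem into $\Ext$-language, which is where all the content sits.

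One step in the injectivity argument is glossed over and, as written, would fail. After producing $K\in\KK(\Hh)$ with $N=T+K$ normal, you cannot directly obtain a section $\rho\colon C(X)\to\BB(\Hh)$ from the functional calculus of $N$, because $\sigma(N)$ may be strictly larger than $X=\sigma_{\mathrm{ess}}(N)$: a normal operator can have (possibly infinitely many) isolated eigenvalues of finite multiplicity outside its essential spectrum. The functional calculus yields $\widetilde\rho\colon C(\sigma(N))\to\BB(\Hh)$, and this does not factor through the restriction $C(\sigma(N))\to C(X)$, since a function vanishing on $X$ but not on the extra eigenvalues has a nonzero (finite-rank) image under $\widetilde\rho$. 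The fix is standard but needs to be said: perturb $N$ by the further compact operator that moves each isolated eigenvalue $\mu\notin X$ to a nearest point of $X$ via its finite-rank spectral projection (the resulting sum is compact because these eigenvalues accumulate only on $X$), giving a normal $N'$ with $\pi(N')=\tau(z)$ and $\sigma(N')=X$; then $\rho(f)=f(N')$ is the desired section. Also, the closing appeal to Voiculescu mislocates what is needed: existence of a $^\ast$-homomorphic section already shows $[\tau]$ is trivial by definition, and what one needs in addition is that all trivial extensions are equivalent, which is the Weyl--von Neumann--Berg theorem (Voiculescu's theorem gives a strengthening of this but is not required here). With these two emendations your outline is correct and coincides with the argument in the cited sources.
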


\begin{proposition}\label{P_kernel}
Let $(q(t))_{t\geq 0}\subset\QQ(\Hh)$ be a $C_0$-semigroup of normal operators and let

\vspace*{1mm}
$$
P\colon \Ext(\Omega)\xrightarrow[\phantom{xx}]{}\varprojlim\Ext(\Omega_n),\quad \mbox{where }\,\Omega=\varprojlim\Omega_n\approx\Delta,
$$
be the induced surjective map. Then, $(\mathcal{E},\theta)\in\ker\,P$ if and only if 
$$
\Find(\lambda I-q(2^{-n}))=0\quad\mbox{for all }\,n\in\N_0,\,\lambda\not\in\Omega_n.
$$
\end{proposition}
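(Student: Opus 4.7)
The approach is to combine the definition of the induced map $P$ with the BDF index isomorphism from Theorem~\ref{T_index}. Let $\tau\colon C(\Omega)\to\QQ(\Hh)$ be the Busby invariant of $(\mathcal{E},\theta)$, and denote by $q_n\colon\Omega\to\Omega_n$ the coordinate projection. By definition, $(\mathcal{E},\theta)\in\ker P$ iff the pushforward $q_{n\ast}\tau\in\Ext(\Omega_n)$ is trivial for every $n\in\N_0$. Since each $\Omega_n\subset\C$ is compact, Theorem~\ref{T_index} identifies $\Ext(\Omega_n)$ with $\Hom(\pi^1(\Omega_n),\Z)$ via $[\sigma]\mapsto(\,[f]\mapsto\Find\sigma(f)\,)$. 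Therefore $q_{n\ast}\tau=0$ iff $\Find\bigl((q_{n\ast}\tau)(f)\bigr)=0$ for every invertible $f\in C(\Omega_n)$.

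The next step is to evaluate $q_{n\ast}\tau$ on the natural generators of $\pi^1(\Omega_n)$. Recall that for any compact $X\subset\C$, the group $\pi^1(X)$ is generated by the homotopy classes $[\lambda-z]$ with $\lambda\in\C\setminus X$; the classes attached to $\lambda$ in the unbounded component of $\C\setminus X$ are automatically trivial (one can slide $\lambda$ to infinity through invertibles), so they contribute vacuously. By the definition of the pushforward,
\[
(q_{n\ast}\tau)(\lambda-z)=\tau\bigl((\lambda-z)\circ q_n\bigr)\oplus\sigma_n(\lambda-z)=\bigl(\lambda\,1_{\QQ(\Hh)}-\tau(\pi_n)\bigr)\oplus\sigma_n(\lambda-z),
\]
where $\sigma_n$ is any Busby invariant for the trivial extension of $C(\Omega_n)$. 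Since $\tau(\pi_n)=q(2^{-n})$, the first summand lifts to $\lambda I-Q$ for any $Q\in\BB(\Hh)$ with $\pi Q=q(2^{-n})$, which is Fredholm of index $\Find(\lambda I-q(2^{-n}))$ precisely when $\lambda\notin\Omega_n=\sigma_{\QQ(\Hh)}(q(2^{-n}))$. Moreover, by the block form~\eqref{sigma_neutral} the operator $\sigma_n(\lambda-z)$ is a~diagonal direct sum of invertible scalars (once $\lambda\notin\Omega_n$), hence invertible in $\BB(\Hh)$ and of index zero. Consequently,
\[
\Find\bigl((q_{n\ast}\tau)(\lambda-z)\bigr)=\Find(\lambda I-q(2^{-n}))\qquad(\lambda\notin\Omega_n).
\]

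Assembling the pieces, $(\mathcal{E},\theta)\in\ker P$ iff for each $n\in\N_0$ the homomorphism $\pi^1(\Omega_n)\to\Z$ induced by $q_{n\ast}\tau$ vanishes, and by the generator description above this occurs iff $\Find(\lambda I-q(2^{-n}))=0$ for all $\lambda\notin\Omega_n$ (values of $\lambda$ in the unbounded component of $\C\setminus\Omega_n$ give index zero automatically). This yields the claimed equivalence.

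The routine part is verifying the definition of $q_{n\ast}\tau$ and the triviality of the direct summand $\sigma_n$; the main conceptual point is the identification of the generators of $\pi^1(\Omega_n)$ for compact planar sets, together with the observation that $\sigma(q(2^{-n}))=\Omega_n$ (this follows from Proposition~\ref{P_spectrum}(a) together with the spectral mapping theorem for normal $C_0$-semigroups), which ensures that $\lambda I-q(2^{-n})$ is invertible in $\QQ(\Hh)$ exactly for $\lambda$ in the complement of $\Omega_n$, where the Fredholm index is then well defined.
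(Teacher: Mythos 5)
Your proof is correct and follows essentially the same route as the paper: unwind $\ker P$ to triviality of each $q_{n\ast}\tau$, apply the BDF index isomorphism for compact planar sets, evaluate on the standard generators $[\lambda-z]$ of $\pi^1(\Omega_n)$, and use $\tau(\pi_n)=q(2^{-n})$ together with $\sigma(q(2^{-n}))=\Omega_n$. The paper spends a few more lines carefully unwinding the homeomorphism $\Delta\approx\Omega$ to justify $\widetilde\tau(q_n)=q(2^{-n})$, and handles the $\sigma_n$-summand by comparing matrix blocks under the unitary equivalence rather than by the index-additivity observation you use, but these are bookkeeping differences rather than a different argument.
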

\begin{proof}
First of all, notice that since each $q(t)$ is normal, the semigroup $(q(t))_{t\geq 0}$ satisfies the spectral mapping theorem, that is, $\sigma(q(2^{-n}))=\Omega_n$ (see \cite[Cor.~V.2.12]{EN}).

For any $n\in\N_0$, choose a $^\ast$-monomorphism $\sigma_n\colon C(\Omega_n)\to\BB(\Hh)$ determining the zero element of the group $\Ext(\Omega_n)$. The extension $(\mathcal{E},\theta)$ produced by $(q(t))_{t\geq 0}$ as in Proposition~\ref{P_spectrum} is generated by the $^\ast$-monomorphism $\tau\colon C(\Delta)\to\QQ(\Hh)$ given by $\tau(f)=\pi\theta^{-1}(f)$. Since $\Delta\approx \Omega$, there is the~corresponding $^\ast$-monomorphism $\w\tau\colon C(\Omega)\to\QQ(\Hh)$. By the definition of $P$, we have
$$
P(\w\tau)=(q_{n\ast}(\w\tau))_{n=0}^\infty\in \varprojlim\Ext(\Omega_n),
$$
where $q_n$ is the $n^{\mathrm{th}}$ coordinate map for the inverse system $\{\Omega_n,p_n\}_{n=0}^\infty$. Hence, $\w\tau\in\ker\, P$ if and only if for every $n\in\N_0$, $q_{n\ast}(\w\tau)$ determines the trivial extension of $C(\Omega_n)$ which is in turn equivalent to the condition that for each $n\in\N_0$ there is a~unitary operator $U_n\in\BB(\Hh)$ such that
\begin{equation}\label{equiv_uni}
    \pi(U_n)^\ast [\w\tau(g\circ q_n)\oplus \pi\sigma_n(g)]\pi(U_n)=\pi\sigma_n^{(2)}(g)\quad\mbox{for every }g\in C(\Omega_n),
\end{equation}
where $\pi\sigma_n^{(2)}$ stands for the amplification of $\pi\sigma_n$ to $\MM_2(\QQ(\Hh))$, which yields the zero element of $\Ext(\Omega_n)$ under the identification $\Hh\cong \Hh\oplus\Hh$. Comparing the two blocks in \eqref{equiv_uni} we can rewrite that condition as
$$
\pi(U_n)^\ast \w\tau(g\circ q_n)\pi(U_n)=\sigma_n(g)\quad\mbox{for every }g\in C(\Omega_n).
$$
Therefore, $\w\tau\in\ker\,P$ if and only if for every $n\in\N_0$, the $^\ast$-monomorphism
\begin{equation}\label{map_tau}
C(\Omega_n)\ni g\xmapsto[\phantom{xxx}]{}\w\tau(g\circ q_n)\in\QQ(\Hh)
\end{equation}
yields the trivial extension of $C(\Omega_n)$.

Now, for any $n\in\N_0$, consider the isomorphism given by Theorem~\ref{T_index}, that is,
$$
\gamma_n\colon \Ext(\Omega_n)\xrightarrow[\phantom{xx}]{} \Hom(\pi^1(\Omega_n),\Z),\quad \gamma_n[\mu]([f])=\Find\,\mu(f).
$$
Appealing to the fact that the group of homotopy classes $\pi^1(\Omega_n)$ is the free abelian group generated by $\{[z-\lambda_j]\}$, with one $\lambda_j$ in each bounded component of $\C\setminus\Omega_n$ (see \cite[Thm.~IX.7.1]{davidson}), we infer that \eqref{map_tau} yields the trivial extension if and only if
\begin{equation}\label{index_bfx}
\Find\, \w\tau(\mathbf{x}\xmapsto[\phantom{xx}]{} x_n-\lambda)=0
\end{equation}
for any $\lambda$ lying in a~bounded component of $\C\setminus\Omega_n$.

Recall that $\Delta$ and $\Omega$ are homeomorphic via the map $\varrho\colon \p\xmapsto[]{\phantom{xx}} (\p(q(2^{-n})))_{n=0}^\infty$ (see the proof of Proposition~\ref{P_spectrum}). Let $\psi\colon C(\Delta)\to C(\Omega)$ be the isomorphism given by $\psi(f)=f\circ \varrho^{-1}$, so that we have the corresponding extension $(\mathcal{E},\psi\theta)$ of $C(\Omega)$. Notice that for any $T\in\mathcal{E}$ we have:
\begin{equation*}
\begin{split}
    \psi\theta(T)=q_n\,\,\,\, & \Longleftrightarrow\,\,\,\, \psi(\widehat{\pi(T)})=q_n\\
     & \Longleftrightarrow\,\,\,\, \widehat{\pi(T)}(\varrho^{-1}(\mathbf{x}))=x_n\quad (\mathbf{x}\in\Omega)\\
     & \Longleftrightarrow\,\,\,\, \widehat{\pi(T)}(\p)=\p(q(2^{-n}))\quad (\p\in\Delta)\\
     & \Longleftrightarrow\,\,\,\,
     \widehat{\pi(T)}(\p)=\widehat{q(2^{-n})}(\p)\quad (\p\in\Delta)\\
     & \Longleftrightarrow\,\,\,\, \pi(T)=q(2^{-n}).
\end{split}    
\end{equation*}
Therefore, $\pi(\psi\theta)^{-1}(q_n)=q(2^{-n})$ for each $n\in\N_0$. Since $\w{\tau}=\pi(\psi\theta)^{-1}$, we can rewrite \eqref{index_bfx} as $\Find(\lambda I-q(2^{-n}))=0$ for any $\lambda$ from a~bounded component of $\C\setminus\Omega_n$. Of course, since the Fredholm index is continuous and $\lambda I-q(2^{-n})$ is invertible for $\lambda$ sufficiently large, we conclude that the last equality holds true for every $\lambda\in\C\setminus\Omega_n$.
\end{proof}

The condition occuring in Proposition~\ref{P_kernel} is exactly the same as the one in the famous Brown--Douglas--Fillmore theorem which says that an~operator $T\in\BB(\Hh)$ is of the form `normal plus compact' if and only if it is essentially normal (i.e. $T^\ast T-TT^\ast\in\KK(\Hh)$) and $\mathrm{index}(\lambda I-T)=0$ for every $\lambda$ outside of the essential spectrum of $T$ (i.e. $\lambda\in\C\setminus\sigma(\pi T)$).

\begin{corollary}\label{split_C}
Suppose that $(Q(t))_{t\geq 0}\subset\BB(\Hh)$ is a~collection of normal operators which satisfy the semigroup property modulo the compacts, that is,
$$
Q(s+t)-Q(s)Q(t)\in\KK(\Hh)\quad (s,t\geq 0).
$$
Assume also that $(q(t))_{t\geq 0}=(\pi Q(t))_{t\geq 0}\subset\QQ(\Hh)$ is a~$C_0$-semigroup under any faithful $^\ast$-representation of $\QQ(\Hh)$. Then, there exists a~dyadic semigroup $(T(t))_{t\in\mathbb{D}}\subset\BB(\Hh)$ such that $Q(t)-T(t)\in\KK(\Hh)$ for each $t\in\mathbb{D}$, whenever
\begin{equation}\label{bdf_simple_cor}
\varprojlim{}^{(1)}\Ext(S\Omega_n)=0.
\end{equation}
\end{corollary}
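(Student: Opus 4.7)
The plan is to feed the hypotheses through the machinery already assembled in the preceding sections, so the argument amounts to stringing together Propositions~\ref{P_spectrum}, \ref{P_kernel}, Milnor's exact sequence, and the (easier) direction of Lemma~\ref{lifting_L}. First, since $(q(t))_{t\geq 0}$ is a normal $C_0$-semigroup in $\QQ(\Hh)$, Proposition~\ref{P_spectrum} produces an extension $\Gamma=[\tau]\in\Ext(\Omega)$, where $\Omega=\varprojlim\{\Omega_n,p_n\}\approx\Delta$, and the induced dyadic semigroup (in the sense of Definition~\ref{ind_D}) coincides with $(q(t))_{t\in\mathbb{D}}$.

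Next, I would verify that $\Gamma\in\ker P$ by appealing to Proposition~\ref{P_kernel}. The key observation is that, by hypothesis, we already have a \emph{normal} operator lift $Q(2^{-n})\in\BB(\Hh)$ of $q(2^{-n})$; hence for any $\lambda\in\C$ the operator $\lambda I-Q(2^{-n})$ is normal, and a normal Fredholm operator has Fredholm index zero (since $\ker T=\ker T^{\ast}$ when $T$ is normal). Whenever $\lambda\notin\Omega_n=\sigma(q(2^{-n}))$, the element $\lambda I-q(2^{-n})$ is invertible in $\QQ(\Hh)$, hence $\lambda I-Q(2^{-n})$ is Fredholm with index $0$, which by definition of the Fredholm index map gives $\Find(\lambda I-q(2^{-n}))=0$. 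Proposition~\ref{P_kernel} then yields $\Gamma\in\ker P$.

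Now I would apply Milnor's exact sequence (Theorem~\ref{milnor_thm}) with $k=1$, noting that $\Ext_{2}(\Omega_n)=\Ext(S\Omega_n)$ by the parity convention for the $\Ext_q$-functors, and $\Ext_1(\cdot)=\Ext(\cdot)$:
\begin{equation*}
\begin{tikzcd}
0 \arrow[r] & \varprojlim{}^{(1)}\Ext(S\Omega_n) \arrow[r] & \Ext(\Omega) \arrow[r, "P"] & \varprojlim\Ext(\Omega_n) \arrow[r] & 0.
\end{tikzcd}
\end{equation*}
The hypothesis \eqref{bdf_simple_cor} forces the left-hand term to vanish, so $P$ is injective; combined with the previous paragraph this gives $\Gamma=\Theta$.

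Finally, I would extract the dyadic lift exactly as in the necessity part of the proof of Lemma~\ref{lifting_L}, which does \emph{not} require the $\gamma$-$C_0$-lifting property (the latter is needed only to upgrade the lift to a $C_0$-semigroup). Writing $\Theta=[\sigma]$ with $\sigma=\pi\varphi^{-1}$ coming from an exact sequence with a right section $\rho\colon C(\Omega)\to\BB(\Hh)$, set $T(2^{-n})=\rho(\pi_n)$ and extend to $(T(t))_{t\in\mathbb{D}}$ by the usual dyadic-semigroup recipe recalled in Section~2; then $\pi T(t)=q(t)=\pi Q(t)$ for every $t\in\mathbb{D}$, so $Q(t)-T(t)\in\KK(\Hh)$. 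The only point demanding a moment's care is that the lift $T(t)$ produced by $\rho$ corresponds to the \emph{same} dyadic semigroup as $(q(t))_{t\in\mathbb{D}}$: this is guaranteed because Proposition~\ref{P_spectrum}(a) identifies the coordinate functions $\pi_n\in C(\Omega)$ precisely with the images of $q(2^{-n})$ under the Gelfand transform, so $\tau(\pi_n)=q(2^{-n})$, and the uniqueness of the dyadic extension pins down the whole family. I do not anticipate a genuine obstacle here; the only subtlety is bookkeeping around $\Ext_q$-parity and the identification $\Delta\approx\Omega$, both of which are handled by the results quoted above.
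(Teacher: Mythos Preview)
Your proposal is correct and follows essentially the same route as the paper's proof: invoke Proposition~\ref{P_spectrum} to produce $\Gamma$, use the existence of normal lifts $Q(2^{-n})$ to verify the index condition of Proposition~\ref{P_kernel} so that $\Gamma\in\ker P$, apply Milnor's exact sequence with the hypothesis \eqref{bdf_simple_cor} to conclude $\Gamma=\Theta$, and then extract the dyadic lift via the construction in Lemma~\ref{lifting_L} sans its continuity assertions. Your additional remarks on why normal Fredholm operators have index zero and on the bookkeeping around $\tau(\pi_n)=q(2^{-n})$ are accurate elaborations of points the paper leaves implicit.
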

\begin{proof}
By the assumption, each $q(t)$ has a~normal lift and therefore the condition of Proposition~\ref{P_kernel} is satisfied. Hence, the extension of $\Omega$ produced by Proposition~\ref{P_spectrum} belongs to $\mathrm{ker}\,P$. A~glance at Milnor's exact sequence now shows that \eqref{bdf_simple_cor} implies that the resulting extension is trivial. It remains to apply the lifting lemma (Lemma~\ref{lifting_L} without the continuity assertions).
\end{proof}

\section{Continuity of lifting}
\subsection{Calkin representation and the zero element}


We will be now dealing with $C_0$-semigroups in $\QQ(\Hh)$ with respect to some concrete representations of $\QQ(\Hh)$ as subalgebras of $\BB(\mathbb{H})$ for a~Hilbert space $\mathbb{H}$. The original construction by Calkin \cite{calkin} goes as follows.

Let $\UU$ be any nonprincipal ultrafilter on $\N$ which we keep fixed for the rest of the paper. The limit along $\UU$ gives rise to a~positive functional $\mathrm{LIM}_\UU\in\ell_\infty^\ast$ which satisfies $\liminf_n a_n\leq\mathrm{LIM}_{n,\UU}a_n\leq\limsup_n a_n$ for every real valued $(a_n)\in\ell_\infty$. Let $\mathscr{W}$ be the collection of all weakly null sequences in $\Hh$ on which we consider an~equivalence relation $(x_n)\sim (y_n)$ defined by $\lim_n\n{x_n-y_n}=0$. We write $\mathscr{W}^{\usim}$ for the collection of all equivalence classes and $[(x_n)]_{\sim}$ for the equivalence class of a~sequence $(x_n)\in\mathscr{W}$. The formula
$$
\big\langle [(x_n)]_\sim, [(y_n)]_\sim\big\rangle=\mathop{\mathrm{LIM}}\limits_{n,\UU}(\ip{x_n}{y_n})_{n=1}^\infty
$$
gives rise to an inner product in $\mathscr{W}^\sim$. Notice that, by the Cauchy--Schwarz inequality,  the right-hand side does not depend on the choice of representatives. Also, if $(x_n)$ is not a~null sequence, then $\ip{[(x_n)]_\sim}{[(x_n)]_\sim}>0$. 

Let $\mathbb{H}$ be the completion of $\mathscr{W}^{\usim}$ under the norm $\n{[(x_n)]_\sim}=\mathrm{LIM}_{n,\UU}\n{x_n}$. Then, $\mathbb{H}$ is a~Hilbert space of density $\mathfrak{c}$ such that $\QQ(\Hh)$ can be faithfully represented on $\BB(\Hh)$ in the following way. For any $T\in\BB(\Hh)$, define $\Phi_0(T)$ to be the linear operator on $\mathscr{W}^{\usim}$ given by $\Phi_0(T)[(x_n)]_\sim=[(Tx_n)]_\sim$. Since $\n{\Phi_0T}\leq\n{T}$, there is a~unique extension of $\Phi_0(T)$ to a~bounded linear operator on $\mathbb{H}$ which we denote by $\Phi(T)$. Obviously, the map $T\mapsto\Phi(T)\in\BB(\mathbb{H})$ is a~$^\ast$-homomorphism and since $\mathrm{ker}\,\Phi=\KK(\Hh)$, we can define an~induced map
$$
\gamma\colon\QQ(\Hh)\to\BB(\mathbb{H}),\,\,\, \gamma (\pi(T))=\Phi(T)\quad (T\in\BB(\Hh)),
$$
which is a~faithful representation of $\QQ(\Hh)$. In what follows, we shall call $\gamma$ the {\it Calkin representation}. If we want to stress that it comes from the ultrafilter $\UU$, we call it the $\UU$-{\it Calkin representation}. A~thorough study of Calkin's representations was done by Reid in \cite{reid}, whereas other, more subtle representations, with type II$_\infty$ factor as the range, were constructed by Anderson and Bunce (see \cite{anderson} and \cite{AB}).

The next technical lemma refers to the well-known criterion for \SOT-continuity of an~operator semigroup $(T(t))_{t\geq 0}$ on a~Banach space $X$, which requires $T(t)$ to be uniformly bounded on an~interval $[0,\delta]$ and converge in norm pointwise on a~dense subset of $X$ (see \cite[Prop.~1.3]{EN}).

\begin{lemma}\label{cont1_L}
Let $X$ be an admissible compact metric space and let $\Gamma\in\Ext(X)$ induce a~dyadic semigoup $(q(t))_{t\in\mathbb{D}}\subset\QQ(\Hh)$. Let also $\gamma$ be a~faithful $^\ast$-representation of $\QQ(\Hh)$ on a~Hilbert space $\mathbb{H}$. Assume that for some dense set $D\subset\mathbb{H}$ we have 
$$
\lim_{t\in\mathbb{D},\,t\to 0}\gamma (q(t))\mathbf{x}=\mathbf{x}\quad\mbox{for each }\, \mathbf{x}\in D.
$$
Then $\sot_{t\in\mathbb{D},\, t\to 0}\gamma(q(t))=I_{\mathbb{H}}$ and there is a~$C_0$-semigroup $(T(t))_{t\geq 0}\subset\BB(\mathbb{H})$ which extends $(\gamma(q(t)))_{t\in\mathbb{D}}$.
\end{lemma}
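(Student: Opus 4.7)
The plan is in three stages: (i) establish $\sot_{\mathbb{D} \ni t \to 0}\gamma(q(t)) = I_{\mathbb{H}}$ from the pointwise hypothesis and a uniform norm bound; (ii) use the semigroup law on $\mathbb{D}$ together with (i) to produce, for each $t \in [0,\infty)$, a well-defined bounded operator $T(t)$ via a Cauchy argument; (iii) verify that $(T(t))_{t\geq 0}$ is a $C_0$-semigroup extending $(\gamma(q(t)))_{t \in \mathbb{D}}$.

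The uniform bound is already at hand: Remark~\ref{rem_exp} gives $\|q(2^{-n})\| \leq \exp(2^{-n}\zeta)$ with $\zeta = \sup_{z \in Z}\mathrm{Re}\,z$, and multiplicativity along $q(t) = q(1)^{t_0}q(2^{-m_1})\cdots q(2^{-m_j})$ yields $\|\gamma(q(t))\| \leq e^{t\zeta}$ for every $t \in \mathbb{D}$. Combined with the assumed pointwise convergence on $D$, a standard $\varepsilon/3$ argument promotes $\gamma(q(t))\mathbf{x} \to \mathbf{x}$ from $\mathbf{x} \in D$ to all $\mathbf{x} \in \mathbb{H}$, giving (i).

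For (ii), I observe that for any $\mathbf{y} \in \mathbb{H}$ and $t_n, t_m \in \mathbb{D}$ with $t_n \geq t_m$, the identity $q(t_n) = q(t_m)q(t_n - t_m)$ and (i) give
\begin{equation*}
\|\gamma(q(t_n))\mathbf{y} - \gamma(q(t_m))\mathbf{y}\| \leq \|\gamma(q(t_m))\| \cdot \|\gamma(q(t_n - t_m))\mathbf{y} - \mathbf{y}\| \longrightarrow 0
\end{equation*}
whenever $t_n, t_m$ both approach a common $t \in [0,\infty)$, so $T(t)\mathbf{y} \coloneqq \lim_n \gamma(q(t_n))\mathbf{y}$ is well-defined and bounded on $\mathbb{H}$ by $e^{t\zeta}$. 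The limit is routinely seen to be independent of the approximating dyadic sequence. For (iii), the semigroup law $T(s+t) = T(s)T(t)$ follows by passing to the limit in the corresponding dyadic identity (using the uniform bound to control the product); strong continuity at $0^+$ is inherited from (i) by density of $\mathbb{D}$ in $\R_+$; and strong continuity at any $s > 0$ then follows from the semigroup law via the estimate $\|T(s+h)\mathbf{y} - T(s)\mathbf{y}\| \leq \|T(s)\| \cdot \|T(h)\mathbf{y} - \mathbf{y}\|$ (together with the analogous left-sided version $\|T(s)\mathbf{y} - T(s-h)\mathbf{y}\| \leq \|T(s-h)\| \cdot \|T(h)\mathbf{y} - \mathbf{y}\|$ for $0 < h \leq s$).

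The only mildly delicate point I foresee is keeping the bound on $\|\gamma(q(t_m))\|$ uniform when the approximating sequences cluster near $t$; this is immediate from the exponential estimate once one notes that $t_m$ is eventually contained in any prescribed neighbourhood of $t$, and is subsumed by taking $C = \sup_{s \in \mathbb{D} \cap [0,T_0]}\|\gamma(q(s))\|$ on any fixed bounded interval. Nothing beyond routine bookkeeping for passing from a dyadically parametrised semigroup of controlled exponential growth to a $C_0$-semigroup on $[0,\infty)$ is needed.
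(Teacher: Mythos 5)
Your proposal is correct and follows essentially the same route as the paper: you use the norm bound $\|q(t)\|\leq e^{t\zeta}$ coming from Remark~\ref{rem_exp}, an $\varepsilon/3$-argument to upgrade pointwise convergence on $D$ to \SOT-convergence at $0$, and the semigroup estimate $\|\gamma(q(t))\mathbf{y}-\gamma(q(s))\mathbf{y}\|\leq\|q(s)\|\cdot\|\gamma(q(t-s))\mathbf{y}-\mathbf{y}\|$ to obtain uniform continuity of $t\mapsto\gamma(q(t))\mathbf{y}$ on bounded dyadic intervals, then define $T(t)$ as the unique continuous extension. The only cosmetic difference is that you spell out the verification of the semigroup law and two-sided strong continuity at $s>0$, steps the paper leaves implicit.
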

\begin{proof}
For any sequence $(t_n)_{n=1}^\infty\subset\mathbb{D}$ with $t_n\to 0$, consider the compact set $K=\{0\}\cup\{t_n\colon n\in\N\}$ and notice that the map $K\ni t\mapsto \gamma (q(t))$, where $\gamma(q(0))=I_{\mathbb{H}}$, is norm bounded and such that $K\ni t\mapsto \gamma(q(t))\mathbf{x}$ is continuous on $K$ for $\mathbf{x}\in D$. Fix $\mathbf{y}\in\mathbb{H}$, $\e>0$, and pick $\mathbf{x}\in D$, $\delta>0$ and $M>0$ such that $\n{\mathbf{x}-\mathbf{y}}<\e$, $\n{\gamma(q(t))\mathbf{x}-\mathbf{x}}<\e$ for $t\in K$ with $t<\delta$, and $\n{\gamma(q(t))}\leq M$ for every $t\in K$. Then, by triangle inequality, $\n{\gamma(q(t))\mathbf{y}-\mathbf{y}}<(M+2)\e$ for each $t\in K$, $t<\delta$, which shows that $\sot_{n\to\infty}\gamma(q(t_n))=I_{\mathbb{H}}$ and hence $\sot_{t\in\mathbb{D},\,t\to 0}\gamma(q(t))=I_{\mathbb{H}}$.

Fix any $\mathbf{x}\in\mathbb{H}$ and notice that in view of Remark~\ref{rem_exp}, for any $s,t\in\mathbb{D}$, $s<t$, we have
$$
\n{\gamma(q(t))\mathbf{x}-\gamma(q(s))\mathbf{x}}\leq\n{q(s)}\!\cdot\!\n{\gamma(q(t-s))\mathbf{x}-\mathbf{x}}\leq e^{s\zeta}\n{\gamma(q(t-s))\mathbf{x}-\mathbf{x}}.
$$
Hence, the map $\mathbb{D}\ni t\mapsto \gamma(q(t))\mathbf{x}$ is uniformly continuous on $[0,T]\cap\mathbb{D}$ for any $T>0$. We denote its continuous extension to the whole of $[0,\infty)$ by $T(\cdot)\mathbf{x}$, so that for every $\mathbf{x}\in\mathbb{H}$ we have a~continuous function $[0,\infty)\ni t\mapsto T(t)\mathbf{x}$. Since for any $t\in [0,\infty)\setminus\mathbb{D}$ and $\mathbf{x}\in\mathbb{H}$, we have $T(t)\mathbf{x}=\lim_{u\in\mathbb{D},\,u\to t}\gamma(q(u))\mathbf{x}$, the map $\mathbf{x}\mapsto T(t)\mathbf{x}$ is a~bounded linear operator on $\mathbb{H}$. Therefore, $(T(t))_{t\geq 0}$ is a~$C_0$-semigroup in $\BB(\mathbb{H})$ which satisfies $T(t)=\gamma(q(t))$ for $t\in\mathbb{D}$. 
\end{proof}

\begin{remark}\label{cont1_R}
Of course, the same proof as above works also in a~simpler situation, where $(Q(t))_{t\in\mathbb{D}}\subset\BB(\Hh)$ is a~dyadic semigroup satisfying:
\begin{itemize}[leftmargin=24pt]
\setlength{\itemsep}{3pt}
\item $\sup_{t\in [0,s]\cap\mathbb{D}}\n{Q(t)}<\infty$ for every $s>0$;

\item $\sot_{t\in\mathbb{D},\,t\to 0}Q(t)=I.$
\end{itemize}
Then there is a $C_0$-semigroup $(Q(t))_{t\geq 0}\subset\BB(\Hh)$ which extends the given one.
\end{remark}

\begin{theorem}\label{C0_i_iii_T}
Let $X$ be an admissible compact metric space, $X=\varprojlim X_n$, \vspace*{-1pt}where each $X_n$ is given by \eqref{adm_Z_D} and $Z\subset\C$ is a~closed set with 
$$
\eta\coloneqq\inf_{z\in Z}\mathrm{Re}\,z\leq\sup_{z\in Z}\mathrm{Re}\,z\eqqcolon\zeta<\infty.
$$
\begin{enumerate}[label={\rm (\roman*)}, leftmargin=24pt, labelindent=0pt, wide]
\setlength{\itemindent}{4pt}
\setlength{\itemsep}{9pt}
\item In order that $\Theta\in\Ext_{C_0,\gamma}(X)$ for the fixed $\UU\!$-Calkin representation, it is necessary that $-\infty<\eta\leq\zeta<\infty$ and $\lim_{n\to\infty}\abs{1-\pi_n(\xi)}=0$ for every $\xi\in X$.

\item In order that $\Theta\in\Ext_{C_0,\gamma}(X)$ for all $\mathscr{V}\!$-Calkin representations, with any $\mathscr{V}\in\beta\N\setminus\N$, it suffices that $\abs{1-\pi_n(\xi)}=O(2^{-n})$ uniformly for $\xi\in X$.

\item We have $\Theta\in\Ext_{C_0,\gamma}(X)$ for all $\mathscr{V}\!$-Calkin representations, with any $\mathscr{V}\in\beta\N\setminus\N$, if and only if for one (equivalently: for every) dense subset $\{\xi_k\colon k\in\N\}\subset X$, and any sequences $(L_n)_{n=1}^\infty$, $(S_n)_{n=1}^\infty$ of positive integers satisfying $\lim_{n\to\infty} 2^{-L_n}S_n=0$, we have
$$
\big(\vert 1-\pi_{L_n}(\xi_k)^{S_n}\vert\big)_{k=1}^\infty\yrightarrow[\,n\to\infty\,][2pt]{\,\,w\,\,}[0pt] 0\quad\mbox{in }\,\,\ell_\infty.
$$
\end{enumerate}
\end{theorem}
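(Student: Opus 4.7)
The key computation is that, given the Busby invariant $\sigma$ for $\Theta$ from \eqref{sigma_neutral} with dense set $\{\xi_k\}\subset X$, the induced dyadic semigroup reads $q(S\cdot 2^{-L})=\pi T_{L,S}$ with $T_{L,S}=\bigoplus_i\pi_L(\xi_i)^S I_i$, and, for $\mathbf{x}=[(x_k)]_\sim\in\mathscr{W}^{\usim}$, the orthogonal decomposition $x_k=\sum_i P_ix_k$ in $\Hh=\bigoplus\Hh_i$ yields
$$\n{\gamma_\mathscr{V}(q(S\cdot 2^{-L}))\mathbf{x}-\mathbf{x}}^2=\mathop{\mathrm{LIM}}\limits_{k,\mathscr{V}}\sum_{i=1}^\infty|1-\pi_L(\xi_i)^S|^2\n{P_ix_k}^2.$$
All three assertions are extracted by feeding this identity with suitable test vectors.

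\emph{Part \textrm{(i)}.} Given $\xi\in X$, choose by density a subsequence $\xi_{k_l}\to\xi$ and set $\mathbf{x}=[(e_{k_l})_l]_\sim$, with $e_{k_l}\in\Hh_{k_l}$ a unit vector. Continuity of $\pi_n$ on $X$ collapses the identity to $\n{\gamma_\mathscr{U}(q(2^{-n}))\mathbf{x}-\mathbf{x}}=|1-\pi_n(\xi)|$, so \SOT-continuity at $0$ forces $|1-\pi_n(\xi)|\to 0$. If $\eta=-\infty$ then $0\in X_n$ for every $n$ (as $\exp(2^{-n}z)\to 0$ when $\mathrm{Re}\,z\to-\infty$), so $(0)_{n\ge 0}\in X$, contradicting what was just proved.

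\emph{Part \textrm{(ii)}.} Since $\sigma$ is an isometric $\ast$-monomorphism, $\n{q(2^{-n})-1}_{\QQ(\Hh)}=\sup_{\xi\in X}|1-\pi_n(\xi)|=O(2^{-n})$; writing $t=S\cdot 2^{-L}$ and using $\n{(1+E)^S-1}\le e^{S\n{E}}-1$ gives $\n{q(t)-1}\le e^{Ct}-1\to 0$ as $t\to 0^+$ in $\mathbb{D}$. Faithful $\gamma_\mathscr{V}$ is isometric, so $\gamma_\mathscr{V}(q(t))\to I$ in operator norm for every $\mathscr{V}$, and Lemma~\ref{cont1_L} (or Remark~\ref{cont1_R}) supplies the $C_0$-extension.

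\emph{Part \textrm{(iii)}.} For $(\Rightarrow)$, testing with $\mathbf{x}=[(e_k)]_\sim$ ($e_k\in\Hh_k$ of unit norm) gives $\mathrm{LIM}_{k,\mathscr{V}}|1-\pi_{L_n}(\xi_k)^{S_n}|^2\to 0$, hence $\mathrm{LIM}_{k,\mathscr{V}}|1-\pi_{L_n}(\xi_k)^{S_n}|\to 0$ for every nonprincipal $\mathscr{V}$. Pointwise convergence at fixed $k$ comes from part (i) together with $\pi_n(\xi_k)^2=\pi_{n-1}(\xi_k)$: once $\pi_n(\xi_k)$ sits near $1$, the principal branch gives $\log\pi_n(\xi_k)=2^{-n}w_k$ for a fixed $w_k\in\C$, so $\pi_{L_n}(\xi_k)^{S_n}=\exp(2^{-L_n}S_n w_k)\to 1$ as $2^{-L_n}S_n\to 0$; together with the uniform bound $|1-\pi_{L_n}(\xi_k)^{S_n}|\le 1+e^\zeta$ valid for large $n$, this is exactly pointwise convergence on $\beta\N$, i.e. weak convergence in $\ell_\infty=C(\beta\N)$. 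For $(\Leftarrow)$, for each $\mathbf{x}=[(x_k)]_\sim$ define $\Phi\colon\ell_\infty\to\C$ by $\Phi(a)=\mathrm{LIM}_{k,\mathscr{V}}\sum_i a_i\n{P_ix_k}^2$; the estimate $|\Phi(a)|\le\n{a}_\infty\cdot\mathrm{LIM}_k\n{x_k}^2$ places $\Phi\in\ell_\infty^\ast$, and applying it to $a^{(n)}_i=|1-\pi_{L_n}(\xi_i)^{S_n}|^2$ (whose weak convergence to $0$ is equivalent, for bounded sequences in $C(\beta\N)$, to that of $|1-\pi_{L_n}(\xi_i)^{S_n}|$) yields $\n{\gamma_\mathscr{V}(q(t_n))\mathbf{x}-\mathbf{x}}^2\to 0$ on the dense subspace $\mathscr{W}^{\usim}$, after which Lemma~\ref{cont1_L} closes the argument. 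The dense-set independence is immediate from Remark~\ref{C0_correct_R}, since both conditions characterize the same intrinsic property of $\Theta$.

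The principal obstacle is the pointwise-convergence step in $(\Rightarrow)$: one must convert the rigid inverse-system relation $p_n(z)=z^2$ into a well-defined exponential parameter $w_k$ attached to each $\xi_k$, and this is precisely where the admissible structure of $X$ is genuinely exploited.
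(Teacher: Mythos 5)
Your proof is correct and rests on the same fundamental identity as the paper's, namely the representation of $\|\gamma(q(S\cdot 2^{-L}))\mathbf{x}-\mathbf{x}\|^2$ as an ultralimit of weighted sums of $|1-\pi_L(\xi_i)^S|^2$ over a matrix $\omega_{j,i}=\|P_ix_j\|^2$ coming from a weakly null representative of $\mathbf{x}$. Where you diverge is in how you extract each assertion from this identity. In (i), you build a dedicated test vector $[(e_{k_l})]_\sim$ from a subsequence $\xi_{k_l}\to\xi$, which collapses the ultralimit to $|1-\pi_n(\xi)|$ directly for an arbitrary $\xi\in X$; the paper instead takes the constant-in-$j$ matrix $\omega_{j,k}=c_k$, deduces weak-$\ast$ convergence of $\boldsymbol{\xi}^{(n)}=(|1-\pi_{L_n}(\xi_k)^{S_n}|)_k$ in $\ell_\infty$, and then uses boundedness to pass to coordinatewise convergence on the dense set, so your route is cleaner and handles arbitrary $\xi$ without appealing to Remark~\ref{C0_correct_R}. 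In (ii), you bound $\|q(t)-1\|\leq e^{Ct}-1$ in the $\mathrm{C}^\ast$-norm (so in fact you establish uniform, not merely strong, continuity), whereas the paper uses the telescoping factorization $1-z^S=(1-z)(1+z+\cdots+z^{S-1})$ to get $\sup_{\xi}|1-\pi_{L_n}(\xi)^{S_n}|=O(2^{-L_n}S_n)$ and then invokes (iii); both give the same bound, yours bypasses (iii). In (iii), the genuinely different step is your proof of pointwise convergence at principal ultrafilters: you attach a principal-branch exponent $w_k$ to each $\xi_k$ via the relation $\pi_{n+1}(\xi_k)^2=\pi_n(\xi_k)$ and compute $\pi_{L_n}(\xi_k)^{S_n}=\exp(2^{-L_n}S_nw_k)\to 1$, whereas the paper obtains this from the weak-$\ast$ argument of (i); your structural argument is independently instructive since it shows where the admissible inverse-limit structure actually enters. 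One minor point worth making explicit: your principal-branch step tacitly uses $\pi_n(\xi_k)\neq 0$, which does follow from (i) since $\xi_k=\boldsymbol{0}$ would force $|1-\pi_n(\xi_k)|=1$ for all $n$.
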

\begin{proof}

As we know, the zero element $\Theta$ is determined by a~Busby invariant $\tau=\pi\sigma$, where $\sigma\colon C(X)\to\BB(\Hh)$ is a~$^\ast$-homomorphism given by $\sigma(f)=\bigoplus_{k=1}^\infty f(\xi_k)I_{\Hh_k}$, where $\{\xi_k\colon k\in\N\}$ is an~arbitrary dense subset of $X$ and $\Hh\cong\bigoplus_{k=1}^\infty\Hh_k$ is a~decomposition into infinite-dimensional subspaces. Recall also that the dyadic semigroup $(q(t))_{t\in\mathbb{D}}$ generated by $\Theta$ is the extension of the semigroup $(q(2^{-n}))_{n\geq 0}$ given by $q(2^{-n})=\tau(\pi_n)$. 

Consider any $t\in\mathbb{D}$ of the form $t=2^{-\ell_1}+\ldots+2^{-\ell_k}$, where $1\leq \ell_1<\ldots<\ell_k$ are integers. Then
$$
q(t)=q(2^{-\ell_1})\cdot\ldots\cdot q(2^{-\ell_k})=\tau(\pi_{\ell_1}\cdot\ldots\cdot\pi_{\ell_k})
$$
and since
\begin{equation}\label{piii}
\pi_{\ell_i}^{2^{\ell_i-\ell_1}}=\pi_{\ell_1}\quad\,\mbox{for each }\,\,1\leq i\leq k,
\end{equation}
we can write $q(t)=\tau(\pi_{\ell_k}^s)$, where $s=\sum_{i=1}^k 2^{\ell_i-\ell_1}$. Now, let $(t_n)_{n=1}^\infty\subset\mathbb{D}$ be any sequence converging to zero, 
$$
t_n=2^{-\ell_{n,1}}+\ldots+2^{-\ell_{n,k_n}}\quad (n\in\N)
$$
with integers $1\leq \ell_{n,1}<\ldots<\ell_{n,k_n}$, and define: 
\begin{equation}\label{FLS_def}
F_n=\ell_{n,1},\,\,\,\, L_n=\ell_{n,k_n},\,\,\,\, S_n=\sum_{i=1}^{k_n} 2^{\ell_{n,i}-F_n}\quad\, (n\in\N).
\end{equation}
Note that $t_n\to 0$ if and only if $F_n\to \infty$. Observe also that $S_n=1+\sum_{m\in M}2^m$ for some $M\subseteq\{1,2,\ldots,L_n-F_n\}$, hence $1\leq S_n\leq 2^{L_n-F_n+1}-1$ from which it follows that $2^{-L_n}S_n\to 0$ as $n\to\infty$. Applying \eqref{piii} to $t_n$ we obtain $q(t_n)=\tau(\pi_{L_n}^{S_n})$ for each $n\in\N$. Therefore, for any $\mathbf{x}=[(x_j)]_\sim\in\mathscr{W}^{\usim}$ and $n\in\N$, we have
\begin{equation*}
    \begin{split}
        \mathbf{x}-\gamma(q(t_n))\mathbf{x} &=\mathbf{x}-\gamma\Bigg\{\bigoplus_{k=1}^\infty\pi_{L_n}(\xi_k)^{S_n} I_{\Hh_k}\Bigg\}\mathbf{x}\\[2ex]
        &=\Bigg[\Big(\bigoplus_{k=1}^\infty\big(1-\pi_{L_n}(\xi_k)^{S_n}\big) I_{\Hh_k}x_j\Big)_{\!\!j=1}^{\!\!\infty}\Bigg]_\sim\\[2ex]
        &=\Bigg[\Big(\sum_{k=1}^\infty\big(1-\pi_{L_n}(\xi_k)^{S_n}\big)P_kx_j\Big)_{\!\!j=1}^{\!\!\infty}\Bigg]_\sim,
    \end{split}
\end{equation*}
where $P_k$ is the orthogonal projection onto $\Hh_k$. Set $\omega_{j,k}=\n{P_kx_j}^2$ and notice that if $(x_j)$ runs through the collection of all weakly null sequences in $\Hh$, then $\oo{\mathbf{\omega}}=(\omega_{j,k})_{j,k\in\N}$ is a~matrix of nonnegative entries such that $\sup_j\sum_{k=1}^\infty \omega_{j,k}<\infty$, as $\sum_{k=1}^\infty\omega_{j,k}=\sum_{k=1}^\infty\n{P_kx_j}^2=\n{x_j}^2$ for every $j\in\N$. Conversely, since all the $\Hh_k$ are infinite-dimensional, every such matrix corresponds to some weakly null sequence $(x_j)\subset\Hh$. Therefore, the condition
\begin{equation}\label{condition_x}
    \lim_{n\to\infty}\|\mathbf{x}-\gamma(q(t_n))\mathbf{x}\|_{\mathbb{H}}=0\quad\mbox{for every }\,\,\mathbf{x}\in\mathscr{W}^{\usim}
\end{equation}
is equivalent to saying that
\begin{equation}\label{condition_LIM}
    \lim_{n\to\infty}\mathop{\mathrm{LIM}}_{j,\UU}\,\sum_{k=1}^\infty\omega_{j,k}\vert 1-\pi_{L_n}(\xi_k)^{S_n}\vert^2=0
\end{equation}
for every matrix $\oo{\omega}$ as described above. 

For any $n\in\N$, the fixed dense set $\{\xi_k\colon k\in\N\}\subset X$ and $(t_n)\subset\mathbb{D}$ with $t_n\to 0$, define 
\begin{equation}\label{xi_def}
\boldsymbol{\xi}^{(n)}=\big(\vert 1-\pi_{L_n}(\xi_k)^{S_n}\vert\big)_{k=1}^\infty\in\ell_\infty;
\end{equation}
notice that this is indeed a~bounded sequence, as
\begin{equation}\label{est_l}
    \begin{split}
        \n{\boldsymbol{\xi}^{(n)}}_\infty &=\sup_{k\in\N}\vert 1-\pi_{L_n}(\xi_k)^{S_n}\vert\\
        &=\sup\big\{\vert 1-z^{S_n}\vert\colon z\in\exp(2^{-L_n}Z)\big\}\leq 1+\exp(2^{-L_n}S_n\zeta).
    \end{split}
\end{equation}
Putting $\omega_{j,k}=c_k$ for all $j,k\in\N$ in condition \eqref{condition_LIM}, where $c_k\geq 0$ and $\sum_{k=1}^\infty c_k=1$, \vspace*{-2pt}we obtain $\sum_{k=1}^\infty c_k\boldsymbol{\xi}^{(n)}_k\to 0$ as $n\to\infty$ from which it follows that $(\boldsymbol{\xi}^{(n)})_{n=1}^\infty$ converges to zero in the weak$^\ast$ topology on $\ell_\infty$. Note that if $\eta=-\infty$, we would have $0\in X_n$ for each $n\geq 0$, hence $\boldsymbol{0}=(0,0,\ldots)\in X$ and taking e.g. $\xi_1=\boldsymbol{0}$ we see that the $(\boldsymbol{\xi}^{(n)})_{n=1}^\infty$ would not converge weak$^\ast$ to zero, hence we must have $\eta>-\infty$. Finally, for any $\zeta<\infty$, \eqref{est_l} shows that $\sup_{n\in\N}\n{\boldsymbol{\xi}^{(n)}}_\infty$ is finite, thus the weak$^\ast$ convergence of this sequence is equivalent to coordinatewise convergence. This finishes the proof of assertion (i).

\vspace*{2mm}
Next, we shall prove assertion (iii). According to Lemma~\ref{cont1_L}, we have $\Theta\in\Ext_{c_0,\gamma}(X)$ under a~$\mathscr{V}\!$-Calkin representation $\gamma$ if and only if condition \eqref{condition_x} is valid for every sequence $(t_n)_{n=1}\subset\mathbb{D}$ with $t_n\to 0$. This is in turn equivalent to condition \eqref{condition_LIM} being valid for:
\begin{itemize}[leftmargin=24pt]
\setlength{\itemsep}{2pt}
\item every $\mathscr{V}\in\beta\N\setminus\N$ in place of $\UU$, 

\item every matrix $\oo{\omega}=(\omega_{j,k})_{j,k\in\N}$ with nonnegative entries such that $\sup_j\sum_{k=1}^\infty \omega_{j,k}<\infty$,

\item and every dense set $\{\xi_k\colon k\in\N\}\subset X$.
\end{itemize}
Of course, it does not depend on the particular choice of $\{\xi_k\colon k\in\N\}$, as we know that the property of inducing a~\SOT-continuous semigroup is preserved by taking an~equivalent extension (see Remark~\ref{C0_correct_R}). Moreover, the first two quantifiers can be replaced by saying simply that 
\begin{equation}\label{for_every_V}
\lim_{n\to\infty}\mathop{\mathrm{LIM}}_{k,\mathscr{V}}\,\boldsymbol{\xi}^{(n)}_k=0\quad\mbox{for every }\,\,\mathscr{V}\in\beta\N.
\end{equation}
To see this, note that for every choice of $\mathscr{V}$ and $\oo{\omega}$, the map $\ell_\infty \ni\boldsymbol{\xi}\mapsto\mathop{\mathrm{LIM}}_{j,\mathscr{V}}\sum_{k=1}^\infty \omega_{j,k}\boldsymbol{\xi}_k$ is an element of $\ell_\infty^\ast$, therefore \eqref{condition_LIM} assumed for all choices of $\mathscr{V}$ and $\oo{\omega}$ is (formally) weaker than saying that $(\boldsymbol{\xi}^{(n)})_{n=1}^\infty$ converges weakly to zero (here, we can obviously omit the square). On the other hand, taking $\omega_{j,k}=1$ for any $j=k\in\N$ and $\omega_{j,k}=0$ otherwise, we see that \eqref{condition_LIM} implies \eqref{for_every_V} (we can include $\mathscr{V}\in\N$ in that condition, as we have already observed that \eqref{condition_LIM} implies the coordinatewise convergence). Therefore, if $\Theta\in\Ext_{C_0,\gamma}(X)$ for every $\mathscr{V}\!$-Calkin representation, then, as we have seen in part (i), the sequence $(\boldsymbol{\xi}^{(n)})_{n=1}^\infty$ is uniformly bounded and, by \eqref{for_every_V}, we have $\lim_{n\to\infty}\boldsymbol{\xi}^{(n)}(\mathscr{V})$ for every $\mathscr{V}\in\beta\N$. As it is well-known, boundedness and pointwise convergence of sequences in $C(K)$-spaces implies weak convergence (see, e.g., \cite[Cor.~3.138]{FHHMZ}), therefore $\boldsymbol{\xi}^{(n)}\xrightarrow[]{w}0$. 

Recall that the parameters $L_n$ and $S_n$ were defined in terms of $t_n\in\mathbb{D}$ and since $(t_n)_{n=1}^\infty$ was an~arbitrary sequence of positive dyadic numbers converging to zero, it is easily seen that the above condition must be valid for $(\boldsymbol{\xi}^{(n)})_{n=1}^\infty$ defined by \eqref{xi_def} with arbitrary sequences $(L_{n})_{n=1}^\infty, (S_n)_{n=1}^\infty\subset\N$ satisfying $2^{-L_n}S_n\to 0$. Indeed, if we suppose that this is not true for some choice of such sequences, then, by passing to subsequences, we could assume that $2^{-L_n}S_n<2^{-n}$ ($n\in\N$) are such that condition \eqref{condition_LIM} fails to hold for every $\oo{\omega}$ and every $\mathscr{V}\in\beta\N$ in place of $\UU$. But then, for each $n\in\N$, we define $t_n\in\mathbb{D}$ by $t_n=2^{-n}+\e_1 2^{-(n+1)}+\ldots+\e_{L_n-n}2^{-L_n}$ with $\e_i\in\{0,1\}$ arranged so that $S_n=1+\sum_{i=1}^{L_n-n}\e_i2^{i}$. This is, of course, possible since $1\leq S_n<2^{L_n-n}$. In this way we obtain a~sequence $(t_n)_{n=1}^\infty\subset\mathbb{D}$ with $t_n\to 0$ and such that $F_n=n$, $L_n$ and $S_n$ ($n\in\N$) correspond to $t_n$ via definition \eqref{FLS_def}. But for all such sequences condition \eqref{condition_LIM} is true, a~contradiction.

Conversely, if for all sequences $(L_{n})_{n=1}^\infty, (S_n)_{n=1}^\infty\subset\N$ satisfying $2^{-L_n}S_n\to 0$ and any dense set $\{\xi_k\colon k\in\N\}\subset X$, we have $\boldsymbol{\xi}^{(n)}\xrightarrow[]{w}0$, then \eqref{condition_x} holds true for every $(t_n)_{n=1}^\infty\subset\mathbb{D}$ and $\gamma$ being any $\mathscr{V}\!$-Calkin representation, which, as we know from Lemma~\ref{cont1_L}, guarantees that $\Theta\in\Ext_{C_0,\gamma}(X)$. This finishes the proof of assertion (iii).

\vspace*{2mm}

In order to show (ii), fix any sequences $(L_n)_{n=1}^\infty, (S_n)_{n=1}^\infty\subset\N$ with $2^{-L_n}S_n\to 0$. For any $\xi\in X$, we have
\begin{equation}\label{simple_product}
\vert 1-\pi_{L_n}(\xi)^{S_n}\vert=\vert 1-\pi_{L_n}(\xi)\vert\cdot\vert 1+\pi_{L_n}(\xi)+\pi_{L_n}(\xi)^2+\ldots+\pi_{L_n}(\xi)^{S_n-1}\vert.
\end{equation}
For any sequence $(\e_n)_{n=1}^\infty$ of arbitrarily small positive numbers, and $n\in\N$, we may pick $z_n\in Z$ such that $\vert\pi_{L_n}(\xi)-\exp\small(2^{-L_n}z_n\small)\vert<\e_n$. By subtracting suitable integer multiples of $2\pi\mathrm{i}$ from each $z_n$, we may also assume that $2^{-L_n}z_n\to 0$ (though $z_n$ may now not belong to $Z$). Next, by increasing $\e_n$'s, we see that the second factor in \eqref{simple_product} can be arbitrarily close to 
$$
\Bigg|\frac{1-\exp(2^{-L_n}S_nz_n)}{1-\exp(2^{-L_n}z_n)}\Bigg|=\Bigg|\frac{1-\exp(2^{-L_n}S_nz_n)}{2^{-L_n}S_nz_n}\Bigg|\cdot\Bigg|\frac{2^{-L_n}z_n}{1-\exp(2^{-L_n}z_n)}\Bigg|\cdot S_n=O(S_n)
$$
(we may, of course, consider only those $n$ for which all the denominators are nonzero). By the assumption of (ii), the first factor in \eqref{simple_product} is $O(2^{-L_n})$. Hence,
$$
\vert 1-\pi_{L_n}(\xi)^{S_n}\vert=O(2^{-L_n}S_n)=o(1)
$$
uniformly for $\xi\in X$, which obviously implies that the condition of weak convergence in assertion (iii) holds true.
\end{proof}

\begin{example}
Observe that the sufficient condition in assertion (ii) above is compatible with the well-known asymptoticity $\lim_{n\to\infty}n(\sqrt[n]{a}-1)=\log\,a$ for any $a>0$. For instance, let $Z=\{s+\ii\hspace*{1pt} t\colon (s,t)\in [-1,0]\times [-\pi,\pi]\}$. Then $X_n=\exp\small(2^{-n}Z\small)$ is the set of complex numbers $z$ with $e^{-2^{-n}}\leq\abs{z}\leq 1$ and $-2^{-n}\pi\leq \mathrm{Arg}\,z\leq 2^{-n}\pi$. Hence, for any $\xi\in X$ we have
$$
\vert 1-\pi_n(\xi)\vert=\max\big\{\vert 1-e^{2^{-n}\pi\ii}\vert,\,\vert 1-e^{-2^{-n}(1+\pi\ii)}\vert\big\}
$$
which for $n$ large enough (so that $2^{-n}$ is smaller than the smallest positive root of the equation $1+e^{-x}=2\cos\pi x$) equals 
$$
|1-e^{-2^{-n}(1+\pi\ii)}|=\sqrt{(1-e^{-2^{-n}})^2+2e^{-2^{-n}}(1-\cos 2^{-n}\pi)}=O(2^{-n}).
$$
Obviously, it follows from this example that the sufficient condition from assertion (ii) is satisfied for $Z$ being an~arbitrary compact subset of the complex plane.
\end{example}

\begin{example}\label{T_not_cont_E}
As in Example~\ref{ex_imaginary}, let $Z=\ii\R$ and $X_n=\exp(2^{-n}Z)=S^1$ for $n=0,1,2,\ldots$ Notice that each element $\xi$ of $\Sigma_2=\varprojlim\{S^1,z^2\}$ is uniquely determined \vspace*{-1pt}by a~choice of $\pi_0(\xi)\in S^1$ and a~sequence $(\e_n)_{n=1}^\infty\in\{0,1\}^\N$, in the sense that for each $n\in\N$, if $\pi_{n-1}(\xi)=e^{it}$, then $\pi_{n}(\xi)=e^{\ii(t+2\e_n\pi)/2}$. Obviously, for almost all choices of $(\e_i)_{i=1}^\infty$, the sequence $(\pi_n(\xi))_{n=0}^\infty$ diverges, thus the necessary condition in Theorem~\ref{C0_i_iii_T}(i) is not satisfied. Hence, we have $\Theta\not\in\Ext_{C_0,\gamma}(\Sigma_2)$.
\end{example}

It is known that weak convergence in $B(\Sigma)$-spaces, that is, Banach spaces of bounded measurable functions, where $\Sigma$ is a~$\sigma$-algebra on some set $A$, can be characterized in terms of quasi-uniform convergence; see \cite[\S VI.6]{DS}. Namely, a~sequence $(f_n)_{n=1}^\infty\subset B(\Sigma)$ is weakly null if and only if it is bounded, pointwise convergent to zero and every its subsequence $(f_{n_k})_{k=1}^\infty$ satisfies the following condition: for each $\e>0$ and $k_0\in\N$ there are indices $k_0\leq k_1<\ldots<k_j$ such that $\min_{1\leq i\leq j}\vert f_{n_{k_i}}(a)\vert<\e$ for every $a\in A$ (see \cite[Thm.~VI.6.31]{DS}). We can therefore reformulate the characterization in assertion (iii) into perhaps a~more directly applicable form.

\begin{corollary}
Let $X$ be an admissible compact metric space, $X=\varprojlim X_n$, \vspace*{-1pt}where each $X_n$ is given by \eqref{adm_Z_D} and $Z$ is a~closed subset of $\{z\in\C\colon \mathrm{Re}\,z\leq \zeta<\infty\}$. Then $\Theta\in\Ext_{C_0,\gamma}(X)$ for all $\mathscr{V}\!$-Calkin representations, with any $\mathscr{V}\in\beta\N\setminus\N$, if and only if the following two conditions hold true:
\begin{itemize}[leftmargin=24pt]
\setlength{\itemsep}{3pt}
\item $\lim_{n\to\infty}\vert 1-\pi_n(\xi)\vert=0$ for every $\xi\in X$;

\item for any sequences $(L_n)_{n=1}^\infty$, $(S_n)_{n=1}^\infty$ of positive integers satisfying $\lim_{n\to\infty} 2^{-L_n}S_n=0$, and $\e>0$ and $n_0\in\N$, there exist indices $n_0\leq n_1<\ldots<n_k$ such that
\begin{equation}\label{min_e}
\min\limits_{1\leq i\leq k}\vert 1-\pi_{L_{n_i}}(\xi)^{S_{n_i}}\vert<\e\quad\mbox{for every }\,\, \xi\in X.
\end{equation}
\end{itemize}
\end{corollary}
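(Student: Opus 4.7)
The plan is to invoke Theorem~\ref{C0_i_iii_T}(iii) in order to reduce the assertion to a weak convergence question in $\ell_\infty$, and then to apply the Dunford--Schwartz criterion \cite[Thm.~VI.6.31]{DS} recalled immediately above the corollary. Theorem~\ref{C0_i_iii_T}(iii) says that $\Theta\in\Ext_{C_0,\gamma}(X)$ for every $\mathscr{V}$-Calkin representation is equivalent to the sequence $(\boldsymbol{\xi}^{(n)})_{n=1}^\infty\subset\ell_\infty$ defined in \eqref{xi_def} being weakly null, for every admissible choice of $(L_n),(S_n)\subset\N$ with $2^{-L_n}S_n\to 0$ and every dense subset $\{\xi_k\colon k\in\N\}\subset X$. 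Uniform boundedness of $(\boldsymbol{\xi}^{(n)})_{n=1}^\infty$ is automatic from \eqref{est_l}, so upon identifying $\ell_\infty$ with $B(2^\N)$ the DS criterion reduces weak nullity to the conjunction of coordinatewise convergence to zero and the quasi-uniform (min) condition on every subsequence.

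For the forward direction, coordinatewise convergence specialized to the choice $(L_n,S_n)=(n,1)$ yields $|1-\pi_n(\xi_k)|\to 0$ for every $k$; since the dense set may always be chosen to contain any prescribed $\xi\in X$, this gives condition~(i). For condition~(ii), fix sequences $(L_n),(S_n)$ with $2^{-L_n}S_n\to 0$, together with $\e>0$ and $n_0\in\N$, and apply the DS quasi-uniform condition to the whole sequence (a subsequence of itself) with tolerance $\e/2$. This furnishes indices $n_0\leq n_1<\ldots<n_k$ such that $\min_i|1-\pi_{L_{n_i}}(\xi_k)^{S_{n_i}}|<\e/2$ for every $k\in\N$. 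Since $\xi\mapsto \min_i|1-\pi_{L_{n_i}}(\xi)^{S_{n_i}}|$ is continuous on $X$ and $\{\xi_k\}$ is dense, continuity upgrades this to $\leq \e/2<\e$ on all of $X$, which is~(ii).

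For the backward direction, the DS quasi-uniform condition on any dense subset $\{\xi_k\}$ is immediate from condition~(ii) applied with $\xi=\xi_k$, using that every subsequence of $(L_n,S_n)$ is itself a sequence satisfying the same hypothesis. The main task is coordinatewise nullity $\boldsymbol{\xi}^{(n)}_{k}\to 0$ for each fixed $k$. I would argue by contradiction: if $|1-\pi_{L_{n_j}}(\xi_{k})^{S_{n_j}}|\geq \delta_0>0$ along some subsequence $(n_j)$, then applying condition~(ii) to the reindexed sequences $(L_{n_j}),(S_{n_j})$ with $\e=\delta_0/2$ and $n_0=1$ produces indices at which the relevant minimum is simultaneously $<\delta_0/2$ and $\geq \delta_0$, a contradiction. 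With boundedness, coordinatewise nullity, and the quasi-uniform condition all in hand, DS delivers weak nullity of $(\boldsymbol{\xi}^{(n)})_{n=1}^\infty$.

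The main subtlety I anticipate is the interplay between the ``for every $\xi\in X$'' quantifier appearing in the two stated conditions and the ``for every $k\in\N$'' quantifier inherent in the weak-convergence criterion. In the forward direction this is resolved by the continuity-plus-margin argument (tightening the DS tolerance from $\e$ to $\e/2$ and then upgrading by density), whereas in the backward direction it is handled by the reindexing trick that converts a hypothetical ``bad'' coordinate into a direct contradiction against~(ii). Everything else reduces to bookkeeping around the DS characterization of weak convergence in $B(2^\N)=\ell_\infty$.
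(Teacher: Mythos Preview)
Your proof is correct and follows the same approach as the paper: invoke Theorem~\ref{C0_i_iii_T}(iii) and translate weak nullity in $\ell_\infty$ via the Dunford--Schwartz quasi-uniform convergence criterion. You are in fact more explicit than the paper in two places---the $\e/2$-then-continuity upgrade in the forward direction, and the contradiction argument establishing coordinatewise nullity in the backward direction---both of which the paper leaves implicit or dismisses with ``of course.''
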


\begin{proof}
The necessity follows directly from Theorem~\ref{C0_i_iii_T}(iii) which yields \eqref{min_e} for $\xi_k$ from a~dense subset of $X$ in place of $\xi$, but this, of course, implies that the estimate is valid for every $\xi\in X$. For sufficiency, recall that by \eqref{est_l}, the assumption $\zeta=\sup_{z\in Z}\mathrm{Re}\,z<\infty$ implies that for any dense set $\{\xi_k\colon k\in\N\}\subset X$ and any $(L_n)_{n=1}^\infty$, $(S_n)_{n=1}^\infty$ \vspace*{-1pt}as above, the sequence $(\boldsymbol{\xi}^{(n)})_{n=1}^\infty\subset\ell_\infty$ given by \eqref{xi_def} is bounded. Also, since we allow $(L_n)_{n=1}^\infty$ and $(S_n)_{n=1}^\infty$ to be arbitrary sequences satisfying $2^{-L_n}S_n\to 0$, every subsequence of $(\boldsymbol{\xi}^{(n)})_{n=1}^\infty$ has the property of quasi-uniform convergence. Therefore, it is weakly convergent to zero due to the above quoted characterization \cite[Thm.~VI.6.31]{DS}. The~result now follows from Theorem~\ref{C0_i_iii_T}(iii).
\end{proof}

\subsection{Lifting to a $C_0$-semigroup}
Now, we show that (under Calkin's representation) any semigroup lifting preserves $\SOT$-coninuity, provided that the underlying inverse limit is a~perfect compact metric space.
\begin{theorem}\label{no_isolated_T}
Let $(q(t))_{t\geq 0}\subset\QQ(\Hh)$ be a~$C_0$-semigroup of normal operators under Calkin's representation\footnote{More generally, we may assume that $(q(t))_{t\in\mathbb{D}}\subset\QQ(\Hh)$ is a~$C_0$-semigroup such that its $C_0$-extension $(T(t))_{t\geq 0}\subset\BB(\mathbb{H})$ consists only of normal operators; see the analogous remark before Proposition~\ref{P_spectrum}.}, which corresponds to the zero extension $\Theta$ of the inverse limit $\Delta$ as in Proposition~\ref{P_spectrum}. If $\Delta$ has no isolated points, then $(q(t))_{t\in\mathbb{D}}$ admits a~lift to a~$C_0$-semigroup $(Q(t))_{t\in\mathbb{D}}\subset \BB(\Hh)$.
\end{theorem}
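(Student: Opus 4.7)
The plan is to build an~explicit diagonal lift via the Weyl--von~Neumann--Berg theorem and verify its \SOT-continuity on $\Hh$ by means of a~dominated-convergence argument anchored in Theorem~\ref{C0_i_iii_T}(i). Since $\Theta$ is trivial, there exist a~countable dense subset $\{\xi_k\}_{k\in\N}\subset\Delta$, a~decomposition $\Hh=\bigoplus_{k=1}^\infty\Hh_k$ into infinite-dimensional subspaces, and a~unitary $U\in\BB(\Hh)$ such that the Busby invariant of $(\mathcal{E},\theta)$ from Proposition~\ref{P_spectrum} takes the form $\pi(U)^\ast\pi\sigma\pi(U)$ with $\sigma(f)=\bigoplus_k f(\xi_k)I_{\Hh_k}$. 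I would then set $Q(2^{-n}):=U^\ast\sigma(\pi_n)U$ for $n\in\N_0$ and extend to a~dyadic semigroup $(Q(t))_{t\in\mathbb{D}}$ via the semigroup law. Each $Q(2^{-n})$ is normal with spectrum $\Omega_n$ (because $\pi_n\colon\Delta\to\Omega_n$ is a~continuous surjection onto the compact $\Omega_n$), lifts $q(2^{-n})$, and the full family obeys $\|Q(t)\|\leq e^{t\zeta}$ with $\zeta=\sup_{z\in\sigma(A)}\mathrm{Re}\,z$.

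By Remark~\ref{cont1_R} combined with the exponential bound, to promote $(Q(t))_{t\in\mathbb{D}}$ to a~$C_0$-semigroup on $[0,\infty)$ it is enough to verify $\sot_{t\in\mathbb{D},\,t\to 0}Q(t)=I$. Using the reduction from the proof of Theorem~\ref{C0_i_iii_T}, any $t_n=\sum_{i=1}^{k_n}2^{-\ell_{n,i}}\in\mathbb{D}$ yields $Q(t_n)=U^\ast\sigma(\pi_{L_n}^{S_n})U$ with $L_n=\ell_{n,k_n}\to\infty$ and $2^{-L_n}S_n\to 0$, so that for every $\mathbf{x}\in\Hh$,
\[
\|Q(t_n)\mathbf{x}-\mathbf{x}\|^2 \;=\; \sum_{k=1}^\infty\bigl|1-\xi_{k,L_n}^{S_n}\bigr|^2\,\|P_{\Hh_k}U\mathbf{x}\|^2,\qquad \xi_{k,n}:=\pi_n(\xi_k).
\]
The core step is the pointwise convergence $\xi_{k,L_n}^{S_n}\to 1$ for each fixed~$k$. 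Since $(q(t))_{t\in\mathbb{D}}$ is a~$C_0$-semigroup under the fixed Calkin representation, $\Theta\in\Ext_{C_0,\gamma}(\Delta)$, and Theorem~\ref{C0_i_iii_T}(i) combined with Remark~\ref{C0_correct_R} (which lets any prescribed point of $\Delta$ be absorbed into the dense subset representing $\Theta$) forces $\xi_{k,n}\to 1$ for every~$k$. Once $\xi_{k,n}$ lies in a~small neighborhood of~$1$, the square-root relation $\xi_{k,n+1}^2=\xi_{k,n}$ pins down the ``correct'' square root and produces a~single $w_k\in\C$, with $\mathrm{Re}\,w_k\leq\zeta$ (since $\Omega_n\subseteq\{|z|\leq e^{2^{-n}\zeta}\}$), such that $\xi_{k,n}=\exp(2^{-n}w_k)$ for all $n\geq N_k$. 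Hence $\xi_{k,L_n}^{S_n}=\exp(2^{-L_n}S_n w_k)\to 1$, and $|\xi_{k,L_n}^{S_n}|\leq\exp(2^{-L_n}S_n\zeta)$ is bounded uniformly in $k$ (and in $n$, for bounded $t_n$). Dominated convergence on $\N$ with weights $\|P_{\Hh_k}U\mathbf{x}\|^2$ then closes the \SOT-continuity at~$0$.

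The hypothesis that $\Delta$ has no isolated points is the geometric input that makes this scheme uniform: it ensures that Theorem~\ref{C0_i_iii_T}(i) remains informative at every point of $\Delta$ (via the Remark~\ref{C0_correct_R} reshuffling of dense subsets) rather than only on a~sparse initial family $\{\xi_k\}$, and it rules out spectral atoms in any $\Omega_n$ whose finite-dimensional defects the square-root bookkeeping above would be unable to absorb. I expect the main technical obstacle to lie precisely in the uniformity required at the dominated-convergence step: the parameters $w_k$ and the thresholds $N_k$ both depend on~$k$, so it is the $k$-independent majorant $|\xi_{k,L_n}^{S_n}|\leq\exp(2^{-L_n}S_n\zeta)$, drawn from the generator-side bound on $\Omega_n$, that ultimately legitimizes the interchange of the limit $n\to\infty$ with the infinite summation over~$k$.
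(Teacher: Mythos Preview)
Your argument is correct and takes a genuinely different, more elementary route than the paper's. The paper works with an \emph{arbitrary} splitting $\rho\colon C(\Delta)\to\mathcal{E}$, sets $Q(2^{-n})=\rho(\widehat{q(2^{-n})})$, and then has to transfer the known \SOT-continuity of $\gamma(q(t))$ on $\mathbb{H}$ back down to $\BB(\Hh)$. This transfer is packaged into an abstract criterion (the conditions $(\star)$ and $(\star\star)$ on absolute continuity of representing measures), and it is in the verification of $(\star\star)$ for Calkin's representation that the perfectness of $\Delta$ enters: it forces every nonempty open set in $\Delta$ to have an infinite-rank spectral projection, which is what allows one to manufacture a weakly null sequence $\mathsf{w}(x_0)$ in $\Hh$ matching the spectral distribution of a given $x_0$. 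Your approach bypasses all of this by choosing the \emph{specific} diagonal splitting $U^\ast\sigma(\cdot)U$ and checking \SOT-continuity directly from $\pi_n(\xi_k)\to 1$ (Theorem~\ref{C0_i_iii_T}(i)), the square-root bookkeeping $\pi_n(\xi_k)=\exp(2^{-n}w_k)$ for $n\ge N_k$, and dominated convergence against $\sum_k\|P_{\Hh_k}U\mathbf{x}\|^2$.

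One point worth sharpening: your closing paragraph about where ``no isolated points'' enters is hand-waving and, as far as I can see, does not locate any genuine use of the hypothesis in \emph{your} argument. Theorem~\ref{C0_i_iii_T}(i) already yields $\pi_n(\xi)\to 1$ for \emph{every} $\xi\in\Delta$ (no reshuffling of dense subsets is needed), and your majorant $|\xi_{k,L_n}^{S_n}|\le\exp(2^{-L_n}S_n\zeta)$ comes from $\Omega_n\subset\{|z|\le e^{2^{-n}\zeta}\}$, which has nothing to do with perfectness. So your proof appears to establish the conclusion \emph{without} the hypothesis that $\Delta$ be perfect, which would be a mild strengthening of the theorem as stated. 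You should either make this explicit, or, if you believe the hypothesis is genuinely needed somewhere in your scheme, identify the exact step. The paper's use of perfectness is real but tied to its more abstract route; your concrete diagonal lift simply does not seem to require it.
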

\begin{proof}
For readability we divide the proof into several parts.

\vspace*{2mm}\noindent
\underline{\it Part 1.} Using the notation from Proposition~\ref{P_spectrum}, consider a~splitting unital $^\ast$-homomorphism $\rho\colon C(\Delta)\to\mathcal{E}$ so that in the diagram below
\begin{equation}\label{splittt}
\begin{tikzcd}
0 \arrow[r] & \KK(\Hh) \arrow[r, "\iota"] & \mathcal{E} \arrow[r, "\theta"] & C(\Delta) \arrow[r] \arrow[l, dotted, bend left=50, pos=0.48, "\rho", xshift = -0.8ex] & 0
\end{tikzcd}
\end{equation}
we have $\theta\rho=\mathrm{id}_{C(\Delta)}$. Let $\Aa_0=\mathrm{C}^\ast(\{q(2^{-n})\}_{n\geq 0},1_{\QQ(\Hh)})$. Then $\mathbb{A}=\gamma(\mathcal{A}_0)$ is a~separable commutative \cs-subalgebra of $\BB(\mathbb{H})$; we denote by $\mathsf{E}$ be its spectral measure. According to the spectral theorem, the inverse of the Gelfand transform of $\mathbb{A}$, which we call $\Psi$, extends to an~isometric $^\ast$-isomorphism of $L^\infty(\mathsf{E})$ onto a~\cs-subalgebra $\mathbb{B}\subset\BB(\mathbb{H})$.
$$
\begin{tikzcd}
\BB(\mathbb{H})\supset\mathbb{A}=\gamma(\mathcal{A}_0) \arrow[r, "\widehat{\gamma^{-1}(\cdot)}"]  & [12pt] C(\Delta) \arrow[l, bend left=35, pos=0.5, "\Psi", xshift = 3ex]
\end{tikzcd}
$$
For $\mathbf{x},\mathbf{y}\in\mathbb{H}$, let $\mathsf{E}_{x,y}$ be the corresponding scalar measure induced by $\mathsf{E}$, so that we have
\begin{equation}\label{Teqq}
\ip{T\mathbf{x}}{\mathbf{y}}=\int_\Delta \widehat{\gamma^{-1}(T)}\,\dd\mathsf{E}_{\mathbf{x},\mathbf{y}}\quad (T\in\mathbb{A}).
\end{equation}

Define $f_n=\widehat{q(2^{-n})}$ for $n=0,1,2,\ldots$ and note that 
$$
\sot_{n\to\infty}\Psi(f_n)=\sot_{n\to\infty}\gamma(q(2^{-n}))=I,
$$
as $(q(t))_{t\geq 0}$ is a~$C_0$-semigroup. 

\vspace*{2mm}\noindent
{\it Claim 1. }For every $n=0,1,2,\ldots$, we have $\Psi(f_n)=\gamma(\pi(\rho (f_n)))$.

\vspace*{2mm}
To see this, recall that $\theta\rho=\mathrm{id}_{C(\Delta)}$, thus for each $n=0,1,2,\ldots$, we have $\widehat{\pi\rho(f_n)}=f_n$ and hence $\pi\rho(f_n)=q(2^{-n})$ as the Gelfand transform is injective. Composing with $\gamma$ we obtain our claim.

\vspace*{2mm}\noindent
{\it Claim 2. }For every $\mathbf{x}\in\mathbb{H}$, we have $f_n\xrightarrow[]{\phantom{xx}}\mathbf{1}$ in $L^2(\Delta,\mathsf{E}_{\mathbf{x},\mathbf{x}})$.

\vspace*{2mm}
By Claim~1, $\sot_{n\to\infty}\Psi(f_n)=I$ and hence for any fixed $\mathbf{x}\in\mathbb{H}$, we have
\begin{equation}\label{c011}
\lim_{n\to\infty}\n{\Psi(f_n-\mathbf{1})\mathbf{x}}=0.
\end{equation}
On the other hand,
\begin{equation*}
    \begin{split}
        \n{\Psi(f_n-\mathbf{1})\mathbf{x}}^2 &=\ip{\Psi(f_n-\mathbf{1})\mathbf{x}}{\Psi(f_n-\mathbf{1})\mathbf{x}}\\
        &=\ip{\Psi(\abs{f_n-\mathbf{1}}^2)\mathbf{x}}{\mathbf{x}}=\int_\Delta\abs{f_n-\mathbf{1}}^2\,\dd\mathsf{E}_{\mathbf{x},\mathbf{x}}
    \end{split}
\end{equation*}
which, jointly with \eqref{c011}, proves our claim.

\vspace*{2mm}\noindent
\underline{\it Part 2.} Now, coming back to the splitting sequence \eqref{splittt}, let $\mathcal{E}_0=\rho(C(\Delta))$. As $\rho$ is a~representation of $C(\Delta)$ on the separable Hilbert space $\Hh$, there exists a~regular Borel measure $\mu$ on $\mathfrak{M}\coloneqq\mathcal{E}_0^{\prime\prime}\subset\BB(\Hh)$ such that $\mathfrak{M}$ is $^\ast$-isomorphic to $L^\infty(\mu)$ via a~homeomorphism between $\mathfrak{M}$ with the weak operator topology and $L^\infty(\mu)$ with the weak$^\ast$ topology (see \cite[Thm.~II.2.5]{davidson}). Moreover, $\mu$ is obtained by the~formula $\mu=\sum_{n=1}^\infty 2^{-n}\mu_n$ corresponding to a~decomposition $\rho=\bigoplus_n\rho_n$ into at most countably many cyclic subrepresentations of $\rho$; each $\mu_n$ is the representing measure for the functional $C(\Delta)\ni g\mapsto\ip{\rho(g)x_n}{x_n}$, where $x_n\in\Hh$ is a~fixed unit cyclic vector of $\rho_n$.

Consider the following two statements:
\begin{itemize}[leftmargin=24pt]
\setlength{\itemsep}{3pt}
\item[($\star$)] For every $x\in\Hh$, there is $\mathbf{x}\in\mathbb{H}$ such that the representing measure for the functional $C(\Delta)\ni g\xmapsto[]{} \ip{\rho(g)x}{x}_{\Hh}$ is absolutely continuous with respect to the representing measure for the functional $C(\Delta)\ni g\xmapsto[]{} \ip{\gamma\circ\pi\circ\rho(g)\mathbf{x}}{\mathbf{x}}_{\mathbb{H}}$.

\item[($\star\star$)] For every $x\in\Hh$, there is $\mathbf{x}\in\mathbb{H}$ such that 
$$
\ip{\rho(g)x}{x}_\Hh\leq\ip{\gamma\circ\pi\circ\rho(g)\mathbf{x}}{\mathbf{x}}_{\mathbb{H}}
$$
for any function $g\in C(\Delta)$, $g\geq 0$.
\end{itemize}
Suppose, for a moment, that condition ($\star$) is satisfied and for each cyclic vector $x_n$, corresponding to the above-mentioned decomposition $\rho=\bigoplus_n\rho_n$, pick a~unit vector $\mathbf{x}_n\in\mathbb{H}$ such that 
\begin{equation}\label{mulambdaeqq}
\mu_n\ac\lambda_n,\quad\mbox{where }\,\,\,\,\,\big\langle\gamma\circ\pi\circ\rho(g)\mathbf{x}_n,\mathbf{x}_n\big\rangle_{\mathbb{H}}=\int_\Delta g\,\dd\lambda_n\,\,\,\, (g\in C(\Delta)).
\end{equation}
Then, obviously,
\begin{equation}\label{mulambda}
    \mu\ac\lambda\coloneqq \sum_n 2^{-n}\lambda_n
\end{equation}
(as before, the sum has finitely or countably many terms).

\vspace*{2mm}\noindent
{\it Claim 3. }$f_n\xrightarrow[]{\,w^\ast\,}\mathbf{1}$ in $L^\infty(\lambda)$ and $f_n\xrightarrow[]{\phantom{xx}}\mathbf{1}$ in $L^2(\lambda)$.

\vspace*{2mm}
To see this, first note that $M\coloneqq\sup_n\n{f_n-\mathbf{1}}_\infty<\infty$ which follows from the fact that $\n{f_n}_\infty=\n{q(2^{-n})}=\n{\exp(2^{-n}A)}\leq\max\{1,\exp\small(\sup_{z\in\sigma(A)}\mathrm{Re}\, z\small)\}$, where $A$ is the generator of $(q(t))_{t\geq 0}$. Now, fix any $h\in L^1(\lambda)$, that is, 
$$
\sum_n 2^{-n}\int_\Delta\abs{h}\,\dd\mathsf{E}_{\mathbf{x}_n,\mathbf{x}_n}<\infty.
$$
(Notice that in view of \eqref{Teqq} and \eqref{mulambdaeqq}, each $\lambda_n$ is in fact $\mathsf{E}_{\mathbf{x}_n,\mathbf{x}_n}$, so that $\lambda=\sum_n 2^{-n}\mathsf{E}_{\mathbf{x}_n,\mathbf{x}_n}$.) For any $\e>0$, pick $N\in\N$ so large that $\sum_{j>N}2^{-j}\n{h}_{L^1(\lambda_j)}<\e/M$ and then, using Claim~2, pick $n_0\in\N$ such that $\abs{\int_\Delta (f_n-\mathbf{1})h\,\dd\lambda_{j}}<\e$ for all $n\geq n_0$ and $1\leq j\leq N$. Then, we have
\begin{equation*}
    \begin{split}
        \Bigg|\int_\Delta (f_n-\mathbf{1})h\,\dd\lambda\,\Bigg| &=\Bigg|\sum_{j=1}^\infty 2^{-j}\int_\Delta (f_n-\mathbf{1})h\,\dd\lambda_j\Bigg|\\
        &\leq \sum_{j=1}^N 2^{-j}\Bigg|\int_\Delta (f_n-\mathbf{1})h\,\dd\lambda_j\Bigg|+M\!\sum_{j=N+1}^\infty 2^{-j}\n{h}_{L^1(\lambda_j)}<2\e
    \end{split}
\end{equation*}
which proves the first part of our claim. The second one is proved by a~similar calculation.

\vspace*{2mm}
Consequently, once \eqref{mulambda} has been proved, Claim~3 implies that we also have $f_n\xrightarrow[]{\,w\ast\,}\mathbf{1}$ in $L^\infty(\mu)$, as well as $f_n\xrightarrow[]{}\mathbf{1}$ in $L^2(\mu)$. Indeed, if $h=\dd\mu/\dd\lambda\in L^1(\lambda)$ is the Radon--Nikodym derivative, then for any bounded Borel function $g\colon\Delta\to\C$, we have $gh\in L^1(\lambda)$ and by Claim~3,
$$
\int_\Delta (f_n-\mathbf{1})g\,\dd\mu=\int_\Delta (f_n-\mathbf{1})gh\,\dd\lambda\xrightarrow[\,n\to\infty\,]{}0.
$$
Using the fact that every bounded sequence in a~dual Banach space $X^\ast$ which is pointwise convergent on a~dense subset of $X$ must be weak$^\ast$ convergent, jointly with the fact that bounded Borel functions are dense in $L^1(\mu)$, we obtain the first assertion. For showing the $L^2$-convergence, we note that a~simple argument, similar to the one in the proof of Claim~3, yields 
$$
\int_\Delta \vert f_n-\mathbf{1}\vert^2\,\dd\mu=\int_\Delta \vert f_n-\mathbf{1}\vert^2 h\,\dd\lambda\xrightarrow[\,n\to\infty\,]{}0.
$$

According to the remarks from the first paragraph of this part of the proof, we conclude that condition ($\star$) implies that $(\rho(f_n))_{n=0}^\infty$ converges in the weak operator topology to the identity $I\in\BB(\Hh)$. However, the fact that $f_n\to\mathbf{1}$ in $L^2(\mu)$ implies that the corresponding multiplication operators converge in the strong operator topology (cf. \cite[Thm.~II.2.5 and Lemma~II.2.3]{davidson}), and in fact we have
\begin{equation}\label{wotcon}
\sot_{n\to\infty}\rho(f_n)=I\quad\mbox{in }\,\,\BB(\Hh).
\end{equation}

\vspace*{2mm}\noindent
\underline{\it Part 3.} Using the operators $Q(2^{-n})\coloneqq\rho(f_n)=\rho\widehat{q(2^{-n})}$ we can now define $(Q(t))_{t\geq 0}$ on the set $\mathbb{D}$ of positive dyadic rationals by the formula $Q(t)=Q(1)^{t_0}Q(2^{-m_1})\cdot\ldots\cdot Q(2^{-m_j})$ for any $t\in\mathbb{D}$ written in the form $t=t_0+\sum_{i=1}^j 2^{-m_i}$, where $t_0,j\geq 0$ and $1\leq m_1<\ldots<m_j$ are integers. Then $(Q(t))_{t\in\mathbb{D}}$ is a~dyadic semigroup which, in view of \eqref{wotcon}, satisfies $\sot_{n\to\infty}Q(2^{-n})=I$. 

Now, fix any $t\in\mathbb{D}$ and any sequence $(t_n)_{n=1}^\infty\subset\mathbb{D}$, $t_n\to t$. Instead of the sequence $(f_n)_{n=0}^\infty$ considered above, we define $g_n\in C(\Delta)$ to be the Gelfand transform of $q(t_n)$, for $n\in\N$. Modifying Claims 1--3 in obvious ways we obtain:

\vspace*{1mm}
\begin{itemize}[leftmargin=24pt]
\setlength{\itemsep}{3pt}
\item $\Psi(g_n)=\gamma(\pi(\rho(g_n)))$ for $n\in\N$;

\item $\sot_{n\to\infty}\Psi(g_n)=\gamma(q(t))$;

\item $g_n\xrightarrow[]{\mathmakebox[12pt]{}}\widehat{q(t)}$ in $L^2(\Delta,\mathsf{E}_{\mathbf{x},\mathbf{x}})$, for every $\mathbf{x}\in\mathbb{H}$;

\item $g_n\xrightarrow[]{\mathmakebox[12pt]{}}\widehat{q(t)}$ in $L^2(\lambda)$,
\end{itemize}

\vspace*{1mm}\noindent
and, likewise for $(f_n)_{n=0}^\infty$, the last assertion implies that $g_n\xrightarrow[]{}\widehat{q(t)}$ in $L^2(\mu)$ which in turn yields
$$
Q(t)=\rho\widehat{q(t)}=\sot_{n\to\infty}\rho(g_n)=\sot_{n\to\infty}Q(t_n).
$$
Therefore, $(Q(t))_{t\in\mathbb{D}}$ is a~\SOT-continuous semigroup and hence it is a~$C_0$-semigroup (see Remark~\ref{cont1_R}). Moreover, for any $t\in\mathbb{D}$, we have 
$$
\widehat{\pi Q(t)}=\widehat{\pi\rho\widehat{q(t)}}=\theta\rho\widehat{q(t)}=\widehat{q(t)},
$$
hence $\pi Q(t)=q(t)$ and $(Q(t))_{t\in\mathbb{D}}$ is a~lift of $(q(t))_{t\in\mathbb{D}}$. Consequently, in order to finish the proof we need to show that condition ($\star$) is satisfied for Calkin's representation.

\vspace*{2mm}\noindent
\underline{\it Part 4.} First, we reduce our task to showing ($\star\star$).

\vspace*{2mm}\noindent
{\it Claim 4. }($\star\star$) implies ($\star$).

\vspace*{2mm}
For any $x\in\Hh$ and $\mathbf{x}\in\mathbb{H}$, denote the two functionals mentioned in ($\star$) by $\Lambda_x$ and $\Phi_{\mathbf{x}}$, respectively, i.e. $\Lambda_x g=\ip{\rho(g)x}{x}$ and $\Phi_{\mathbf{x}} g=\ip{\gamma\circ\pi\circ\rho(g)\mathbf{x}}{\mathbf{x}}$. We regard them as positive functionals on the space $C(\Delta)_\R$ of real-valued continuous functions on $\Delta$. The measures corresponding to $\Lambda_x$ and $\Phi_{\mathbf{x}}$ via the Riesz representation theorem are defined on open set by $\kappa(V)=\sup\Lambda_x g$ and $\nu(V)=\sup\Phi_{\mathbf{x}} g$, where the suprema are taken over all $g\in C(\Delta)_\R$ such that $0\leq g\leq 1$ and $\mathrm{supp}(g)\subset V$ (see, e.g., \cite[Thm.~2.14]{rudin}). Since they are also regular, we infer that for condition ($\star$) to hold it suffices that $\Phi_{\mathbf{x}}-\Lambda_x$ is a~positive functional as stated in ($\star\star$).

\vspace*{2mm}\noindent
{\it Claim 5. }($\star\star$) holds true.

\vspace*{2mm}
Recall that for every $T\in\BB(\Hh)$ we have $\gamma(\pi(T))\mathbf{x}=[(T\xi_n)]_{\sim}$ for any equivalence class $\mathbf{x}=[(\xi_n)]_{\sim}\in\mathscr{W}^{\usim}$. Our goal is to show that for any fixed unit vector $x_0\in\Hh$ there is a~weakly null sequence $\mathbf{w}(x_0)\subset\Hh$ such that for every $T\in\mathcal{E}_0=\rho(C(\Delta))$, $T\geq 0$, we have
\begin{equation}\label{T_part5}
\ip{Tx_0}{x_0}\leq \mathop{\mathrm{LIM}}\limits_{n,\mathscr{U}}\big(\ip{T\mathsf{w}(x_0)_n}{\mathsf{w}(x_0)_n}\big)_{n=1}^\infty.
\end{equation}
This condition can be rewritten in terms of the spectral measure $\mathsf{E}^0$ of $\mathcal{E}_0$. Namely, since for any $x\in\Hh$ we have $\ip{Tx}{x}=\int_\Delta \widehat{T}\,\dd\mathsf{E}^0_{x,x}$, condition \eqref{T_part5} is equivalent to 
\begin{equation}\label{g5}
    \int_\Delta f\,\dd \mathsf{E}^0_{x_0,x_0}\leq \mathop{\mathrm{LIM}}\limits_{n,\mathscr{U}}\Big(\int_\Delta f\,\dd\mathsf{E}^0_{\mathsf{w}(x_0)_n,\mathsf{w}(x_0)_n}\Big)_{n=1}^\infty\quad\mbox{for every }\,\, f\in C(\Delta),\, f\geq 0.
\end{equation}

\vspace*{2mm}
In order to find a desired map $x\mapsto\mathsf{w}(x)\in\mathscr{W}^{\usim}$, pick a~sequence $(\mathcal{F}_n)_{n=1}^\infty$ of partitions $\mathcal{F}_n=\{F_{n,1},\ldots,F_{n,k_n}\}$ of $\Delta$ into pairwise disjoint Borel sets such that for all $n\in\N$ and $1\leq j\leq k_n$, we have $\mathrm{int}\,F_{n,j}\neq\varnothing$ and $\mathrm{diam}\,F_{n,j}<\tfrac{1}{n}$. To see that such a~sequence exists, fix $n\in\N$ and pick any finite open cover $\{U_i\}_{i=1}^N$ of $\Delta$ such that $\mathrm{diam}\,U_i<\tfrac{1}{n}$ for each $1\leq i\leq N$. After replacing each $U_i$ by $\mathrm{int}\,\mathrm{cl}\, U_i$ we may assume that all $U_i$'s are regularly open. Let $\mathcal{F}=\{F_j\}_{j=1}^M$ be the collection of all nonempty minimal (with respect to inclusion) sets belonging to the~$\sigma$-algebra generated by $\{U_i\}_{i=1}^N$. Plainly, each $F_j$ is of the form 
\begin{equation}\label{Fj_form}
F_j=\bigcap_{i\in I}U_i\cap\bigcap_{i\in J}\,(\Delta\setminus U_i)\quad\mbox{for some }\,\,I,J\subseteq\{1,\ldots,N\},
\end{equation}
hence $\mathcal{F}$ is a~partition of $\Delta$ into pairwise disjoint Borel sets, each of which has diameter smaller than $\tfrac{1}{n}$. It remains to show that each $F_j$ has nonempty interior. Suppose the contrary and, assuming $F_j$ is given by \eqref{Fj_form}, pick the smallest $i_0\in J$ such that 
$$
\mathrm{int}\,\Bigg[\bigcap_{i\in I}U_i\cap\!\!\bigcap_{i\in J,i\leq i_0}\!\!(\Delta\setminus U_i)\Bigg]=\varnothing.
$$
This means that $V\coloneqq\mathrm{int}\,[\bigcap_{i\in I}U_i\cap\bigcap_{i\in J, i<i_0}(\Delta\setminus U_i)]$ is a~nonempty open set such that $\mathrm{int}\,(V\setminus U_{i_0})=\varnothing$. Hence, $V\subseteq \mathrm{cl}\, U_{i_0}$ and since $U_{i_0}$ is regularly open, we would obtain $V\subseteq U_{i_0}$ which contradicts the fact that $F_j$ is nonempty.

For every $F_{n,j}\in\mathcal{F}_n$ pick a~point $\xi_{n,j}\in F_{n,j}$. Then, for any $f\in C(\Delta)$, we have
\begin{equation}\label{f_riem}
\int_\Delta f\,\dd E^0_{x_0,x_0}=\lim_{n\to\infty}\sum_{j=1}^{k_n} f(\xi_{n,j})\n{\mathsf{E}^0(F_{n,j})x_0}^2.
\end{equation}
Now, we are going to use the assumption that $\Delta$ has no isolated points. It implies that every nonempty set $U\subset\Delta$ contains infinitely many disjoint nonempty sets, hence $\mathsf{E}^0(U)$ is a~projection onto an~infinite-dimensional subspace of $\Hh$. Fix $n\in\N$ and assume that that we have already defined pairwise orthogonal unit vectors $\mathsf{w}(x_0)_1,\ldots,\mathsf{w}(x_0)_{n-1}\in\Hh$ such that $\n{\mathsf{E}^0(F_{i,j})x_0}=\n{\mathsf{E}^0(F_{i,j})\mathsf{w}(x_0)_i}$ for all $1\leq i<n$ and $1\leq j\leq k_i$. Define  $Z=\mathrm{span}\{\mathsf{w}(x_0)_1,\ldots,\mathsf{w}(x_0)_{n-1}\}^\perp$. For each $1\leq j\leq k_n$, the subspace $\mathrm{rg}\,\mathsf{E}^0(F_{n,j})$ is infinite-dimensional, whereas $Z$ is finite-codimensional, hence $\dim (\mathrm{rg}\,\mathsf{E}^0(F_{n,j})\cap Z)=\infty$. We can therefore find a~sequence of orthogonal unit vectors $(v_j)_{j=1}^{k_n}\subset Z$ such that $v_j\in \mathrm{rg}\,\mathsf{E}^0(F_{n,j})$. Define
$$
\mathsf{w}(x_0)_n=\sum_{j=1}^{k_n}\n{\mathsf{E}^0(F_{n,j})x_0}v_j,
$$
which is a unit vector (recall that $\n{x_0}=1$, thus $\n{\mathsf{E}^0(\cdot)x_0}^2$ is a~probabilistic measure), orthogonal to each of $\mathsf{w}(x_0)_1,\ldots,\mathsf{w}(x_0)_{n-1}$ and satisfying 
\begin{equation}\label{e00}
\n{\mathsf{E}^0(F_{n,j})x_0}=\n{\mathsf{E}^0(F_{n,j})\mathsf{w}(x_0)_n}\quad\mbox{for each }\,\, 1\leq j\leq k_n.
\end{equation}
In this way, we obtain an~orthogonal sequence $(\mathsf{w}(x_0)_n)_{n=1}^\infty\subset\Hh$ of unit vectors, hence a~weakly null sequence, satisfying \eqref{e00} for every $n\in\N$.

For any $f\in C(\Delta)$ and $\e>0$, by appealing to \eqref{f_riem} and \eqref{e00}, we can pick $n_0\in\N$ such that for both integrals with respect to $\dd\mathsf{E}^0_{x_0,x_0}$ and $\dd\mathsf{E}^0_{\mathsf{w}(x_0)_n,\mathsf{w}(x_0)_n}$, we have
$$
\Bigg|\int_\Delta f -\sum_{j=1}^{k_n} f(\xi_{n,j})\n{\mathsf{E}^0(F_{n,j})\mathsf{w}(x_0)_n}^2\Bigg|<\e \quad \mbox{for each }\,\, n\geq n_0.
$$
This shows that our choice of the map $x_0\mapsto\mathsf{w}(x_0)$ guarantees that \eqref{g5} holds true with equality, which completes the proof of Claim~5.

\vspace*{2mm}
The last two claims show that Calkin's representation satisfies ($\star$) which, as we explained in Part~3, completes the proof of the theorem.
\end{proof}

As we have seen in the above proof, the issue of continuity of a~lift of $(q(t))_{t\geq 0}$ can be reduced to verification of condition ($\star\star$) which, for Calkin's representation, happens to hold true with equality. However, keeping that condition in its original form gives us a~more general abstract criterion for \SOT-continuity, which may be useful for possible future reference.

\section{Geometric conditions on the spectrum}
\subsection{Twisting maneuver}
The main goal of this section is to provide geometric conditions on the spectrum $\sigma(A)$ which guarantee surjectivity of the connecting maps corresponding to the first derived functor $\varprojlim{}^{(1)}\Ext_2(\Omega_n)$ in Milnor's exact sequence applied to the inverse system $\{\Omega_n,p_n\}_{n\geq 0}$ described in Proposition~\ref{P_spectrum}. Recall that by Milnor's Theorem~\ref{milnor_thm}, we have an~exact sequence
\begin{equation}\label{milnor_for_P}
\begin{tikzcd}
0 \arrow[r] & \varprojlim{}^{(1)}\Ext(S\Omega_n) \arrow[r] & \Ext(\varprojlim \Omega_n) \arrow[r, "P"] & \varprojlim\Ext(\Omega_n) \arrow[r] & 0
\end{tikzcd}
\end{equation}
associated with any $C_0$-semigroup $(q(t))_{t\geq 0}\subset\QQ(\Hh)$ of normal operators. The connecting maps in the inverse system of suspensions $\{S\Omega_n, (Sp_n)_\ast\}_{n\geq 0}$ are defined as
\begin{equation}\label{Sp_formula}
(Sp_n)_\ast\colon \Ext(S\Omega_{n+1})\xrightarrow[\phantom{xx}]{}\Ext(S\Omega_n),\quad (Sp_n)_\ast\tau(g)=\tau(g\circ Sp_n)
\end{equation}
for every $[\tau]\in \Ext(S\Omega_{n+1})$ and $g\in C(S\Omega_n)$. Note that since $Sp_n$ is surjective, we do not need to add the trivial extension to guarantee that $(Sp_n)_\ast\tau$ given by formula \eqref{Sp_formula} is a~$^\ast$-monomorphism. Hence, to show that $(Sp_n)_\ast$ is surjective, we need to ensure that for every unital $^\ast$-monomorphism $\lambda\colon C(S\Omega_n)\to\QQ(\Hh)$ there is a~unital $^\ast$-monomorphism $\tau\colon C(S\Omega_{n+1})\to\QQ(\Hh)$ satisfying
\begin{equation}\label{surj_tau_g}
\tau(g\circ Sp_n)=\lambda(g)\quad\mbox{for every }\,g\in C(S\Omega_n),
\end{equation}
where the equality is understood as unitary equivalence between the both sides regarded as $^\ast$-homomorphisms on $C(S\Omega_n)$. In the sequel, we will denote elements of any suspension space $SX$ by $[x,t]$ ($x\in X$, $t\in I$), remembering that $X\times\{0\}$ and $X\times\{1\}$ collapse to two distinct points.

Our first step is to show that the question of surjectivity of $(Sp_n)_\ast$ can be reduced to a~certain problem of extending $^\ast$-homomorphisms into the Calkin algebra. Notice that all functions of the form $g\circ Sp_n$ occuring in \eqref{surj_tau_g} have the property of preserving antipodal points, that is,
$$
(g\circ Sp_n)([x,t])=(g\circ Sp_n)([-x,t]),
$$
whenever both $[x,t]$ and $[-x,t]$ belong to $S\Omega_{n+1}$. We are now going to enlarge this class of functions in $C(S\Omega_{n+1})$.

Fix $n\in\N_0$ and for any $\alpha\in [0,2\pi)$, define a~section of $S\Omega_n$ by
\begin{equation}\label{section_deff}
\mathcal{S}_\alpha=\big\{[re^{\ii\alpha},t]\in S\Omega_n\colon r>0,\, 0<t<1\big\}.
\end{equation}
Next, we define two sections of $S\Omega_{n+1}$ corresponding to the two (antipodal) square roots of $e^{\ii\alpha}$, that is,
$$
\mathcal{R}_\pm=\big\{[\pm re^{\ii\alpha/2},t]\in S\Omega_{n+1}\colon r>0,\, 0<t<1\big\}.
$$
Let also
$$
\mathcal{R}^\prime=\big\{[x,t]\in\mathcal{R}_+\cup\mathcal{R}_{-}\colon [-x,t]\in S\Omega_{n+1}\big\}.
$$
Now, we consider a subalgebra of $C(S\Omega_{n+1})$ defined by
$$
\Aa_0=\big\{f\in C(S\Omega_{n+1})\colon f([x,t])=f([-x,t])\mbox{ for every }[x,t]\in\mathcal{R}^\prime\big\}.
$$
Clearly, we have $\Aa_0\cong C(S\Omega_{n+1}^{\usim})$, where $S\Omega_{n+1}^{\usim}=S\Omega_{n+1}/{{}_{\!\sim}}$ is the quotient space defined by the relation $[re^{\ii\alpha/2},t]\sim [-re^{\ii\alpha/2},t]$.

Of course, equation \eqref{surj_tau_g} cannot serve as a~definition of $\tau$ because the functions $g\circ Sp_n$ do not exhaust the whole of $C(S\Omega_{n+1})$. As we will see below, we can get rid of the restriction of preserving antipodal points for all pairs of such points except these which correspond to the direction $\alpha/2$, i.e. the pairs $(re^{\ii\alpha/2},-re^{\ii\alpha/2})$. To this end, for any $r>0$, consider the~`circle section' of $\Omega_{n+1}$ defined as $\Omega_{n+1}\cap\mathbb{T}_r$, where $\mathbb{T}_r=\{z\in\C\colon\abs{z}=r\}$, and proceed with the following `twisting maneuver'. Namely, we cut $\mathbb{T}_r$ at the two antipodal points $\pm re^{\ii\alpha/2}$, twist both parts to circles of radii $r^2$ by identifying the cutting points, and glue them together at the one point corresponding to $\pm re^{\ii\alpha/2}$. In other words, we replace $\mathbb{T}_r$ by the wedge sum $\mathbb{T}_{r^2}\vee\mathbb{T}_{r^2}$. Any function on $\mathbb{T}_r$ which preserved just one pair of antipodal points can be now identified with a~function on $\mathbb{T}_{r^2}\vee\mathbb{T}_{r^2}$ which preserves all pairs of antipodal points. We can extend this procedure naturally to the suspension $S\Omega_{n+1}$. This maneuver, on one circle section of $\Omega_{n+1}$ is illustrated on figure~\ref{twisting_figure} below.

\begin{figure}[ht]
\centering

\begin{tikzpicture}[scale=.7]

\coordinate (A) at (55:2.5);
\coordinate (B) at (344:2.5);
\coordinate (A1) at ($(0,0)-(A)$);
\coordinate (B1) at ($(0,0)-(B)$);

\coordinate (E1) at (2,2.8);
\coordinate (E2) at (2.6,3);
\coordinate (E3) at (0,3.5);
\coordinate (E4) at (-2.7,2.7);
\coordinate (E5) at (-3.6,1);
\coordinate (E6) at (-2.8,1.2);

\coordinate (E7) at (0,1.8);
\coordinate (E8) at (0.5,2);
\coordinate (E9) at (1.2,1.5);

\coordinate (E11) at ($(0,0)-(E1)$);
\coordinate (E21) at ($(0,0)-(E2)$);
\coordinate (E31) at ($(0,0)-(E3)$);
\coordinate (E41) at ($(0,0)-(E4)$);
\coordinate (E51) at ($(0,0)-(E5)$);
\coordinate (E61) at ($(0,0)-(E6)$);
\coordinate (E71) at ($(0,0)-(E7)$);
\coordinate (E81) at ($(0,0)-(E8)$);
\coordinate (E91) at ($(0,0)-(E9)$);

\draw[darkdarkblue, thick, fill=darkblue] (A) to [curve through = {(E1) .. (E2) .. (E3) .. (E4) .. (E5) .. (E6) .. (B1) .. (E7) .. (E8) .. (E9)}] (A);

\draw[darkdarkblue, thick, fill=darkblue] (A1) to [curve through = {(E11) .. (E21) .. (E31) .. (E41) .. (E51) .. (E61) .. (B) .. (E71) .. (E81) .. (E91)}] (A1);

\draw[{Stealth[scale=1.4]}-{}] (-3.3,1.8) .. controls (-4,1.6) and (-4.4,-0.3) .. (-4.7,-0.7);
\draw[{Stealth[scale=1.4]}-{}] (-1.9,-3.5) .. controls (-3.4,-2.9) and (-3.35,-2) .. (-4.6,-1.4);
\node[black, scale=.9, xshift=-7pt, yshift=1pt] at (-4.1,-1.1) {$\mathsf{A}(\Omega_{n+1})$};

\node[black, scale=1.1, xshift=0pt, yshift=0pt] at (3.2,2.4) {$\Omega_{n+1}$};

\draw[{Stealth[scale=1.4]}-{}] (315:2.5) .. controls (2.3,-2.4) and (2.5,-2.5) .. (2.8,-3.3);
\node[black, scale=1, xshift=0pt, yshift=2pt] at (2.9,-3.9) {$\Omega_{n+1}\cap\mathbb{T}_r$};

\coordinate (C) at (190:2.5);
\coordinate (D) at (208:2.5);

\draw[darkblue, thick, fill=bluee] (C) to [curve through = {(-3,-0.7) .. (-2.7,-1.4) .. (D) .. (-2.1,-0.5)}] (C);

\draw[blue, ultra thin, dashed] (10:2.5) to [curve through = {(3,0.7) .. (2.7,1.4) .. (28:2.5) .. (2.1,0.5)}] (10:2.5);

\draw[darkblue, thick, fill=bluee] (36:2.5) to [curve through = {(40:2.7) .. (46:2.5) .. (40:2.2)}] (36:2.5);

\draw[blue, ultra thin, dashed] (216:2.5) to [curve through = {(220:2.7) .. (226:2.5) .. (220:2.2)}] (216:2.5);

\draw (-5,0) -- (5,0) (0,-4.4) -- (0,4.4);
\draw[black, thin, dashed] (0,0) circle [radius=2.5];
\draw[black, ultra thick] (A) arc (55:164:2.5);
\draw[black, ultra thick] (235:2.5) arc (235:344:2.5);
\draw[black, ultra thick] (C) arc (190:208:2.5);
\draw[black, ultra thick] (36:2.5) arc (36:46:2.5);

\draw[red, thick, dashed] (70:2.5) -- (250:2.5);
\node[draw=red, fill=red, shape=circle, inner sep=.7mm] at (70:2.5) {};
\node[black, scale=.9, xshift=6pt, yshift=12pt] at (70:2.5) {$re^{\ii\alpha/2}$};
\node[draw=red, fill=red, shape=circle, inner sep=.7mm] at (250:2.5) {};
\node[black, scale=.9, xshift=-2pt, yshift=-11pt] at (250:2.5) {$-re^{\ii\alpha/2}$};

\path[draw=black, {}-{Stealth}, very thick, decoration = {snake, pre length=3pt, post length=7pt}, decorate] (5.6,0) -- (7.4,0);

\draw[black, thin, dashed] (10,2) circle [radius=2];
\draw[black, thin, dashed] (10,-2) circle [radius=2];

\draw[black, ultra thick] (10,0) arc (-90:98:2);
\path (10,2) ++(150:2) coordinate (Q);
\draw[black, ultra thick] (Q) arc (150:186:2);
\path (10,2) ++(240:2) coordinate (Q1);
\draw[black, ultra thick] (Q1) arc (240:270:2);

\draw[black, ultra thick] (10,0) arc (90:278:2);
\path (10,-2) ++(22:2) coordinate (R);
\draw[black, ultra thick] (R) arc (22:42:2);
\path (10,-2) ++(60:2) coordinate (R1);
\draw[black, ultra thick] (R1) arc (60:90:2);

\node[draw=red, fill=red, shape=circle, inner sep=.7mm] at (10,0) {};

\node[rectangle, draw=black, fill=green, minimum size=1mm] at (120:2.5) {};
\path (10,2) ++(10:2) coordinate (Z1);
\node[rectangle, draw=black, fill=green, minimum size=1mm] at (Z1) {};

\node[rectangle, draw=black, fill=my_green1, minimum size=1mm] (N1) at (300:2.5) {};
\path (10,-2) ++(190:2) coordinate (Z2);
\node[rectangle, draw=black, fill=my_green1, minimum size=1mm] at (Z2) {};

\draw[black, fill=my_orange] (201:2.7) rectangle (197:2.3);
\path (10,2) ++(174:2.15) coordinate (Z31);
\path (10,2) ++(162:1.85) coordinate (Z32);
\draw[black, fill=my_orange] (Z31) rectangle (Z32);

\draw[black, fill=my_yellow] (40.7:2.38) rectangle (41.3:2.62);
\path (10,-2) ++(31:1.85) coordinate (Z41);
\path (10,-2) ++(33:2.15) coordinate (Z42);
\draw[black, fill=my_yellow] (Z41) rectangle (Z42);

\node[black, scale=1, xshift=0pt, yshift=-21pt] at (10,-3.8) {$(\Omega_n\cap\mathbb{T}_{r^2})\vee(\Omega_n\cap\mathbb{T}_{r^2})$};
\end{tikzpicture}
\caption{Twisting maneuver on one circle section of $\Omega_{n+1}$}
\label{twisting_figure}
\end{figure}
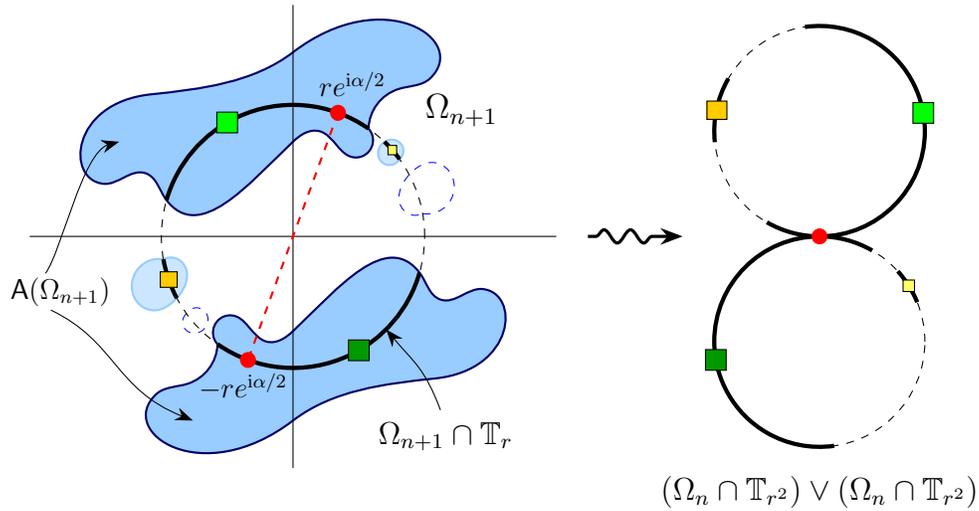

Formally, our operation is described as follows. Let $T_\pm$ be the `upper'/`lower' semicircles of the unit circle $\mathbb{T}$ which are determined by the antipodal points $\pm e^{\ii\alpha/2}$, that is, $T_+=\{e^{\ii (\alpha/2+t\pi)}\colon 0\leq t<1\}$ and $T_-=\mathbb{T}\setminus T_+$. For $x\in\Omega_{n}$, let $\sqrt{x}$ be the set of square roots of $x$, and let $s_0(x)\in\sqrt{x}$ be determined by the condition
$$
s_0(x)\in\left\{\begin{array}{cl}
 \sqrt{\abs{x}}T_+ & \mbox{if } \sqrt{\abs{x}}T_+\cap\Omega_{n+1}\not=\varnothing\\
\sqrt{\abs{x}}T_- & \mbox{otherwise}.
\end{array}\right.
$$
We define $s_1(x)$ similarly by swapping $T_+$ and $T_-$. Then, for $j=0,1$ and $f\in\Aa_0$, we set
$$
\Delta_jf([x,t])=f([s_j(x),t])\quad ([x,t]\in S\Omega_n).
$$

In the next lemma, we introduce the first geometric condition which requires that the subset of $\Omega_{n+1}$ consisting of points whose antipodes are not in $\Omega_{n+1}$ is separated from its complement. For any set $E\subseteq\C$, we denote by $\mathsf{A}(E)$ the set of those $z\in E$ for which $-z\in E$. Of course, $\mathsf{A}(E)$ is closed provided $E$ is closed.

\begin{lemma}\label{delta_cont_L}
Suppose that for the fixed $n\in\N_0$, we have
\begin{equation}\label{sep_assumption}
\oo{\Omega_{n+1}
\setminus\mathsf{A}(\Omega_{n+1})}\cap \mathsf{A}(\Omega_{n+1})=\varnothing.
\end{equation}
Then for every $f\in\mathcal{A}_0$, $\Delta_jf$ are continuous on $S\Omega_n$ {\rm (}$j=0,1${\rm )}.
\end{lemma}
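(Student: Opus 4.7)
The plan is to use the separation hypothesis \eqref{sep_assumption} to split $\Omega_{n+1}$, and hence $\Omega_n$, into two clopen pieces on which the branches $s_j$ are controllable, and then neutralise the one remaining ``branch-cut'' discontinuity by invoking the antipodal condition defining $\mathcal{A}_0$.

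First I would observe that $\mathsf{A}(\Omega_{n+1})=\Omega_{n+1}\cap(-\Omega_{n+1})$ is automatically closed, while hypothesis \eqref{sep_assumption} says exactly that $\Omega_{n+1}\setminus\mathsf{A}(\Omega_{n+1})$ equals its own closure; hence $\mathsf{A}(\Omega_{n+1})$ is clopen in $\Omega_{n+1}$. Since $\mathsf{A}(\Omega_{n+1})$ is $(z\mapsto -z)$-symmetric, pushing through $p_n$ yields a disjoint covering $\Omega_n=\Omega_n^{(2)}\sqcup\Omega_n^{(1)}$, where $\Omega_n^{(2)}=p_n(\mathsf{A}(\Omega_{n+1}))$ and $\Omega_n^{(1)}=p_n(\Omega_{n+1}\setminus\mathsf{A}(\Omega_{n+1}))$; equivalently, $\Omega_n^{(j)}$ is the set of $x\in\Omega_n$ with exactly $j$ preimages in $\Omega_{n+1}$. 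Both pieces are compact and disjoint, with union $\Omega_n$, so both are clopen. It will thus suffice to verify continuity of $\Delta_j f$ separately on $\Omega_n^{(1)}$, on $\Omega_n^{(2)}$, and at the two suspension apexes, the last being immediate from continuity of $f$ on $S\Omega_{n+1}$ together with compactness of $\Omega_{n+1}$.

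On $\Omega_n^{(1)}$ the definitions force $s_0=s_1$ to return the unique preimage in $\Omega_{n+1}$, and this is continuous because $p_n\colon\Omega_{n+1}\setminus\mathsf{A}(\Omega_{n+1})\to\Omega_n^{(1)}$ is a continuous bijection of compact Hausdorff spaces (injectivity is forced by the definition of $\mathsf{A}$), hence a homeomorphism. On $\Omega_n^{(2)}$ both square roots of $x$ live in $\mathsf{A}(\Omega_{n+1})$, and $s_j$ may genuinely jump as $\arg(x)$ crosses $\alpha$. The key claim here is that for any sequence $x_k\to x_0$ in $\Omega_n^{(2)}$, every cluster value of $s_j(x_k)$ must be a square root of $x_0$, so the cluster set is either a singleton (and then $s_j(x_k)$ converges, with $\Delta_jf$ automatically continuous at that limit) or the entire antipodal pair $\pm\sqrt{|x_0|}\,e^{\ii\alpha/2}$, which forces $\arg(x_0)=\alpha$. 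In the latter case both $\pm\sqrt{|x_0|}\,e^{\ii\alpha/2}$ belong to $\Omega_{n+1}$ (because $x_0\in\Omega_n^{(2)}$) and sit on the cut $\mathcal{R}_+\cup\mathcal{R}_-$, so the suspension points $[\pm\sqrt{|x_0|}\,e^{\ii\alpha/2},t]$ lie in $\mathcal{R}'$ for every $t\in(0,1)$. The defining symmetry of $\mathcal{A}_0$ then gives $f([\sqrt{|x_0|}\,e^{\ii\alpha/2},t])=f([-\sqrt{|x_0|}\,e^{\ii\alpha/2},t])$, so both candidate limits of $\Delta_jf$ agree with $\Delta_jf([x_0,t])$ and continuity is established.

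The main technical obstacle is precisely the cluster-value analysis in the previous paragraph: verifying that on $\Omega_n^{(2)}$ the nontrivial sequential discontinuities of the section $s_j$ of $p_n|_{\mathsf{A}(\Omega_{n+1})}$ can only occur at preimages of the cut direction $\arg(x)=\alpha$ and that the two candidate limits must then form an antipodal pair on the cut. Once the clopen decomposition is secured there is no ``crosstalk'' between $\Omega_n^{(1)}$ and $\Omega_n^{(2)}$, so the entire difficulty of the lemma reduces to this routine but honest sequence-level analysis of local branches of the square root near $\mathsf{A}(\Omega_{n+1})$.
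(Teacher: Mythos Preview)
Your argument is correct and rests on the same two ingredients as the paper's proof: the clopen splitting of $\Omega_{n+1}$ (equivalently, of $\Omega_n$) coming from \eqref{sep_assumption}, and the use of the antipodal symmetry in $\mathcal{A}_0$ to absorb the branch-cut jump along the direction $\alpha$. The paper packages the second step slightly differently: instead of a sequential cluster-value analysis on $\Omega_n^{(2)}$, it builds an auxiliary function $f'\in C(S\mathsf{A}(\Omega_{n+1}))$ by reflecting $f$ across the cut so that $f'$ is fully antipodally symmetric, and then observes that $\Delta_0 f([x,t])=f'([\sqrt{x},t])$ is well defined independently of the branch of $\sqrt{\,\cdot\,}$; this sidesteps any case analysis of limits. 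Your route is equally valid and perhaps more elementary, but be aware of one small wrinkle in your singleton case: it can happen that $s_j(x_k)\to w$ with $w\neq s_j(x_0)$ (approach $x_0$ on the cut from one side only), so ``automatically continuous'' is not quite automatic---you still need to invoke the $\mathcal{A}_0$ condition there, exactly as in your two-point case, since $\{w,s_j(x_0)\}=\{\pm\sqrt{|x_0|}\,e^{\ii\alpha/2}\}\subset\mathcal{R}'$.
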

\begin{proof}
Consider the case $j=0$ and notice that $S\mathsf{A}(\Omega_{n+1})$ naturally imbeds into $S\Omega_{n+1}$. We duplicate the `upper half' of $f$ by defining $f^\prime\in C(S\mathsf{A}(\Omega_{n+1}))$ by
$$
f^\prime([x,t])=\left\{\begin{array}{ll}
f([x,t]) & \mbox{if }x\in \abs{x}T_+\\
f([-x,t]) & \mbox{if }x\in\abs{x}T_-.
\end{array}\right.
$$
Of course, continuity of $f^\prime$ follows from the fact that $f\in\Aa_0$. Observe that for every $[x,t]\in Sp_n(\mathsf{A}(\Omega_{n+1}))$, we have $\Delta_0f([x,t])=f^\prime ([\sqrt{x},t])$, where we can choose any element of $\sqrt{x}$ as $f^\prime$ coincides on antipodal points. This shows that $\Delta_0f$ is continuous on $Sp_n(\mathsf{A}(\Omega_{n+1}))$ (recall that $p_n(z)=z^2$). By a~similar argument, duplicating the `lower half' of $f$, we show that $\Delta_0f$ is continuous on $Sp_n(\Omega_{n+1}\setminus\mathsf{A}(\Omega_{n+1}))$. According to our assumption \eqref{sep_assumption}, these two sets are separated except possibly the two vertices of $S\Omega_n$, but there $\Delta_0f$ is plainly continuous because so is $f$.
\end{proof}

Concluding, under assumption \eqref{sep_assumption}, we have two maps $\Delta_0,\Delta_1\colon\mathcal{A}_0\to C(S\Omega_n)$ and therefore we can consider $^\ast$-homomorphisms
\begin{equation}\label{resulting_hom}
C(S\Omega_{n+1})\supset\Aa_0\ni f\xmapsto[\phantom{xxx}]{}\lambda(\Delta_j f)\quad (j=0,1).
\end{equation}
Notice that for every $f\in C(S\Omega_{n+1})$ which preserves antipodal points, i.e. $f([x,t])=f(-[x,t])$ for every $[x,t]\in S\Omega_{n+1}$, we have $\Delta_0f=\Delta_1f$, so if $f=g\circ Sp_n$ for some $g\in C(S\Omega_n)$, then $\lambda(\Delta_j f)=\lambda(g)$ for $j=0,1$. Consequently, our task is to extend the $^\ast$-homomorphisms \eqref{resulting_hom} (modulo unitary equivalence) to the whole of $C(S\Omega_{n+1})$, which will be the topic of the two subsequent subsections. The first approach involves Calkin's representation $\gamma$ and splitting the composition $\gamma\circ \lambda$ into cyclic representations.

\subsection{An empty direction and the CRISP property}
In order to show that the connecting maps in the inverse system $\{\Ext(S\Omega_n),(Sp_n)_\ast\}_{n\geq 0}$ are surjective, we use a~standard technique of extending representations (see, e.g., \cite[Prop.~II.6.4.11]{blackadar}). However, the main difficulty is to ensure that the range algebra do not get larger in the process of extending. This will require some additional geometric conditions on the sets $\Omega_n$, as well as some special properties of the Calkin algebra.

In this subsection, we will use the fact that $\QQ(\Hh)$ has the {\it countable Riesz separation property} (CRISP) which means that for arbitrary sequences $(x_n)_{n=1}^\infty$ and $(y_n)_{n=1}^\infty$ of self-adjoint elements of $\QQ(\Hh)$ satisfying
$$
\ldots\leq x_n\leq x_{n+1}\leq\ldots\leq y_{n+1}\leq y_n\leq\ldots
$$
there exists a~self-adjoint $z\in\QQ(\Hh)$ such that $x_n\leq z\leq y_n$ for each $n\in\N$. (Olsen and Pedersen proved that every corona algebra has the CRISP property; see \cite[Thm.~3.1]{OP}.) 

Fix $n\in\N_0$. Below, we will use the notation introduced in the previous subsection. We also fix Calkin's $^\ast$-representation $\gamma\colon\QQ(\Hh)\to\BB(\mathbb{H})$.

For the fixed unital $^\ast$-monomorphism $\lambda\colon C(S\Omega_n)\to\QQ(\Hh)$, consider $\varrho\coloneqq \gamma\circ\lambda$ which is a~$^\ast$-representation of $C(S\Omega_n)$ on $\mathbb{H}$. By passing to the essential subspace of $\varrho$, if necessary, we may assume that $\varrho$ is nondegenerate and write $\varrho=\bigoplus_{i\in I}\varrho_i$, where each $\varrho_i\colon C(S\Omega_n)\to\BB(\mathbb{H}_i)$ is a~cyclic representation on some $\mathbb{H}_i\subseteq\mathbb{H}$.

Fix an arbitrary $i\in I$ and pick a~unit cyclic vector $\xi_i\in\mathbb{H}_i$ for $\varrho_i$. Let $\p_i$ be the corresponding vector state, i.e. $\p_i(g)=\ip{\varrho_i(g)\xi_i}{\xi_i}$ ($g\in C(S\Omega_n)$), and $\mu_i$ be the probabilistic Borel measure on $S\Omega_n$ that represents $\p_i$. It is well-known that $\varrho_i$, being a~cyclic representation, is unitarily equivalent to the representation given by multiplication operators on the space $L^2(S\Omega_n,\mu_i)$. More precisely, the operator $U_i\colon C(S\Omega_n)\to\mathbb{H}_i$ defined by $U_i(g)=\varrho_i(g)\xi_i$ extends uniquely to a~unitary operator from $L^2(S\Omega_n,\mu_i)$ onto $\mathbb{H}_i$, which we still denote by $U_i$. We then have
$$
\varrho_i(g)=U_i M_g^{\mu_i} U_i^\ast\qquad (g\in C(S\Omega_n)),
$$
where $M_g^{\mu_i}(f)=fg$ (see the proof of \cite[Thm.~II.1.1]{davidson}). 

Recall that for any $\alpha\in [0,2\pi)$, we defined
$$
\mathcal{S}_\alpha=\big\{[re^{\ii\alpha},t]\in S\Omega_n\colon r>0,\, 0<t<1\big\}.
$$
For a fixed $i\in I$, one can obviously find $\alpha$ such that $\mu_i(\mathcal{S}_\alpha)=0$, as $\{\mathcal{S}_\beta\colon 0\leq \beta<2\pi\}$ is an~uncountable collection of pairwise disjoint Borel subsets of $S\Omega_n$. In the next result, however, we require that one can pick $\alpha$ common for all $i\in I$. This is a~rather technical assumption, but observe that it is trivially satisfied if the set $\Omega_n$ simply omits the direction determined by $\alpha$, that is, if $\mathcal{S}_\alpha\cap S\Omega_n=\varnothing$. Another situation when this condition is satisfied is when $\rho$ decomposes as the direct sum of countably many cyclic representations.

\begin{theorem}\label{empty_dir_T}
Assume that condition \eqref{sep_assumption} is satisfied, and there exists $\alpha\in [0,2\pi)$ such that $\mu_i(\mathcal{S}_\alpha)=0$ for all $i\in I$. Then, the homomorphism $(S p_n)_\ast\colon\Ext(S\Omega_{n+1})\to\Ext(S\Omega_n)$ is surjective.
\end{theorem}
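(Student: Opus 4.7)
Fix a unital $^\ast$-monomorphism $\lambda\colon C(S\Omega_n)\to\QQ(\Hh)$; the task is to produce a unital $^\ast$-monomorphism $\tau\colon C(S\Omega_{n+1})\to\QQ(\Hh)$ with $\tau(g\circ Sp_n)\sim\lambda(g)$ unitarily, which by definition yields $(Sp_n)_\ast[\tau]=[\lambda]$. The plan is to first build a $^\ast$-representation $\tilde\tau$ of $C(S\Omega_{n+1})$ on the Calkin space $\mathbb{H}$ that pulls back (via $g\mapsto g\circ Sp_n$) to $\gamma\circ\lambda$, and then to use the CRISP property of $\QQ(\Hh)$ together with the separation condition \eqref{sep_assumption} to descend $\tilde\tau$ into $\gamma(\QQ(\Hh))$.

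For the construction of $\tilde\tau$, decompose $\varrho=\gamma\circ\lambda=\bigoplus_{i\in I}\varrho_i$ as in the discussion preceding the statement, so that $\varrho_i=U_iM^{\mu_i}U_i^\ast$ acts on $L^2(\mu_i)$. By hypothesis, $\mu_i(\mathcal{S}_\alpha)=0$ for every $i$, so the selection $s_0$ from the twisting maneuver is a Borel section of $Sp_n$ that is well defined $\mu_i$-almost everywhere. Setting
$$
\tilde\tau_i(f)=U_i M^{\mu_i}_{f\circ s_0}U_i^\ast\qquad (f\in C(S\Omega_{n+1}))
$$
defines a unital $^\ast$-representation of $C(S\Omega_{n+1})$ on $\mathbb{H}_i$, and $\tilde\tau=\bigoplus_i\tilde\tau_i\colon C(S\Omega_{n+1})\to\BB(\mathbb{H})$ is a unital $^\ast$-representation bounded by $\|f\|_\infty$. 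Because $Sp_n\circ s_0=\mathrm{id}_{S\Omega_n}$, we have $\tilde\tau(g\circ Sp_n)=\varrho(g)$ for every $g\in C(S\Omega_n)$. Moreover, for $f\in\mathcal{A}_0$ one has $f\circ s_0=\Delta_0 f\in C(S\Omega_n)$ (here condition \eqref{sep_assumption} is used through Lemma~\ref{delta_cont_L}), so $\tilde\tau(f)=\gamma(\lambda(\Delta_0 f))\in\gamma(\QQ(\Hh))$.

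For an arbitrary $f\in C(S\Omega_{n+1})_{\mathrm{sa}}$, a Urysohn-type construction — again exploiting \eqref{sep_assumption} to separate $\mathsf{A}(\Omega_{n+1})$ from its complement, which permits a smooth enforcement of the antipode-preservation constraint on $\mathcal{R}'$ — yields uniformly bounded monotone sequences $f_k^-\uparrow f$ and $f_k^+\downarrow f$ pointwise on $S\Omega_{n+1}$ with $f_k^\pm\in\mathcal{A}_0$. Then $\lambda(\Delta_0 f_k^-)$ is increasing and $\lambda(\Delta_0 f_k^+)$ is decreasing in $\QQ(\Hh)_{\mathrm{sa}}$ with $\lambda(\Delta_0 f_k^-)\leq\lambda(\Delta_0 f_k^+)$, so CRISP produces $z\in\QQ(\Hh)_{\mathrm{sa}}$ sandwiched between them for every $k$. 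Dominated convergence on each $L^2(\mu_i)$ forces $\tilde\tau(f_k^\pm)\to\tilde\tau(f)$ in the strong operator topology, and both $\gamma(z)$ and $\tilde\tau(f)$ lie between $\gamma(\lambda(\Delta_0 f_k^\pm))$, whose difference tends strongly to $0$; hence $\gamma(z)=\tilde\tau(f)$. Setting $\tau(f)=z$ and complexifying gives a unital $^\ast$-homomorphism $\tau\colon C(S\Omega_{n+1})\to\QQ(\Hh)$ with $\gamma\circ\tau=\tilde\tau$. Finally, replacing $\tau$ by $\tau\oplus\sigma_0$ for a fixed trivial Busby invariant $\sigma_0$ of $C(S\Omega_{n+1})$ restores injectivity; the pullback class equals $[\lambda]+[\sigma_0\circ(\,\cdot\,\circ Sp_n)]=[\lambda]$, since $\sigma_0\circ(\,\cdot\,\circ Sp_n)$ is a trivial extension of $C(S\Omega_n)$.

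The main obstacle is the third paragraph: constructing the monotone approximations $f_k^\pm\in\mathcal{A}_0$ compatibly with both the continuity constraint of $\mathcal{A}_0$ and the monotonicity requirement hinges on the geometric separation \eqref{sep_assumption}, and the uniqueness identification $\gamma(z)=\tilde\tau(f)$ depends on the strong-operator convergence $\gamma\bigl(\lambda(\Delta_0 f_k^+-\Delta_0 f_k^-)\bigr)\to 0$, which in turn uses the hypothesis $\mu_i(\mathcal{S}_\alpha)=0$ to transfer pointwise convergence of $f_k^\pm$ into $L^2(\mu_i)$-convergence of the pullbacks $f_k^\pm\circ s_0$.
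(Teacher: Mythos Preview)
Your argument is correct and follows the same strategy as the paper: cyclic decomposition of $\gamma\circ\lambda$, monotone pointwise approximation of a given $f\in C(S\Omega_{n+1})_{\mathrm{sa}}$ by functions in $\mathcal{A}_0$ (the paper's Claim~1), CRISP to interpolate, an SOT squeeze to identify $\gamma(z)$ with the multiplication representation, and a final direct sum with a trivial Busby invariant to restore injectivity. The only differences are cosmetic---you work directly with the Borel section $s_0$ and the multipliers $f\circ s_0$ on $L^2(\mu_i)$, whereas the paper routes through the quotient $S\Omega_{n+1}^{\usim}$, the pushforward measures $\widetilde{\nu}_i^{\,(j)},\nu_i^{\,(j)}$ and the map $\iota$; one small imprecision is that the convergence $f_k^\pm\to f$ can only be asserted off $\mathcal{R}_+\cup\mathcal{R}_-$ (since $f$ need not preserve antipodes there), but this is harmless because $\mu_i(\mathcal{S}_\alpha)=0$ forces $f_k^\pm\circ s_0\to f\circ s_0$ in each $L^2(\mu_i)$.
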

\begin{proof}
Recall that, by Lemma~\ref{delta_cont_L}, we have defined maps $\Delta_1,\Delta_2\colon \Aa_0\to C(S\Omega_n)$. Consider two representations of $\Aa_0$ given by
$$
C(S\Omega_{n+1})\supset\Aa_0\ni f\xmapsto[\phantom{xxx}]{}\tau_{0,i}^{(j)}(f)\coloneqq\varrho_i(\Delta_j f)\quad (j=0,1).
$$
Obviously, $\xi_i$ is again a cyclic vector, as we have $[\tau_{0,i}^{(j)}(\Aa_0)\xi_i]=[\varrho_i(C(S\Omega_n))\xi_i]=\mathbb{H}_i$. Hence, the corresponding vector state is given by
\begin{equation}\label{new_v_state}
        \p^{(j)}(f)=\ip{\tau_{0,i}^{(j)}(f)\xi_i}{\xi_i}=\ip{\varrho_i(\Delta_jf)\xi_i}{\xi_i}=\int_{S\Omega_n}\!\!\Delta_jf\,\dd\mu_i.
\end{equation}
For $j=0,1$ define $\Phi_j\colon S\Omega_n\to S\Omega_{n+1}^{\usim}$ by $\Phi_j([x,t])=[s_j(x),t]_\sim$. Let $\w\nu_i^{\,(j)}$ be the pull-back Borel measure on $S\Omega_{n+1}^{\usim}$ defined by 
$$
\w\nu_i^{\,(j)}=\mu_i\circ\Phi_j^{-1}.
$$
Then, applying a~simple change of variables in \eqref{new_v_state}, we obtain
\begin{equation*}
    \p^{(j)}(f)=\int_{S\Omega_{n+1}^{\usim}} \! f\,\dd\w\nu_i^{\,(j)}\qquad (f\in\Aa_0,\, j=0,1).
\end{equation*}
The measure $\w\nu_i^{\,(j)}$ naturally induces a~Borel probabilistic measure $\nu_i^{\,(j)}$ on $S\Omega_{n+1}$ obtained by `unsticking' the points identified by the relation $\sim$. Namely, for any Borel set $E\subseteq S\Omega_{n+1}$, we define
$$
\nu_i^{\,(j)}(E)=\w\nu_i^{\,(j)}(E\setminus (\mathcal{R}_+\cup\mathcal{R}_-)),
$$

\vspace*{1mm}\noindent
where the latter set is understood formally as an~equivalence class. Notice that in fact we have $\nu_i^{\,(j)}(E)=\w\nu_i^{\,(j)}([E]_\sim)$, since $\w\nu_i^{\,(j)}([\mathcal{R_\pm}]_\sim)=0$ which follows from $\mu_i(\mathcal{S}_\alpha)=0$. 

Similarly as for the representation $\varrho_i$, we note that the operator 
$$
U_i^{(j)}\colon C(S\Omega_{n+1}^{\usim})\xrightarrow[\phantom{xxx}]{}\mathbb{H}_i,\quad U_i^{(j)}(f)=\tau_{0,i}^{(j)}(f)\xi_i
$$
extends uniquely to a~unitary operator from $L^2(S\Omega_{n+1}^{\usim},\w\nu_i^{\,(j)})$ onto $\mathbb{H}_i$, still denoted by $U_i^{(j)}$, and such that
\begin{equation}\label{tau_0_rep}
\tau_{0,i}^{(j)}(f)=U_i^{(j)}M_f^{\w\nu_i^{(j)}} U_i^{(j)\ast}\quad (f\in\Aa_0\cong C(S\Omega_{n+1}^{\usim})).
\end{equation}
Consider an~operator 
$$
\iota\colon L^2(S\Omega_{n+1},\nu_i^{\,(j)})\xrightarrow[\phantom{xxx}]{}L^2(S\Omega_{n+1}^{\usim},\w\nu_i^{\,(j)}),\quad \iota(f)([x,t]_\sim)=\left\{\begin{array}{ll}
f([x,t]) & \mbox{if }[x,t]\not\in\mathcal{R}_\pm\\
0 & \mbox{if }[x,t]\in\mathcal{R}_\pm.
\end{array}\right.
$$
Now, we can extend the representation $\tau_{0,i}^{(j)}$ to the whole of $C(S\Omega_{n+1})$ with the aid of the following diagram
$$
\xymatrix{
L^2(S\Omega_{n+1},\nu_i^{\,(j)}) \ar[r]^\iota & L^2(S\Omega_{n+1}^{\usim},\w\nu_i^{\,(j)}) \ar[r]^{\qquad\quad U_i^{(j)}} & \mathbb{H}_i\\
L^2(S\Omega_{n+1},\nu_i^{\,(j)}) \ar[u]_{M_f} & L^2(S\Omega_{n+1}^{\usim},\w\nu_i^{\,(j)}) \ar[l]_{\,\,\,\,\iota^\ast} & \mathbb{H}_i \ar[l]_{\qquad\quad\,\, U_i^{(j)\ast}}
}
$$
that is, for any $f\in C(S\Omega_{n+1})$, we set
\begin{equation}\label{tau_extended}
\tau_i^{(j)}(f)=U_i^{(j)}\iota M_f (U_i^{(j)}\iota)^\ast,
\end{equation}
where $M_f$ is the multiplication operator on $L^2(S\Omega_{n+1},\nu_i^{\,(j)})$ given by $f$. This formula gives rise to representations $\tau_i^{(j)}\colon C(S\Omega_{n+1})\to \BB(\mathbb{H}_i)$, for $j=0,1$. Finally, we define
$$
\tau_i\colon C(S\Omega_{n+1})\xrightarrow[\phantom{xx}]{}\MM_2(\BB(\mathbb{H}_i)),\quad \tau_i=\begin{pmatrix}
\tau_i^{(0)} & 0\\
0 & \tau_i^{(1)}\end{pmatrix}
$$

Given any $j\in\{0,1\}$ and $f\in C(S\Omega_{n+1})$, consider the operator $T_f\coloneqq \iota M_f\iota^\ast$. For any $g\in L^2(S\Omega_{n+1}^{\usim})$ and $h\in L_2(S\Omega_{n+1})$ we have
$$
\ip{h}{\iota^\ast(g)}=\ip{\iota(h)}{g}=\int_{S\Omega_{n+1}^{\usim}}\!\!\!\iota(h)\oo{g}\,\dd\w\nu_i^{(j)}=\int_{S\Omega_{n+1}}\!\!\! h\oo{g}\,\dd\nu_i^{(j)}=\ip{h}{g},
$$
where in the last two expressions we regard $g$ naturally as a~function on $S\Omega_{n+1}$ by putting the value zero on $\mathcal{R}_\pm$. Therefore, what the operator $T_f$ does is first `unsticking' the points identified by $\sim$, multiplying by $f$, and finally gluing the points of $\mathcal{R}_\pm$ back together. But this is nothing else than multiplication by $f$ on $L^2(S\Omega_{n+1}^{\usim})$, with $f$ regarded as a~(not necessarily continuous) function on $S\Omega_{n+1}^{\usim}$ (the values on $\mathcal{R}_\pm$ being ignored).

\vspace*{2mm}\noindent
{\it Claim 1. }For any $j\in\{0,1\}$, and $f\in C(S\Omega_{n+1})$, there is a~sequence $(f_k)_{k=1}^\infty\subset \Aa_0$ such that
$$
f_k([x,t])\xrightarrow[\phantom{xx}]{}f([x,t])\quad  \nu_i^{(j)}\mbox{-a.e. on }S\Omega_{n+1}\quad (i\in I).
$$

\noindent
Indeed, as we have already observed,  $\mathcal{R}_+\cup\mathcal{R}_-$ is of measure zero, so it is enough to guarantee pointwise convergence on $S\Omega_{n+1}\setminus(\mathcal{R}_+\cup\mathcal{R}_-)$. To this end, take any continuous map $\beta\colon [0,1]\to [0,\frac{\pi}{2})$ with $\beta(0)=\beta(1)=0$ and positive elsewhere. Define
$$
V(\beta)=\big\{[x,t]\in S\Omega_{n+1}\colon \vert \mathrm{arg}(xe^{-\ii\alpha/2})\vert<\beta(t)\,\mbox{ or }\,\vert\mathrm{arg}(-xe^{-\ii\alpha/2})\vert<\beta(t)\big\},
$$
where we adopt any convention for $\mathrm{arg}(0)$ so that $[0,t]\not\in V(\beta)$ for any $[0,t]\in S\Omega_{n+1}$. Note that the vertices $\ttb_0$ and $\ttb_1$ of $S\Omega_{n+1}$ do not belong to $V(\beta)$ either, and $V(\beta)$ is an~open subset of $S\Omega_{n+1}$. Let $F(\beta)=(S\Omega_{n+1}\setminus V(\beta))\cup\mathcal{R}_+\cup\mathcal{R}_-$, which is a~closed subset of $S\Omega_{n+1}$, and define a~map $f_\beta\colon F(\beta)\to\C$ by 
$$
f_\beta([x,t])=\left\{\begin{array}{ll}
f([x,t]) & \mbox{if }[x,t]\in S\Omega_{n+1}\setminus V(\beta)\\
f([0,t]) & \mbox{if }[x,t]\in \mathcal{R}_+\cup\mathcal{R}_-\mbox{ and }0\in\Omega_{n+1}\\
(1-t)f(\ttb_0)+tf(\ttb_1) & \mbox{if }[x,t]\in \mathcal{R}_+\cup\mathcal{R}_-\mbox{ and }0\not\in\Omega_{n+1}.
\end{array}\right.
$$ 
It is easy to see that $f_\beta$ is continuous. We take any continuous extension to $S\Omega_{n+1}$ and denote it by the same symbol. Then, by the very definition, $f_\beta\in\Aa_0$. Take any sequence of functions $(\beta_k)_{k=1}^\infty$ as above, pointwise convergent to zero on $[0,1]$. Then, $(f_{\beta_k})_{k=1}^\infty\subset\Aa_0$ converges pointwise to $f$ outside $\mathcal{R}_+\cup\mathcal{R}_-$, as desired.

\medskip
By manipulating with maxima and minima, we may also guarantee that the sequence $(f_k)_{k=1}^\infty$ is pointwise increasing and $f_k\leq f$ for each $k\in\N$, and conversely, that it is pointwise decreasing and $f_k\geq f$ for each $k\in\N$. Hence, for a~fixed $f\in C(S\Omega_{n+1})$ and $j\in\{0,1\}$, let us choose $(f_k)_{k=1}^\infty, (g_k)_{k=1}^\infty\subset\mathcal{A}_0$ so that
$$
f_k([x,t]),\, g_k([x,t])\xrightarrow[\phantom{xx}]{}f([x,t])\quad  \nu_i^{(j)}\mbox{-a.e. on }S\Omega_{n+1}\quad (i\in I),
$$
and $\ldots\leq f_k\leq f_{k+1}\leq\ldots\leq g_{k+1}\leq g_k\leq\ldots $

For any $i\in I$, by Lebesgue's theorem, we have $f_k, g_k\xrightarrow[]{w\ast}f$ in $L^\infty(S\Omega_{n+1}^{\usim})$ and hence, 
$$
\wot_{k\to\infty}\iota M_{f_k}\iota^\ast=\wot_{k\to\infty}\iota M_{g_k}\iota^\ast=\iota M_f\iota^\ast
$$
(see \cite[Lemma~II.2.3]{davidson}). Combining \eqref{tau_0_rep} and \eqref{tau_extended}, and using the fact that multiplication is separately weakly continuous, we infer that
\begin{equation}\label{wot_tau}
\wot\limits_{k\to\infty}\tau_i^{(j)}(f_k)=\wot\limits_{k\to\infty}\tau_i^{(j)}(g_k)=\tau_i^{(j)}(f).
\end{equation}
Also, note that $\iota M_{f_k}\iota^\ast$ and $\iota M_{f_k}\iota^\ast$ are multiplication operators given by continuous functions on $S\Omega_{n+1}^{\usim}$. Therefore, under natural identification, we have $\tau_i^{(j)}(f_k)=\tau_{0,i}^{(j)}(f_k)=\varrho_i(\Delta_j f_k)$ for $k\in\N$, whence
$$
\bigoplus_{i\in I}\tau_i^{(j)}(f_k)=\bigoplus_{i\in I}\varrho_i(\Delta_j f_k)=\varrho(\Delta_jf_k)\in\gamma\circ\lambda(C(S\Omega_n))\quad (k\in\N),
$$
and similarly for the functions $g_k$. Hence, we can define $x_k, y_k\in\QQ(\Hh)$ ($k\in\N$) by 
$$
x_k=\gamma^{-1}\Bigg\{\bigoplus_{i\in I} U_i^{(j)}M_{f_k}{U_i^{(j)}}^\ast\Bigg\},\quad y_k=\gamma^{-1}\Bigg\{\bigoplus_{i\in I} U_i^{(j)}M_{g_k}{U_i^{(j)}}^\ast\Bigg\}. 
$$
Of course, $x_k\leq x_{k+1}\leq y_{k+1}\leq y_k$ for every $k\in\N$. By the CRISP property, we may find $z\in\QQ(\Hh)$ such that $x_k\leq z\leq y_k$ for $k\in\N$. Notice that $\gamma(z)$ is an~upper bound for the operators $\bigoplus_{i\in I} U_i^{(j)}M_{f_k}{U_i^{(j)}}^\ast\in\BB(\mathbb{H})$ and a~lower bound for $\bigoplus_{i\in I} U_i^{(j)}M_{g_k}{U_i^{(j)}}^\ast\in\BB(\mathbb{H})$ ($k\in\N$). These sequence converge in the strong operator topology to their \vspace*{-1pt}supremum and infimum, respectively (see, e.g., \cite[Lemma~I.6.4]{davidson}). Hence, by \eqref{wot_tau}, we obtain $\bigoplus_{i\in I}\tau_i^{(j)}(f)=z$. Therefore, in view of \eqref{tau_extended}, we conclude that the formula 
$$
\oo{\tau}^{(j)}(f)=\gamma^{-1}\Bigg\{\bigoplus_{i\in I}\big(U_i^{(j)}\iota M_f\iota^\ast {U_i^{(j)}}^\ast\big)(f)\Bigg\}\quad\,\,\, (f\in C(S\Omega_{n+1}))
$$
yields an extension of $\gamma^{-1}\{\bigoplus_{i\in I}\tau_{0,i}^{(j)}\}=\lambda\circ\Delta_j$ to a~$^\ast$-homomorphism into $\QQ(\Hh)$. The~whole procedure was done for any fixed $j\in\{0,1\}$. Take, for example, $j=0$ and let $[\sigma]$ be the trivial extension of $S\Omega_{n+1}$. Then $[\oo{\tau}^{(0)}\oplus\sigma]\in\Ext(S\Omega_{n+1})$ is an~extension satisfying $(Sp_n)_\ast([\oo{\tau}^{(0)}\oplus\sigma])=[\lambda]$, as desired.
\end{proof}

\subsection{A cross retract and Kasparov's technical theorem}
The approach presented in this subsection is based on a~`cutting lemma' which, roughly speaking, allows us to extend $\ast$-homomorphisms from $C(X)$ into corona algebras by cutting the spectrum $X$ along its retract. The main tool here is Kasparov's theorem (see, e.g., \cite[Lemma~11.3.5]{loring}); we need a~special case of that result which we recall below.

\begin{theorem}[{\bf Kasparov's Technical Theorem}]\label{KTT_T}
Let $E$ be a $\sigma$-unital \cs-algebra and $\mathscr{C}(E)=\mathscr{M}(E)/E$ be the corona algebra. Suppose that $D$ is a~separable subset of $\mathscr{C}(E)$. If $x,y\in \mathscr{C}(E)\cap D^\prime$ satisfy $x,y\geq 0$ and $xy=0$, then there exists $0\leq z\leq 1$, $z\in\mathscr{C}(E)\cap D^\prime$ such that $zx=0$ and $zy=y$.
\end{theorem}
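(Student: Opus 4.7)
The plan is to lift the problem from the corona algebra $\mathscr{C}(E)$ to the multiplier algebra $\mathscr{M}(E)$, construct a suitable positive element there using a quasicentral approximate unit of $E$, and then project back down. Concretely, I would begin by choosing positive lifts $\tilde x,\tilde y\in\mathscr{M}(E)$ of $x,y$ with $0\leq\tilde x,\tilde y$, together with a separable subset $\tilde D\subset\mathscr{M}(E)$ projecting onto $D$. The hypotheses $xy=0$ and $[D,x]=[D,y]=0$ in the corona then translate to the membership relations $\tilde x\tilde y\in E$ and $[\tilde d,\tilde x],[\tilde d,\tilde y]\in E$ for every $\tilde d\in\tilde D$. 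The task becomes: produce $M\in\mathscr{M}(E)$ with $0\leq M\leq 1$ such that $M\tilde x\in E$, $(1-M)\tilde y\in E$, and $[M,\tilde d]\in E$ for each $\tilde d\in\tilde D$; then $z:=\pi_E(M)$ works.

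The main tool is Arveson's theorem on quasicentral approximate units. Letting $B$ be the separable $C^\ast$-subalgebra of $\mathscr{M}(E)$ generated by $E\cup\tilde D\cup\{\tilde x,\tilde y\}$, and using $\sigma$-unitality of $E$, one obtains an approximate unit $(e_n)_{n\geq 0}\subset E$ with $0\leq e_n\leq e_{n+1}$, $e_n\to 1$ strictly, and $\|[e_n,b]\|\to 0$ for every $b\in B$. Passing to a subsequence, I would arrange the quantitative estimates
$$
\|(1-e_n)\tilde x\tilde y\|<2^{-n},\qquad \|[e_n,\tilde d_k]\|<2^{-n}\quad (k\leq n),
$$
where $(\tilde d_k)$ is a countable dense sequence in $\tilde D$. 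Setting $f_n=e_{n+1}-e_n\geq 0$, so that $\sum_n f_n=1-e_0$ strictly, I would then define
$$
M=\sum_{n=0}^\infty \alpha_n\,f_n^{1/2}\,f_n^{1/2}=\sum_{n=0}^\infty\alpha_n f_n,
$$
for a carefully chosen sequence $\alpha_n\in[0,1]$ with $\alpha_n\nearrow 1$; the series converges strictly in $\mathscr{M}(E)$ and yields $0\leq M\leq 1$.

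The verification splits into three checks. Using $\tilde y\tilde x\in E$ and $e_n\tilde y\tilde x\to\tilde y\tilde x$ in norm, the telescoping identity $M\tilde x=\sum_n\alpha_n(e_{n+1}-e_n)\tilde x$ combined with the rapid decay of $\|(1-e_n)\tilde x\tilde y\|$ gives $M\tilde x\in E$; a symmetric computation, exploiting that $(1-\alpha_n)\searrow 0$ and that $(1-e_n)\tilde y\to 0$ in $\tilde y\mathscr{M}(E)\tilde y+E$, produces $(1-M)\tilde y\in E$. Finally, $[M,\tilde d_k]=\sum_n\alpha_n[f_n,\tilde d_k]$ lies in $E$ because $[e_n,\tilde d_k]\to 0$ fast enough for the telescoping commutator to be a norm-summable series of elements of $E$; density of $(\tilde d_k)$ extends this to all of $\tilde D$.

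The delicate point, and the place where I expect to spend the most effort, is the simultaneous calibration of the subsequence of $(e_n)$ and the interpolation coefficients $\alpha_n$: one needs the gaps $\alpha_{n+1}-\alpha_n$ to shrink fast enough that the commutator series converges in norm in $E$, yet not so fast that $M$ fails to ``capture'' $\tilde y$ entirely. A diagonal argument, refining the approximate unit against finitely many test elements of $\tilde D$ at each stage and against the norms $\|(1-e_n)\tilde x\tilde y\|$, resolves this; this is the standard Arveson--Kasparov balancing act, and once it is set up the three containments follow essentially by telescoping.
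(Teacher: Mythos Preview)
The paper does not prove this theorem; it is quoted from Loring's book (Lemma~11.3.5 there) as a known tool. So there is no proof in the paper to compare against, and the question is simply whether your sketch is a valid route to the result.

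There is a genuine gap, and it is not a matter of calibration. With $\alpha_n\nearrow 1$ monotonically, your element $M=\sum_n\alpha_n f_n$ satisfies
\[
(1-e_0)-M=\sum_n(1-\alpha_n)f_n,
\]
and since $0\le\sum_{n=N+1}^K(1-\alpha_n)f_n\le(1-\alpha_{N+1})(e_{K+1}-e_{N+1})$, the right-hand side is norm-Cauchy, hence lies in $E$. Thus $\pi_E(M)=\pi_E(1-e_0)=1$, so your $z$ is the identity of $\mathscr{C}(E)$. This gives $zy=y$ trivially but $zx=x$, not $0$. No choice of the speed at which $\alpha_n\to 1$ avoids this; the ``balancing act'' you describe does not exist for this particular ansatz. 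The appeal to $\|(1-e_n)\tilde x\tilde y\|\to 0$ is a red herring: that quantity involves $\tilde y$, while $M\tilde x$ does not, and nothing in your construction links the two.

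What is missing is any essential use of the orthogonality $xy=0$. The standard proof does not weight the approximate unit by scalars; it interleaves the $f_n^{1/2}$ with lifts of genuine separators built from $y$ itself, for instance elements $h_n\in\mathscr{M}(E)$ with $\pi_E(h_n)=g_n(y)$ for continuous $g_n\colon[0,\infty)\to[0,1]$ vanishing near $0$ and equal to $1$ on $[1/n,\infty)$. Then $h_n\tilde x\in E$ (because $g_n(y)x=0$ in the corona) and $(1-h_n)\tilde y\to 0$ modulo $E$, and one sets $M=\sum_n f_n^{1/2}h_n f_n^{1/2}$. The quasicentrality of $(e_n)$ then handles $[M,\tilde D]\subset E$. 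Your diagonal and telescoping machinery is the right infrastructure, but the separating content has to come from $y$, not from the scalars $\alpha_n$.
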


Our idea is based on the cutting lemma quoted below (see \cite[Lemma~18.1.2]{loring}). We use the same notation as in \cite[Ch.~18]{loring}: if $X$ is a~compact meric space and $X_0,X_1\subseteq X$ are closed subspaces, we write $X=X_0\cup_Z X_1$ for the `join over $Z$' which means that $X_0\cap X_1=Z$ and there exist retractions $r_j\colon X_j\to Z$ ($j=0,1$). If this is the case, we also have retractions $\eta\colon X\to Z$ and $\eta_j\colon X\to X_j$, for $j=0,1$, defined by
$$
\eta(x)=\left\{\begin{array}{cl} r_0(x), & x\in X_0\\ r_1(x), & x\in X_1,\end{array}\right. \quad
\eta_0(x)=\left\{\begin{array}{cl} x, & x\in X_0\\ r_1(x), & x\in X_1,\end{array}\right. 
\quad\eta_1(x)=\left\{\begin{array}{cl} r_0(x), & x\in X_0\\ x, & x\in X_1.\end{array}\right. 
$$
Let also $\widehat{\eta},\wh{\eta}_0,\wh{\eta}_1\colon C(X)\to C(X)$ be the corresponding composition maps, i.e. $\wh{\eta}(f)=f\circ\eta$ and $\wh{\eta}_j(f)=f\circ\eta_j$ for $j=0,1,$.

Define open disjoint sets $V_0,V_1\subset X$ by $V_j=X\setminus X_j$; define also:
\begin{itemize}[leftmargin=24pt]
\setlength{\itemsep}{2pt}
    \item $\widehat{X}=X_0\sqcup X_1$ (the disjoint union)
    
    \item $\widetilde{X}=\{(x,t)\in X\times [0,1]\colon x\in V_i\,\Longrightarrow\, t=i\}$;
\end{itemize}
the latter set can also be regarded as a~subset of $X\times [0,1]$, specifically, $\widehat{X}=X_0\times\{0\}\cup X_1\times\{1\}$.

\begin{lemma}[{\bf `Cutting lemma'}]\label{cutting_L}
Let $X$ be a compact metric space which is a~join over a~closed zet $Z\subseteq X$, i.e. $X=X_0\cup_Z X_1$, and let $E$ be a~$\sigma$-unital \cs-algebra. For every $\ast$-homomorphism $\p\colon C(X)\to\mathscr{C}(E)$, there is a~unitary equivalence between
$$
\p\oplus (\p\circ\widehat{\eta}\,)\colon C(X)\xrightarrow[]{\phantom{xx}}\mathbb{M}_2(\mathscr{C}(E))
$$
and
$$
(\p\circ\wh{\eta}_0)\oplus (\p\circ\wh{\eta}_1)\colon C(X)\xrightarrow[]{\phantom{xx}}\mathbb{M}_2(\mathscr{C}(E))
$$
In particular, there is an extension of $\p\oplus (\p\circ\wh{\eta})$ to $C(\wh{X})$.
\end{lemma}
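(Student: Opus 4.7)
My plan is to reduce the claim to finding a single positive contraction $z\in\mathscr{C}(E)$ that separates the ideals corresponding to $V_0$ and $V_1$, and then to assemble the required equivalence as an explicit rotation unitary in $\MM_2(\mathscr{C}(E))$ built from $\sqrt{z}$ and $\sqrt{1-z}$. The starting point is the pair of pointwise identities
$$\wh{\eta}_0(h) = \wh{\eta}(h) + (h - \wh{\eta}_1(h)),\qquad \wh{\eta}_1(h) = \wh{\eta}(h) + (h - \wh{\eta}_0(h)) \qquad (h\in C(X)),$$
which follow by direct inspection of the definitions of $\eta_0,\eta_1,\eta$ on $X_0$ and $X_1$. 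Writing $h_0 = h - \wh{\eta}_0(h)$ and $h_1 = h - \wh{\eta}_1(h)$, these functions vanish on $X_0$ and $X_1$ respectively, so $h_0\in C_0(V_0)$, $h_1\in C_0(V_1)$, and $h = \wh{\eta}(h) + h_0 + h_1$. This decomposition will reduce the final verification to a two-by-two matrix calculation.

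Next I would apply Kasparov's Technical Theorem~\ref{KTT_T}. Using metrizability of $X$, pick $f_j\in C(X)$ with $0\leq f_j\leq 1$, $f_j|_{X_j}=0$ and $f_j>0$ on $V_j$ (for instance $f_j(x)=\dist(x,X_j)$); since $V_0\cap V_1=\varnothing$ we have $f_0 f_1 = 0$. Setting $x=\p(f_0)$, $y=\p(f_1)$ and $D=\p(C(X))$, which is separable and commutative so that $x,y\in D\subseteq D'$ and $xy=0$, Theorem~\ref{KTT_T} produces a positive contraction $z\in\mathscr{C}(E)\cap D'$ with $zx=0$ and $zy=y$. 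Since $f_j$ is strictly positive in the ideal $C_0(V_j)\subseteq C(X)$, the closed ideal generated by $f_j$ is exactly $C_0(V_j)$; combined with $z\in\p(C(X))'$ this upgrades to $z\,\p(g)=0$ for every $g\in C_0(V_0)$ and $z\,\p(g)=\p(g)$ for every $g\in C_0(V_1)$. Because $\sqrt{z}$ still commutes with $\p(C(X))$ via continuous functional calculus, a short positivity estimate $\n{\sqrt{z}\,\p(h_0)}^2 = \n{z\,\p(\abs{h_0}^2)}=0$ yields $\sqrt{z}\,\p(h_0)=0$ for every $h_0\in C_0(V_0)$, and symmetrically $\sqrt{1-z}\,\p(h_1)=0$ for $h_1\in C_0(V_1)$.

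Finally I set $u=\begin{pmatrix}\sqrt{z} & \sqrt{1-z}\\ \sqrt{1-z} & -\sqrt{z}\end{pmatrix}\in\MM_2(\mathscr{C}(E))$, which is self-adjoint with $u^2=I$. Using $z\in\p(C(X))'$, a direct two-by-two computation shows that the $(1,1)$-entry of $u\,\mathrm{diag}(\p(h),\p(\wh{\eta}(h)))\,u^*$ equals $z\p(h)+(1-z)\p(\wh{\eta}(h))$, its $(2,2)$-entry equals $(1-z)\p(h)+z\p(\wh{\eta}(h))$, and both off-diagonal entries equal $\sqrt{z(1-z)}\,\p(h_0+h_1)$. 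The off-diagonal vanishes by the previous step, and the diagonal entries simplify, via the identities of the first paragraph together with $z\p(h_0)=0$ and $z\p(h_1)=\p(h_1)$, to $\p(\wh{\eta}(h)+h_1)=\p(\wh{\eta}_0(h))$ and $\p(\wh{\eta}(h)+h_0)=\p(\wh{\eta}_1(h))$, respectively. This proves the claimed unitary equivalence. The final assertion is then automatic: each $\wh{\eta}_j$ factors as the restriction $C(X)\to C(X_j)$ followed by $g\mapsto g\circ\eta_j$, so $(\p\circ\wh{\eta}_0)\oplus(\p\circ\wh{\eta}_1)$ naturally extends to a $^\ast$-homomorphism on $C(\wh{X})=C(X_0)\oplus C(X_1)$, and conjugation by $u$ transports this extension back to $\p\oplus(\p\circ\wh{\eta})$.

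The main obstacle I anticipate is the upgrade step in the second paragraph: passing from the single-element relation $zx=0$ to the ideal-wide annihilation $z\,\p(C_0(V_0))=0$, and then to the square-root version $\sqrt{z}\,\p(h_0)=0$. Both rest on the strict positivity of $f_j$ inside $C_0(V_j)$ combined with the commutation $z\in\p(C(X))'$ (carried to $\sqrt{z}$ via functional calculus), but should require only a short $C^\ast$-calculation. Everything else is either a two-by-two matrix manipulation or a pointwise identity on $X$.
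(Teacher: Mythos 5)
Your proof is correct, and it differs from the one the paper leans on (the paper cites \cite[Lemma~18.1.2]{loring} for this statement, and the essentials of Loring's argument are reproduced in the proof of Theorem~\ref{main_K_T}). Both arguments use Kasparov's Technical Theorem to produce a positive contraction $z\in\mathscr{C}(E)\cap\p(C(X))'$ with $z\p(C_0(V_0))=0$ and $(1-z)\p(C_0(V_1))=0$, but then the routes diverge. Loring passes through the intermediate space $\widetilde{X}\subset X\times I$: one extends $\p$ to $\overline{\p}\colon C(\widetilde{X})\to\mathscr{C}(E)$ via $f\otimes g\mapsto\p(f)g(z)$, pushes through the path of matrices $\mathbf{R}_v=\mathbf{U}^\ast\mathrm{diag}(1,e^{\ii\pi v})\mathbf{U}$ using the pointwise identity $\mathbf{R}_v[f(x)\oplus f(\eta(x))]\mathbf{R}_v^\ast=f(\eta_0(x))\oplus f(\eta_1(x))$ on $\widetilde{X}$, and obtains the conjugating unitary $\mathbf{w}=(\overline{\p}\otimes\id)(\mathbf{R}_{\bullet})$ involving $e^{\ii\pi z}$. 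You instead work entirely inside $\mathbb{M}_2(\mathscr{C}(E))$: you build the self-adjoint unitary $u$ from $\sqrt{z},\sqrt{1-z}$ by functional calculus, note that the decomposition $h=\wh{\eta}(h)+h_0+h_1$ with $h_j\in C_0(V_j)$ reduces everything to a $2\times 2$ computation, and kill the off-diagonal via $\sqrt{z}\,\p(h_0)=0$, $\sqrt{1-z}\,\p(h_1)=0$ (your $\|\sqrt{z}\,\p(h_0)\|^2=\|z\p(\abs{h_0}^2)\|=0$ argument is exactly right). What your version buys is brevity and transparency: no auxiliary topological space, no extended homomorphism on $C(\widetilde{X})$, and the verification is a bare-hands matrix identity. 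What Loring's version buys is the extra structure of a continuous homotopy of $^\ast$-homomorphisms parametrized by $\widetilde{X}$, which the paper actually needs elsewhere — the argument in Theorem~\ref{main_K_T} uses the unitary-valued function $\mathbf{u}\in\mathbb{U}_2(C(\widetilde{X}))$ and the extended map $\overline{\p}$ as bona fide objects, not just as scaffolding. Your handling of the final "in particular" clause (factoring each $\p\circ\wh{\eta}_j$ through $C(X_j)$ to get a map on $C(\wh{X})=C(X_0)\oplus C(X_1)$, then transporting back by $\mathrm{ad}_{u^\ast}$) is also correct.
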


Fix $n\in\N_0$. Using the same notation as in Subsection~5.1, we have the $^\ast$-monomorphism
$$
C(S\Omega_{n+1}^{\usim})\cong\Aa_0\ni f\xmapsto[\phantom{xxx}]{}\p(f)\coloneqq
\lambda(\Delta_0f)\oplus\lambda(\Delta_1f)\in\mathbb{M}_2(\QQ(\Hh)),
$$
as well as the modified version defined by $\widetilde{\p}(f)=
\lambda(\Delta_0f)\oplus\sigma(f)$, where $[\sigma]$ is the zero element of $\Ext(S\Omega_{n+1}^{\usim})$.

For any $0\leq\theta<2\pi$, the line $\R e^{\ii\theta}$ splits the plane into two closed half-planes $H_0$ and $H_1$ such that $H_0\cap H_1=\R e^{\ii\theta}$. For any set $W\subset\C$, we call $W\cap H_0$ and $W\cap H_1$ the `left' part and the `right' part of $W$, respectively (the order of $H_0$ and $H_1$ is irrelevant). In the theorem below we assume that $\Omega_{n+1}$ has two different line sections, both of which are retracts of their both left and right parts of $\Omega_{n+1}$, as shown on figure \ref{kasparov_figure}.

\tdplotsetmaincoords{80}{110}

\begin{figure}[ht]
\centering

\begin{tikzpicture}[tdplot_main_coords, scale=1.5]

\begin{scope}[canvas is xy plane at z=0]
    \path[postaction={decorate, decoration={
            markings, mark=at position 0.1 with
             {\coordinate (A);}}}] 
        (0,2) ellipse (1.4 and 2); 
    \gettikzxy{(A)}{\ax}{\ay}
    
    \coordinate (B) at (-\ax,\ay);
    \draw[color=white, pattern=north east lines, pattern color=bluee2] (0,2) ellipse (1.4 and 2);
    
    \draw[thick, black, dashed] (B) arc (156:270:1.4 and 2);
    
     \coordinate (A1) at (\ax,-\ay);
     \coordinate (B1) at (-\ax,-\ay);
     
     \draw[color=white, pattern=north east lines, pattern color=bluee2] (0,-2) ellipse (1.4 and 2);
     \draw[color=white, pattern=vertical lines, pattern color=bluee2] (0,-2) ellipse (1.4 and 2);
     \draw[color=white, pattern=vertical lines, pattern color=bluee2] (0,2) ellipse (1.4 and 2);
     \draw[thick, black, dashed, postaction={decorate, decoration={
            markings, mark=at position 1 with
             {\coordinate (Q);}}}] (0,0) arc (90:181:1.4 and 2);
     \draw[thick, black, dashed, postaction={decorate, decoration={
            markings, mark=at position 1 with
             {\coordinate (B1);}}}] (Q) arc (181:220:1.4 and 2);         
     \draw[ultra thick, black] (B1) arc (-140:90:1.4 and 2);

    \path[name path=elli, pattern=vertical lines, pattern color=darkdarkblue] (0,0) arc (-90:36:1.4 and 2);
    \path[name path=ell2, pattern=vertical lines, pattern color=darkdarkblue] (A1) arc (-28:90:1.4 and 2);
    
    \draw[thin, shadow] (0,0) ellipse (2.8 and 4);
    \draw[thick, dashed, darkred] (-2.8,0) -- (2.8,0);
    \draw[thick, darkred] (-4.4,0) -- (-2.8,0) (5.1,0) -- (2.8,0);
    \node[darkred, scale=1, xshift=-6pt, yshift=-8pt] at (5.1,0) {$\R e^{\ii\alpha/2}$};

    \draw[shadow, thick] (0,-4.6) -- (0,5.5);
    \node[shadow, scale=1, xshift=3pt, yshift=10pt] at (0,5.5) {$\R e^{\ii\theta}$};

\end{scope}

\begin{scope}[canvas is plane = {O(0,0,0)x(1,0,0)y(0,0,1)}]
    \path[postaction={decorate, decoration={
            markings, mark=at position 0.1 with
             {\coordinate (C1);}, mark=at position 0.4 with
             {\coordinate (C2);}, mark=at position 0.6 with
             {\coordinate (C3);}, mark=at position 0.9 with
             {\coordinate (C4);}}}] (1.6,0) arc (0:180:1.6 and 1.4);
    \draw[darkred,thick,{}-{Stealth}] (C1) arc (18:72:1.6 and 1.4);
    \draw[darkred,thick,{}-{Stealth}] (C4) arc (162:108:1.6 and 1.4);
            
\end{scope}

\path[pattern=vertical lines, pattern color=darkdarkblue] (0,0,0) -- (0,0,2.2) to [bend right=20] (A);

\path[pattern=vertical lines, pattern color=darkdarkblue] (0,0,0) -- (0,0,2.2) to [bend left=36] (A1); 

\path[pattern=vertical lines, pattern color=bluee2] (A) to [bend left=20] (0,0,2.2) .. controls (-0.5,0.2,1) and (-1.5,1,0.2) .. (B);

\path[pattern=vertical lines, pattern color=bluee2] (A1) to [bend right=36] (0,0,2.2) .. controls (-0.5,-0.7,1) and (-1.5,-1.5,0.2) .. (B1);


 \draw[ultra thick, black] (0,0) arc (-90:156:1.4 and 2);
 \draw[ultra thick, black] (0,0,2.2) to [bend right=20] (A);
 \draw[ultra thick, black] (0,0,2.2) to [bend left=36] (A1);
 \draw[ultra thick, black] (0,0,2.2) .. controls (-0.5,0.2,1) and (-1.5,1,0.2) .. (B);
 \draw[ultra thick, black] (0,0,2.2) .. controls (-0.5,-0.7,1) and (-1.5,-1.5,0.2) .. (B1);

\draw[darkred, ultra thick] (0,0,0) -- (0,0,2.2);

\node[black, scale=1, xshift=-6pt, yshift=-6pt] at (0,1.3,2.3) {$\pi_0(S\Omega_{n+1}^{\usim})$};

\draw[{Stealth[scale=1.4]}-{}] (-0.5,1.6,0.2) to [bend left=26] (-1,2.1,1.2);
\node[black, scale=1, xshift=9pt, yshift=6pt] at (-1,2.1,1.2) {$H_0$};

\draw[{Stealth[scale=1.4]}-{}] (0.5,1,0.3) to [bend right=24] (1.4,0.5,-0.7);
\node[black, scale=1, xshift=1pt, yshift=-8pt] at (1.4,0.5,-0.7) {$H_1$};
\end{tikzpicture}
\caption{The section $t=0$ of $S\Omega_{n+1}^{\usim}$ as in Theorem \ref{main_K_T}}
\label{kasparov_figure}
\end{figure}
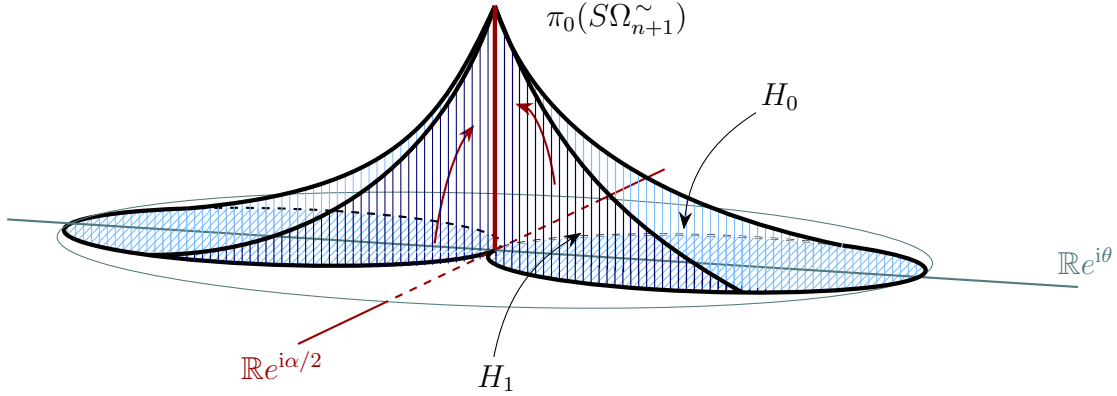

\begin{theorem}\label{main_K_T}
Assume that condition \eqref{sep_assumption} is satisfied, and there exist $\alpha,\theta\in [0,2\pi)$, $\tfrac{\alpha}{2}\not\in \{\theta,\theta-\pi\}$ such that each of the sections
$$
\mathsf{S}_{\alpha/2}=\R e^{\ii\alpha/2}\cap\Omega_{n+1},\,\,\,\,\, \mathsf{S}_\theta=\R e^{\ii\theta}\cap\Omega_{n+1}
$$

\vspace*{1mm}\noindent
is a~retract of both the corresponding left and the right part of $\Omega_{n+1}$. Then, both $\p$ and $\w\p$ can be extended to $^\ast$-monomorphisms $C(S\Omega_{n+1})\to\QQ(\Hh)$ and the homomorphism $(Sp_n)_\ast\colon\Ext(S\Omega_{n+1})\to\Ext(S\Omega_n)$ is surjective.
\end{theorem}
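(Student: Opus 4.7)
The strategy is to extend $\p$ and $\w\p$ from $\Aa_0\cong C(S\Omega_{n+1}^{\usim})$ to $^\ast$-monomorphisms on $C(S\Omega_{n+1})$ by applying the Cutting Lemma (Lemma~\ref{cutting_L}), whose proof rests on Kasparov's Technical Theorem (Theorem~\ref{KTT_T}). The key geometric observation is that since $\theta\notin\{\alpha/2,\alpha/2-\pi\}$, the line $\R e^{\ii\theta}$ is transverse to the $\alpha/2$-axis. Writing $\Omega_{n+1}=\Omega^0\cup_{\mathsf{S}_\theta}\Omega^1$ for the split of $\Omega_{n+1}$ by the $\theta$-line, each closed half $\Omega^j$ contains only one of the two rays of the $\alpha/2$-axis. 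Consequently the antipodal fold $\sim$ never identifies two points within a single $S\Omega^j$, so the composition $S\Omega^j\hookrightarrow S\Omega_{n+1}\twoheadrightarrow S\Omega_{n+1}^{\usim}$ is an embedding.

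I would then verify that $S\Omega_{n+1}^{\usim}$ decomposes as a join $Y_0\cup_Z Y_1$, where $Y_j$ is the embedded image of $S\Omega^j$ and $Z=Y_0\cap Y_1$ is the image of $S\mathsf{S}_\theta$ together with the fold identifications on the $\alpha/2$-axis. A continuous retraction $Y_j\to Z$ is to be assembled from the given retraction $r_\theta^j\colon\Omega^j\to\mathsf{S}_\theta$ and the retraction associated with the second hypothesis for $\mathsf{S}_{\alpha/2}$, exploiting that the two sections meet only at the origin so that the pieces glue consistently. With this join-over-retract structure in hand, Lemma~\ref{cutting_L} applied to $\p$ (respectively $\w\p$) yields that $\p\oplus(\p\circ\wh{\eta})$ is unitarily equivalent to $(\p\circ\wh{\eta}_0)\oplus(\p\circ\wh{\eta}_1)$, and in particular extends to a $^\ast$-homomorphism from $C(\wh{X})=C(S\Omega^0)\oplus C(S\Omega^1)$ into $\MM_2(\QQ(\Hh))$. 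Pulling this back along the natural surjection $\wh{X}\twoheadrightarrow S\Omega_{n+1}$ (which collapses the two copies of $S\mathsf{S}_\theta$ and the fold pairs) delivers the desired extension $\bar\tau\colon C(S\Omega_{n+1})\to\MM_2(\QQ(\Hh))\cong\QQ(\Hh)$; injectivity of $\bar\tau$ is inherited from the injectivity of $\lambda$ and the separating nature of the pulled-back algebras.

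Surjectivity of $(Sp_n)_\ast$ then follows by a direct computation. For any $g\in C(S\Omega_n)$, the function $g\circ Sp_n$ lies in $\Aa_0$ and, since $s_0(x)^2=x$, satisfies $\Delta_0(g\circ Sp_n)=\Delta_1(g\circ Sp_n)=g$. Hence, taking $\bar\tau$ to be the extension of $\w\p$, we get $\bar\tau(g\circ Sp_n)=\w\p(g\circ Sp_n)=\lambda(g)\oplus\sigma(g\circ Sp_n)$, which represents the same element of $\Ext(S\Omega_n)$ as $\lambda(g)$ because $\sigma$ induces the trivial extension. Thus $(Sp_n)_\ast[\bar\tau]=[\lambda]$, proving the surjectivity claim. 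The main obstacle is the verification of the join-over-retract hypothesis for Lemma~\ref{cutting_L}: one must assemble a single continuous retraction $Y_j\to Z$ out of two separately given retractions onto pieces that intersect (at most) at the origin, and check that this assembly descends across the fold quotient. Here the separation assumption \eqref{sep_assumption} is what prevents the fold identifications from creating further identifications elsewhere, while the transversality $\theta\ne\alpha/2,\alpha/2-\pi$ is what guarantees that the $\alpha/2$-axis splits cleanly across the $\theta$-cut so that $Y_0$ and $Y_1$ truly embed.
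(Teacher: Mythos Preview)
Your overall strategy matches the paper's: identify a join-over-retract structure $X_0\cup_Z X_1$ on $X=S\Omega_{n+1}^{\usim}$ with $Z$ essentially the image of the ``cross'' $\mathsf{S}_{\alpha/2}\cup\mathsf{S}_\theta$, and appeal to the Cutting Lemma. The paper even uses your convex combination idea (a weighted average of $r_{\alpha/2}$ and $r_\theta$) to build the retraction onto the cross. The geometric setup is sound, with only the cosmetic difference that the paper splits $X$ along the $\alpha/2$-axis rather than the $\theta$-axis.

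There is, however, a genuine gap in your final step. Lemma~\ref{cutting_L} only produces an extension of $\w\p\oplus(\w\p\circ\wh\eta)$ to $C(\wh X)$, not of $\w\p$ itself. When you pull back along $\wh X\twoheadrightarrow S\Omega_{n+1}$ you obtain a map $\bar\tau$ into $\MM_4(\QQ(\Hh))$ whose restriction to $\Aa_0$ is $\w\p\oplus(\w\p\circ\wh\eta)$, and your computation of $\bar\tau(g\circ Sp_n)$ ignores the second summand. That summand contributes a term of the form $\lambda\big(\Delta_0((g\circ Sp_n)\circ\eta)\big)$, which is $\psi_\ast[\lambda]$ for a certain self-map $\psi$ of $S\Omega_n$ induced by the retraction $\eta$; there is no reason this should be trivial, so $(Sp_n)_\ast[\bar\tau]$ equals $[\lambda]+\psi_\ast[\lambda]$ rather than $[\lambda]$.

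The paper circumvents this by \emph{not} using the Cutting Lemma as a black box. It reproves the lemma in situ: it writes down the explicit continuous family of unitaries $\mathbf{u}(\mathbf{x},v)=\mathbf{R}_v\in\mathbb{U}_2$ implementing the equivalence, pushes it through $\oo\p\otimes\mathrm{id}$ to get a concrete $\mathbf{w}\in\MM_4(\QQ(\Hh))$ in terms of the Kasparov elements $\mathfrak{z}_0,\mathfrak{z}_1$, and then reads off the first $2\times 2$ block of $\mathrm{ad}_{\mathbf{w}^\ast}\big[(\p\circ\wh\eta_0)\oplus(\p\circ\wh\eta_1)\big]$. This block equals $\p$ itself and is expressed entirely through $\wh\eta_0,\wh\eta_1$ and $\mathfrak{z}_j$, so it visibly extends to $C(X_0\sqcup X_1)$. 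That is the missing ingredient: one must open up the Cutting Lemma, extract the explicit formula for $\p$, and see that it is already written in a form that makes sense on $C(\wh X)$.
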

\begin{proof}
We work with the compact metric space $X=S\Omega_{n+1}^{\usim}$ which is split into its left and right part. To be more specific, define
$$
X_0=\big\{[re^{\ii\beta},\,t]_\sim\in S\Omega_{n+1}^{\usim}\colon r\geq 0,\, \tfrac{\alpha}{2}-\pi\leq\beta\leq\tfrac{\alpha}{2},\, t\in I\big\}
$$
and the other side by
$$
\,X_1=\big\{[re^{\ii\beta},\,t]_\sim\in S\Omega_{n+1}^{\usim}\colon r\leq 0,\, \tfrac{\alpha}{2}-\pi\leq\beta\leq\tfrac{\alpha}{2},\, t\in I\big\}.
$$

\vspace*{1mm}
Let $H_0$, $H_1$ be the left and the right part of $\Omega_{n+1}$ corresponding to the section $\mathsf{S}_\theta$. It follows from our assumption that the `cross' $\mathsf{S}_{\alpha/2}\cup\mathsf{S}_\theta$ is a~retract of $H_0$ and $H_1$. Indeed, for $z$ belonging to the `quadrant' given by the left parts with respect to $\alpha/2$ and $\theta$, we can define
$$
r(z)=\frac{\dist(z,\mathsf{S}_{\alpha/2})r_{\theta}(z)+\dist(z,\mathsf{S}_{\theta})r_{\alpha/2}(z)}{\dist(z,\mathsf{S}_{\alpha/2})+\dist(z,\mathsf{S}_{\theta})}
$$

\vspace*{1mm}\noindent
(and $r(0)=0$), where $r_{\alpha/2}$ and $r_\theta$ are the retractions corresponding to the both sections. A~similar formula works for other quadrants. Passing to the suspension and taking the quotient space, we see that the set
\begin{equation*}
\begin{split}
Z\coloneqq\big\{[x,t]_\sim &\in S\Omega_{n+1}^{\usim}\colon x\in\mathsf{S}_\theta,\, t\in I\big\}\\
&\cup\big\{[\pm re^{\ii\alpha/2},t]_\sim\in S\Omega_{n+1}^{\usim}\colon t\in I\big\}
\end{split}
\end{equation*}
is a~retract of $X_0$ and $X_1$, so that we have $X=X_0\cup_Z X_1$. In what follows, we use the notation introduced before Lemma~\ref{cutting_L}, and we work with the $^\ast$-monomorphism $\p$; similar calculations go through for $\widetilde{\p}$.

\vspace*{2pt}
Let $\mu\colon C(X)\to C(\w{X})$ be the embedding given by $\mu(f)(\mathbf{x},v)=f(\mathbf{x})$ for $\mathbf{x}=[x,t]_\sim\in X$ and $v\in I$. We treat $\mu\otimes\id$ as a~homomorphism $\mathbb{M}_2(C(X))\to\mathbb{M}_2(C(\w{X}))$. Define also
\begin{equation*}
\begin{split}
\Phi &=\id\,\oplus\,\wh\eta\colon\, C(X)\xrightarrow[\phantom{xx}]{}\mathbb{M}_2(C(X)),\\
\Psi &=\wh\eta_0\oplus\wh\eta_1\colon C(X)\xrightarrow[\phantom{xx}]{}\mathbb{M}_2(C(X)).
\end{split}
\end{equation*}

\vspace*{1mm}\noindent
{\it Claim 1. }There exists $\mathbf{u}\in\mathbb{U}_2(C(\w{X}))$, that is, a~unitary $2\times 2$ matrix with entries from $C(\w{X})$, such that 
\begin{equation}\label{ad_u}
\mathrm{ad}_{\mathbf{u}}\big[(\mu\otimes\id)\circ \Phi\big]= (\mu\otimes\id)\circ\Psi.
\end{equation}

To prove this claim, consider the matrices $\mathbf{U}, \mathbf{R}_v\in \mathbb{U}_2$, for $0\leq v\leq 1$, given by

\vspace*{1mm}
$$
\mathbf{U}=\frac{1}{\sqrt{2}}\begin{pmatrix}1 & 1\\
-1 & 1
\end{pmatrix},\quad\,\,\, \mathbf{R}_v=\mathbf{U}^\ast\begin{pmatrix}
1 & 0\\
0 & e^{\ii\pi v}
\end{pmatrix}\mathbf{U}.
$$

\vspace*{2mm}\noindent
In other words, $\mathbf{R}_v=\mathbf{U}^\ast(\mathbf{I}_1\oplus e^{\ii\pi v}\mathbf{I}_1)\mathbf{U}$ and $[0,1]\ni v\mapsto \mathbf{R}_v$ is a~continuous path in $\mathbb{U}_2$ connecting the identity matrix $\mathbf{I}_2$ with 
$$
\mathbf{R}_1=\begin{pmatrix}
0 & 1\\
1 & 0
\end{pmatrix}.
$$
For every $(x,v)\in\w{X}$, we then have
\begin{equation}\label{R_v_equality}
    \mathbf{R}_v\big[f(x)\mathbf{I}_1\oplus f(\eta(x))\mathbf{I}_1\big]\mathbf{R}_v^\ast=f(\eta_0(x))\mathbf{I}_1\oplus f(\eta_1(x))\mathbf{I}_1.
\end{equation}
This can be verified directly in the same manner as in \cite[Lemma~18.1.2]{loring}. 

Now, define $\mathbf{u}(\mathbf{x},v)=\mathbf{R}_v$ for any $(\mathbf{x},v)\in \w{X}$. Then, for every $f\in C(X)$ and $(\mathbf{x},v)\in\w{X}$, formula \eqref{R_v_equality} implies that
\begin{equation*}
    \begin{split}
        \big\{\mathbf{u} (\mu\otimes\id)\circ\Phi(f)\mathbf{u}^\ast\big\}(\mathbf{x},v) &=\mathbf{R}_v (\mu\otimes\id)\big[f(x)\mathbf{I}_2\oplus f(\eta(x))\mathbf{I}_2\big]\mathbf{R}_v^\ast\\
        &=(\mu\otimes\id)\big(f(\eta_0(x))\mathbf{I}_1\oplus f(\eta_1(x))\mathbf{I}_1\big)\\
        &=\big\{(\mu\otimes\id)\circ\Psi(f)\big\}(\mathbf{x},v),
    \end{split}
\end{equation*}
which shows that formula \eqref{ad_u} holds true, and finishes the proof of our~claim.

\vspace*{2mm}
Next, as in \cite[Lemma~18.1.1]{loring}, we extend both \vspace*{-1pt}summands $f\mapsto \lambda(\Delta_jf)$ ($j=0,1$) of $\p$ to $^\ast$-homomorphisms defined on $C(\w{X})$ and taking values in $\QQ(\Hh)$. More precisely, pick any functions $f_0,f_1\in C(X)$ such that $f_i(x)>0$ if and only if $x\in V_i=X\setminus X_i$. As for each $j=0,1$, $\lambda(\Delta_jf_0)$ and $\lambda(\Delta_jf_1)$ are positive and orthogonal elements of $\QQ(\Hh)$, we can apply Kasparov's Theorem~\ref{KTT_T} to the separable set $D=\mathrm{rg}\,(\lambda\circ\Delta_0)\cup\mathrm{rg}\,(\lambda\circ\Delta_1)$. Thus, we obtain $\mathfrak{z}_0,\mathfrak{z}_1\in\QQ(\Hh)\cap D^\prime$ with $0\leq\mathfrak{z}_0,\mathfrak{z}_1\leq 1$ such that
\begin{equation}\label{K_system}
\left\{\begin{array}{lcl}
    \mathfrak{z}_j\lambda(\Delta_j f_0) & = & 0\\
    \mathfrak{z}_j\lambda(\Delta_j f_1) & = & \lambda(\Delta_j f_1)
\end{array}\right.\quad\,\,\,\, (j=0,1).
\end{equation}
Then, for $j=0,1$, we consider the $^\ast$-homomorphism
$$
\Lambda_j\colon C(X\times I)\xrightarrow[\phantom{xx}]{}\QQ(\Hh),\quad \Lambda_j(f\otimes g)=\lambda(\Delta_jf)g(\mathfrak{z}_j),
$$
with the latter factor given by functional calculus. Equations \eqref{K_system} imply that $\Lambda_j$ vanishes on the ideal $C_0((V_0\times (0,1])\cup (V_1\times [0,1)))$ (see the proof of \cite[Lemma~18.1.1]{loring}), from which it follows that $\Lambda_j$ defines an~extension of $\lambda\circ\Delta_j$ to $C(\w{X})$. For simplicity, \vspace*{-1pt}we denote this extension by $\Lambda_j$ keeping in mind that $\Lambda_j(f-h)=0$ whenever $f$ and $g$ agree on $\w{X}$. Therefore, 
$$
\oo{\p}\colon C(\w{X})\xrightarrow[\phantom{xx}]{}\mathbb{M}_2(\QQ(\Hh)),\quad \oo{\p}=\Lambda_0\oplus\Lambda_1
$$
is an extension of $\p$.

Now, define $\mathbf{w}\in\mathbb{M}_4(\QQ(\Hh))$ by $\mathbf{w}=(\oo{\p}\otimes\id)(\mathbf{u})$, where $\mathbf{u}\in\mathbb{U}_2(C(\w{X}))$ is produced by Claim~1. Here, we have $\id\colon\mathbb{M}_2(\C)\to\mathbb{M}_2(\C)$. Then, using \eqref{ad_u}, we obtain
\begin{equation*}
    \begin{split}
        \mathrm{ad}_{\mathbf{w}}\big[\p\oplus (\p\circ \wh\eta)\big] &=\mathrm{ad}_{\mathbf{w}}\big[(\p\otimes \id)\circ\Phi\big]\\
        &=\mathrm{ad}_{\mathbf{w}}\big[(\oo{\phi}\otimes\id)\circ (\mu\otimes\id)\circ\Phi\big]\\
        &=(\oo{\phi}\otimes\id)\circ\mathrm{ad}_{\mathbf{u}}\big[(\mu\otimes\id)\circ\Phi\big]\\
        &=(\oo{\phi}\otimes\id)\circ (\mu\otimes\id)\circ\Psi\\
        &=(\p\otimes\id)\circ\Psi\\
        &=(\p\circ\wh\eta_0)\oplus (\p\circ\wh\eta_1).
    \end{split}
\end{equation*}
Hence, 
\begin{equation}\label{adw_star}
    \p\oplus(\p\circ\wh\eta)=\mathrm{ad}_{\mathbf{w}^\ast}\big[(\p\circ\wh\eta_0)\oplus (\p\circ\wh\eta_1)\big].
\end{equation}
On the other hand, recall that
\begin{equation*}
\mathbf{u}(\mathbf{x},v)=\mathbf{U}^\ast \begin{pmatrix}
1 & 0\\
0 & e^{\ii\pi v}
\end{pmatrix}\mathbf{U}=\frac{1}{2}\begin{pmatrix}
1 & 1\\
1 & 1
\end{pmatrix}+\frac{1}{2}e^{\ii\pi v}\begin{pmatrix}
1 & -1\\
-1 & 1
\end{pmatrix},
\end{equation*}
which means that
$$
\mathbf{u}=\frac{1}{2}\,\mathbf{1}\otimes \begin{pmatrix}
1 & 1\\
1 & 1
\end{pmatrix}+\frac{1}{2}\,e^{\ii\pi v}\otimes\begin{pmatrix}
1 & -1\\
-1 & 1
\end{pmatrix},
$$
where $\mathbf{1}\in C(\w{X})$ is the constant $1$ function. Notice that $\oo{\p}(\mathbf{1})=1_{\QQ(\Hh)}\oplus 1_{\QQ(\Hh)}\in\mathbb{M}_2(\QQ(\Hh))$, whereas
$$
\oo{\p}(e^{\ii\pi v})=(\Lambda_0\oplus\Lambda_1) (\mathbf{1}\otimes e^{\ii\pi v})=\begin{pmatrix}
\exp(\ii\pi \mathfrak{z}_0) & 0\\
0 & \exp(\ii\pi\mathfrak{z}_1)
\end{pmatrix}\in\mathbb{M}_2(\QQ(\Hh)).
$$
Therefore,
\begin{equation*}
     \begin{split}
        \mathbf{w} &=\frac{1}{2}(\oo{\p}\otimes\id)\mathbf{1}\otimes\begin{pmatrix} 1 & 1\\ 1 & 1\end{pmatrix}+\frac{1}{2}(\oo{\p}\otimes\id)e^{\ii\pi v}\otimes \begin{pmatrix} 1 & -1\\ -1 & 1\end{pmatrix}\\[5pt]
        &=\frac{1}{2}\begin{pmatrix}
        1+\exp(\ii\pi\mathfrak{z}_0) & 0 & 1-\exp(\ii\pi\mathfrak{z}_0) & 0\\
        0 & 1+\exp(\ii\pi\mathfrak{z}_1) & 0 & 1-\exp(\ii\pi\mathfrak{z}_1)\\
        1-\exp(\ii\pi\mathfrak{z}_0) & 0 & 1+\exp(\ii\pi\mathfrak{z}_0) & 0\\
        0 & 1-\exp(\ii\pi\mathfrak{z}_1) & 0 & 1+\exp(\ii\pi\mathfrak{z}_1)
        \end{pmatrix}.
     \end{split}
\end{equation*}
Calculating the first $2\times 2$ direct summand of the right-hand side of \eqref{adw_star}, we obtain
\begin{equation*}
\begin{split}
\p=\frac{1}{2}\big[(1 &+\cos(\pi\mathfrak{z}_0))\,\lambda\circ\Delta_0\circ \wh\eta_0+(1-\cos(\pi\mathfrak{z}_0))\,\lambda\circ\Delta_0\circ\wh\eta_1\big]\\
&\oplus \frac{1}{2}\big[(1+\cos(\pi\mathfrak{z}_1))\,\lambda\circ\Delta_1\circ \wh\eta_0+(1-\cos(\pi\mathfrak{z}_1))\,\lambda\circ\Delta_1\circ\wh\eta_1\big].
\end{split}
\end{equation*}
By virtue of this formula, we may extend $\p$ to a~$^\ast$-monomorphism $\wh\p\colon C(X_0\sqcup X_1)\to\QQ(\Hh)$. Indeed, for any $\wh{f}=\wh{f}_0\sqcup\wh{f}_1\in C(X_0\sqcup X_1)$, we define
\begin{equation*}
\begin{split}
\wh\p(\wh{f})=\frac{1}{2}\big[(1 &+\cos(\pi\mathfrak{z}_0))\,\lambda\circ\Delta_0({\wh{f}_0}^{\,\prime}\circ\eta_0)+(1-\cos(\pi\mathfrak{z}_0))\,\lambda\circ\Delta_0(\wh{f}_1^{\,\prime}\circ\eta_1)\big]\\
&\oplus \frac{1}{2}\big[(1+\cos(\pi\mathfrak{z}_1))\,\lambda\circ\Delta_1(\wh{f}_0^{\,\prime}\circ\eta_0)+(1-\cos(\pi\mathfrak{z}_1))\,\lambda\circ\Delta_1(\wh{f}_1^{\,\prime}\circ\eta_1)\big],
\end{split}
\end{equation*}
where $\wh{f}_j^{\,\prime}$ is any continuous extension of $\wh{f}_j$ to $X$. The map $\wh{f}$, being defined for every continuous function on the disjoint union of $X_0$ and $X_1$, is in particular defined for every $f\in C(S\Omega_{n+1})$. This follows from the observation that tearing $S\Omega_{n+1}^{\usim}$ along the `cross' $Z$ we get rid of any pairs of antipodal points.

Denote by $\tau_0, \tau_1\colon C(S\Omega_{n+1})\to\QQ(\Hh)$ the above obtained extensions of $\lambda\circ\Delta_0$ and $\lambda\circ\Delta_1$, respectively. As we have observed earlier, $\tau_j(g\circ Sp_n)=\lambda(g)$ for $j=0,1$ and for every $g\in C(S\Omega_n)$. Hence, taking the zero extension $[\sigma]$ of $S\Omega_{n+1}$ we obtain $[\tau_0\oplus\sigma]\in\Ext(S\Omega_{n+1})$ satisfying $(Sp_n)_\ast([\tau_0\oplus\sigma])=[\lambda]$, which completes the proof.
\end{proof}

\section{Applications}
Given a $C_0$-semigroup $(q(t))_{t\geq 0}\subset\QQ(\Hh)$, Proposition~\ref{P_spectrum} produces an~extension $\Gamma\in\Ext(\Delta)$, where $\Delta=\varprojlim\Omega_n$ is described exclusively in terms of the spectrum $\sigma(A)$ of the generator of $(q(t))_{t\geq 0}$. Moreover, under natural `BDF-type' conditions, Proposition~\ref{P_kernel} says that $\Gamma\in\mathrm{ker}\,P$, where $P$ is the induced map onto the inverse limit of the groups $\Ext(\Omega_n)$ ($n=0,1,2,\ldots$). Hence, in order to guarantee that $\Gamma=\Theta$, it is enough to show that the first derived functor in Milnor's exact sequence \eqref{milnor_for_P} vanishes. If this is indeed the case, then by Lemma~\ref{lifting_L}, there exists a~lift of $(q(t))_{t\in\mathbb{D}}$ to a~dyadic semigroup in $\BB(\Hh)$. By virtue of Theorem~\ref{no_isolated_T}, the obtained lifting is \SOT-continuous, provided that the spectrum $\Delta$ has no isolated points.

Recall that, by definition, the functor $\varprojlim{}^{(1)}$ applied to an~inverse system of groups $\{G_n,p_n\}_{n=0}^\infty$ returns the cokernel of the map
$$
\prod_{n=0}^\infty G_n\ni (a_0,a_1,\ldots)\xmapsto[]{\,\,\,\,\mathsf{S}\,\,\,\,}(a_0-p_0(a_1),a_1-p_1(a_2),\ldots)
$$
defined on the full direct product of all the $G_n$. That is, 
$$
\varprojlim{}^{(1)}G_n=\prod_{n=0}^\infty G_n/\mathsf{S}\Big(\prod_{n=0}^\infty G_n\Big)
$$
which measures how big is the loss of information in passing from $\{G_n\}$ to the inverse limit $\varprojlim G_n$. Clearly, the first derived functor is trivial whenever all the connecting maps are surjective. There is also a~more subtle condition guaranteeing that $\varprojlim{}^{(1)}G_n=0$. \vspace*{-1pt}Namely, for $0\leq n<k$ define $p_n^k\colon G_k\to G_n$ as $p_n^k=p_n\circ p_{n+1}\circ\ldots\circ p_{k-1}$, so that for all $n<k<\ell$, we have $p_n^\ell=p_n^k\circ p_k^\ell$. We say that the \vspace*{1pt}inverse system $\{G_n,p_n^k\}$ is {\it semistable} or satisfies the {\it Mittag-Leffler condition}, provided that \vspace*{-1pt}for each $n\in\N_0$ there is $\nu(n)\geq n$ such that for each $k\geq\nu(n)$, the range of $p_n^k$ is the same as the range of $p_n^{\nu(n)}$. By virtue of \cite[Thm.~11.3.2]{geoghegan}, if $\{G_n,p_n^k\}$ is semistable, then $\varprojlim{}^{(1)}G_n=0$.
 
\vspace*{1mm}
The results of Section~5 possibly provide some tools to verify that the~connecting maps corresponding to $\varprojlim{}^{(1)}\Ext(S\Omega_n)$ satisfy the Mittag-Leffler conditions, even though they are not all surjective. At this point, however, let us note that under the conditions of Theorems~\ref{empty_dir_T} and \ref{main_K_T}, assumed for all $n=0,1,2,\ldots$, every connecting map $\Ext_2(\Omega_{n+1})\to\Ext_2(\Omega_n)$ is surjective. Hence, assuming either the conjunction of \eqref{sep_assumption} and the `empty direction' condition, or the conjunction of \eqref{sep_assumption} and the `cross retract' condition, we obtain that the extension $\Gamma\in\Ext(\Delta)$ is the trivial one.

\begin{corollary}\label{empty_cross_C}
Let $(Q(t))_{t\geq 0}$ be a~collection of normal operators in $\BB(\Hh)$ satisfying \eqref{mod_K}. Assume that $(q(t))_{t\geq 0}\subset\QQ(\Hh)$, defined by $q(t)=\pi Q(t)$ for $t\geq 0$, is a~$C_0$-semigroup with respect to some faithful $^\ast$-representation $\gamma\colon \QQ(\Hh)\to\BB(\mathbb{H})$. Suppose that the spectrum of the generator $A$ satisfies one of the~following conditions:

\vspace*{1mm}
\begin{enumerate}[label={\rm (\roman*)}, leftmargin=26pt]
\setlength{\itemsep}{3pt}
\item $\sup\{\abs{\mathrm{Im}\,z}\colon z\in\sigma(A)\}<+\infty$;
\item the set $\mathrm{Re}\,\sigma(A)$ is finite and for any large enough $n\in\N$, every circle section $\Omega_n\cap\mathbb{T}_r$ is a~symmetric set whose each connected component has length smaller than $\pi r$.
\end{enumerate}
Then, there exists a~semigroup $(T(t))_{t\in\mathbb{D}}\subset\BB(\Hh)$ such that $\pi T(t)=q(t)$ for every $t\in\mathbb{D}$.
\end{corollary}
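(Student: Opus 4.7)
The plan is to reduce the statement to the vanishing of $\varprojlim{}^{(1)}\Ext(S\Omega_n)$ via Milnor's exact sequence \eqref{milnor_for_P}, and then read off the lift from Lemma~\ref{lifting_L}. Apply Proposition~\ref{P_spectrum} to the normal $C_0$-semigroup $(q(t))_{t\geq 0}$ to obtain the inverse system $\{\Omega_n,p_n\}$ with $\Omega_n=\oo{\exp(2^{-n}\sigma(A))}$ and the associated extension $\Gamma\in\Ext(\Delta)$. Since each $q(2^{-n})$ has the normal lift $Q(2^{-n})\in\BB(\Hh)$, the essential spectrum of $Q(2^{-n})$ coincides with $\sigma(q(2^{-n}))=\Omega_n$; moreover, a normal Fredholm operator has index zero, so $\Find(\lambda I-Q(2^{-n}))=0$ for every $\lambda\not\in\Omega_n$, and the same holds for $q(2^{-n})$ since Fredholm index descends to compact perturbations. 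Proposition~\ref{P_kernel} then places $\Gamma\in\ker P$, and by exactness of \eqref{milnor_for_P} the triviality of $\Gamma$ reduces to showing $\varprojlim{}^{(1)}\Ext(S\Omega_n)=0$. By the Mittag--Leffler criterion it suffices to verify that the connecting maps $(Sp_n)_\ast\colon\Ext(S\Omega_{n+1})\to\Ext(S\Omega_n)$ are surjective for all large~$n$.

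In case~(i), let $M=\sup\{\abs{\mathrm{Im}\,z}\colon z\in\sigma(A)\}$. For $n$ large enough that $2^{-n}M<\pi/2$, every point of $\exp(2^{-n}\sigma(A))$ has argument in $(-\pi/2,\pi/2)$, so no nonzero element of $\Omega_n$ has its antipode in $\Omega_n$; hence $\mathsf{A}(\Omega_n)\subseteq\{0\}$ is isolated from $\oo{\Omega_n\setminus\mathsf{A}(\Omega_n)}$, securing the separation condition \eqref{sep_assumption}. Taking $\alpha=\pi$ makes $\mathcal{S}_\alpha\cap S\Omega_n=\varnothing$ (in particular $\mu_i(\mathcal{S}_\alpha)=0$ for every cyclic component, trivially), so Theorem~\ref{empty_dir_T} yields the surjectivity of $(Sp_n)_\ast$.

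In case~(ii), the finiteness of $\mathrm{Re}\,\sigma(A)$ means $\Omega_n$ is a finite union of symmetric subsets of concentric circles, so $\Omega_n=\mathsf{A}(\Omega_n)$ and \eqref{sep_assumption} is vacuous. Each symmetric circle section with components of length $<\pi r$ consists of pairs of antipodal arcs separated by open gaps of positive angular width on the circle. Since finitely many circles are involved, I can choose two directions $\theta$ and $\alpha/2$ (with $\alpha/2\notin\{\theta,\theta-\pi\}$) whose radial lines cross every arc transversally in its interior. The resulting sections $\mathsf{S}_\theta$ and $\mathsf{S}_{\alpha/2}$ are finite collections of interior points of the arcs, and each left/right part of $\Omega_{n+1}$ decomposes into ``half-arcs,'' each retracting onto its endpoint lying on the corresponding section; gluing these per-circle retractions produces the required cross retract, and Theorem~\ref{main_K_T} supplies the surjectivity.

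Once $\varprojlim{}^{(1)}\Ext(S\Omega_n)=0$ is established in either case, Milnor's sequence gives $\Gamma=\Theta$, and Corollary~\ref{split_C} (the continuity-free version of Lemma~\ref{lifting_L}) produces the dyadic semigroup $(T(t))_{t\in\mathbb{D}}\subset\BB(\Hh)$ with $\pi T(t)=q(t)$. The main obstacle is the cross-retract construction in case~(ii): one must simultaneously choose $\theta$ and $\alpha/2$ compatible with every circle section and verify that the associated left/right decompositions of $\Omega_{n+1}$ genuinely retract onto the crosses. This is where the assumption that every component has angular length strictly less than $\pi$ (so that directions transverse to every arc on every circle form a nonempty open set) is used essentially.
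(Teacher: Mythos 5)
Your overall strategy matches the paper exactly: invoke Proposition~\ref{P_spectrum} and Proposition~\ref{P_kernel} to place $\Gamma$ in $\ker P$, reduce via Milnor's sequence \eqref{milnor_for_P} to the vanishing of $\varprojlim{}^{(1)}\Ext(S\Omega_n)$, establish this by Mittag--Leffler after showing the connecting maps $(Sp_n)_\ast$ are eventually surjective, and finish with Corollary~\ref{split_C}. Case~(i) is handled essentially identically to the paper.

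Your description of case~(ii), however, is geometrically off. You write that one can choose $\theta$ and $\alpha/2$ ``whose radial lines cross every arc transversally in its interior'' and that ``each left/right part of $\Omega_{n+1}$ decomposes into `half-arcs,' each retracting onto its endpoint lying on the corresponding section.'' But on a single circle $\mathbb{T}_r$, a fixed radial line meets $\mathbb{T}_r$ in only the two antipodal points $\pm re^{\ii\beta}$, so it can cross at most one antipodal pair of arcs; the remaining arcs on that circle are not crossed at all and have no endpoint on the section. The retraction therefore cannot be built by matching each half-arc to an endpoint on the section. The paper's construction is different and more robust: since each component has length $<\pi r$, there is a gap $U$ inside each half of $\mathbb{T}_r$ with $U\cap\Omega_{n+1}=\varnothing$, and on $\Omega_{n+1}\cap\mathbb{T}_r$ one sends every point on the side of $U$ containing $re^{\ii\beta}$ to $re^{\ii\beta}$, and everything on the other side to $-re^{\ii\beta}$; for a circle whose section entirely misses the radial line, the corresponding clopen piece is collapsed to an arbitrary point of $\mathsf{S}_\beta$ on another circle. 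The same applies in the suspension. So your plan lands on the right theorem and the right conclusion, but the mechanism you propose for the cross retract does not actually produce one; the strictly-less-than-$\pi r$ assumption is used to guarantee a gap in each half, not to make ``transversality'' generic as you suggest.

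Finally, a small point on case~(i): you observe $\mathsf{A}(\Omega_n)\subseteq\{0\}$ and then assert this is isolated from $\oo{\Omega_n\setminus\mathsf{A}(\Omega_n)}$. When $0\in\Omega_n$ (which happens exactly when $\mathrm{Re}\,\sigma(A)$ is unbounded below, a situation not excluded by hypothesis~(i)), $0$ is in fact a limit point of $\Omega_n\setminus\{0\}$ and the separation condition \eqref{sep_assumption} fails; the paper elides this by simply asserting $\mathsf{A}(\Omega_n)=\varnothing$, so you are no worse off than the source, but your phrasing draws attention to the issue without resolving it.
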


\begin{proof}
First, notice that if the homomorphisms $(Sp_n)_\ast\colon \Ext(S\Omega_{n+1})\to\Ext(S\Omega_n)$ are surjective for all sufficiently large $n\in\N$, then the inverse system $\{\Ext(S\Omega_n),(Sp_n)_\ast\}_{n=0}^\infty$ satisfies the Mittag-Leffler condition, hence $\varprojlim{}^{(1)}\Ext(S\Omega_n)=0$. 

Assuming (i) we see that for large enough $n\in\N$, we have $\abs{\mathrm{Arg}(z)}<\tfrac{\pi}{2}$ for every $z\in\Omega_{n}$ which implies condition \eqref{sep_assumption} as for any such $n$ we simply have $\mathsf{A}(\Omega_{n})=\varnothing$. Also, the section $\mathcal{S}_\alpha$ defined by \eqref{section_deff} is empty e.g. for $\alpha=\pi$. Therefore, our result follows by combining Theorem~\ref{empty_dir_T} with Corollary~\ref{split_C}.

Under assumption (ii) we have $\mathsf{A}(\Omega_n)=\Omega_n$ for sufficiently large $n\in\N$, hence condition \eqref{sep_assumption} is again satisfied. Moreover, for any such $n$ an arbitrary section $\mathsf{S}_\beta=\R e^{\ii\beta}\cap\Omega_{n+1}$ is a~retract of both the corresponding left and the~right part of $\Omega_{n+1}$. Indeed, we have only finitely many nonempty circle sections $\Omega_{n+1}\cap\mathbb{T}_r$, none of which can contain a~whole arc joining $\pm re^{\ii\beta}$. Hence, on both these two arcs we can find open arcs, say $U$ and $V$ lying in the left and the right part of $\Omega_{n+1}$, respectively, such that $(U\cup V)\cap \Omega_{n+1}=\varnothing$. Then, a~retraction $\rho$ from the left part onto $\mathsf{S}_\beta$ can be defined on the section $\Omega_{n+1}\cap\mathbb{T}_r$ by $\rho(w)=re^{\ii\beta}$ for $w\in\Omega_{n+1}$ lying on an~arc starting at $re^{\ii\beta}$ and disjoint from $U$, and $\rho(z)=-re^{\ii\beta}$ otherwise. Similarly we define a~retraction from the right part of $\Omega_{n+1}$. Therefore, by Theorem~\ref{main_K_T}, for every sufficiently large $n\in\N$, the homomorphism $(Sp_n)_\ast\colon\Ext(S\Omega_{n+1})\to\Ext(S\Omega_n)$ is surjective. As we have already explained, this fact in combination with Corollary~\ref{split_C} yields the result.
\end{proof}

It is not difficult to produce many examples of spectrum for which the~assumption (ii) above is satisfied. For instance, for any finite sets $\{s_1,\ldots,s_m\}\subset\R$ and $\{\alpha_1,\ldots,\alpha_m\}\subset [0,\tfrac{\pi}{2}]$ one can take
$$
\sigma(A)=\bigcup_{j=1}^m\Big\{s_j+\ii t\colon t\in\bigcup_{k\in\Z}\big([\alpha_j,\alpha_j+\tfrac{\pi}{2}]\cup[\alpha_j+\pi,\alpha_j+\tfrac{3\pi}{2}]\big)+2k\pi\Big\}.
$$

Arguing similarly as in Examples~\ref{ex_2k} and \ref{ex_imaginary}, we obtain another class of examples of $\sigma(A)$ for which there exists a~semigroup lift, even though neither the `empty direction' nor the `cross retract' condition is satisfied.

\begin{corollary}\label{milnorr_C}
Under the above assumptions on $(Q(t))_{t\geq 0}$ and $(q(t))_{t\geq 0}$, if
\vspace*{0mm}
\begin{enumerate}[label={\rm (\roman*)}, leftmargin=26pt]
\setlength{\itemsep}{3pt}
\item[{\rm (iii)}] the set $\mathrm{Re}\,\sigma(A)$ is either finite or an~interval, and for every $s\in\mathrm{Re}\,\sigma(A)$, the section $\{t\in\R\colon s+\ii t\in\sigma(A)\}$ is a~half-line,
\end{enumerate}
then there exists a~semigroup $(T(t))_{t\in\mathbb{D}}\subset\BB(\Hh)$ such that $\pi T(t)=q(t)$ for every $t\in\mathbb{D}$.
\end{corollary}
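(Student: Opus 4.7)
The plan is to verify $\Ext(\Delta)=0$ directly, where $\Delta=\varprojlim\Omega_n$ is the compact metric space attached to $\sigma(A)$ by Proposition~\ref{P_spectrum}; the extension $\Gamma$ produced by that proposition is then automatically trivial, and the construction in the proof of Lemma~\ref{lifting_L}, discarding the continuity assertions exactly as in the proof of Corollary~\ref{split_C}, will yield the required dyadic lift $(T(t))_{t\in\mathbb{D}}$.

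The first step is to observe that condition~(iii) forces each $\Omega_n$ to be a~union of concentric circles. Indeed, writing $I_s=\{t\in\R\colon s+\ii t\in\sigma(A)\}$ for $s\in\mathrm{Re}\,\sigma(A)$, the hypothesis that $I_s$ is a~half-line means that $2^{-n}I_s$ has unbounded length, so its image in $\R/2\pi\Z$ is all of $\TT$. Hence
\begin{equation*}
\oo{\exp\big(2^{-n}(s+\ii I_s)\big)}=\big\{z\in\C\colon \abs{z}=e^{2^{-n}s}\big\},
\end{equation*}
and taking the union over $s$ gives that $\Omega_n$ is a~finite disjoint union of concentric circles when $\mathrm{Re}\,\sigma(A)=\{s_1<\ldots<s_m\}$ is finite, and the closed annulus $\{e^{2^{-n}\eta}\leq\abs{z}\leq e^{2^{-n}\zeta}\}$ (collapsing to a~closed disk when $\eta=-\infty$) when $\mathrm{Re}\,\sigma(A)$ is an~interval. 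In the finite case, $p_n(z)=z^2$ sends the $i^{\mathrm{th}}$ circle of $\Omega_{n+1}$ to the $i^{\mathrm{th}}$ circle of $\Omega_n$, so the inverse system splits as the disjoint union of $m$ copies of $\{S^1,z\mapsto z^2\}$, and $\Delta\cong\bigsqcup_{i=1}^m\Sigma_2$. Since any unital $^\ast$-monomorphism on $C(\bigsqcup_iX_i)=\bigoplus_iC(X_i)$ splits along the complementary projections $\tau(1_{X_i})$ (which lift to projections in $\BB(\Hh)$ by the standard projection-lifting property of the Calkin algebra), we obtain $\Ext(\bigsqcup_iX_i)\cong\bigoplus_i\Ext(X_i)$; combined with $\Ext(\Sigma_2)=0$ from Example~\ref{ex_imaginary} this yields $\Ext(\Delta)=0$.

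For the interval case I~would invoke Milnor's sequence \eqref{milnor_for_P}. If $\eta=-\infty$, each $\Omega_n$ is a~closed disk, hence contractible, so both $\Ext(\Omega_n)$ and $\Ext(S\Omega_n)$ vanish and Milnor trivially forces $\Ext(\Delta)=0$. If $\eta>-\infty$, each $\Omega_n$ is homotopy equivalent to~$S^1$, so $S\Omega_n\simeq S^2$ gives $\Ext(S\Omega_n)=0$ via \cite[Cor.~7.1]{BDF} and therefore $\varprojlim{}^{(1)}\Ext(S\Omega_n)=0$; on the other hand $\Ext(\Omega_n)\cong\Z$ is generated by $[z-\lambda]$ with $\lambda$ in the bounded hole, and the factorisation $z^2-\lambda=(z-\sqrt\lambda)(z+\sqrt\lambda)$ (both square roots lying in the bounded hole of $\Omega_{n+1}$) identifies, via Theorem~\ref{T_index}, the connecting homomorphism with multiplication by~$2$ on $\Z$, whose inverse limit is~$0$. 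Milnor therefore once again yields $\Ext(\Delta)=0$.

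The main hurdles I~anticipate are (a)~spelling out carefully that the half-line hypothesis really produces full circles after the exponential is applied, which is routine but deserves a~line; (b)~justifying the direct-sum decomposition $\Ext(\bigsqcup_iX_i)\cong\bigoplus_i\Ext(X_i)$ from the basic structure of unital extensions in $\QQ(\Hh)$; and (c)~the pullback computation on $\pi^1(\Omega_n)$ in the annular sub-case, which needs careful bookkeeping of generators in order to be transported through the Fredholm-index isomorphism of Theorem~\ref{T_index}. No new ideas are required beyond those developed in Sections~2, 3 and~5 of the paper, and once $\Ext(\Delta)=0$ is in hand the final application of the (continuity-free version of the) lifting lemma is entirely mechanical.
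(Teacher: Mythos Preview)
Your proof is correct and largely parallels the paper's own argument. In the interval case you proceed exactly as the paper does---Milnor's sequence together with the homotopy equivalences $\Omega_n\simeq S^1$, $S\Omega_n\simeq S^2$---and you are in fact slightly more careful in separating out the degenerate sub-case $\eta=-\infty$ (disk rather than annulus), which the paper does not mention explicitly. Your explicit identification of the connecting map with multiplication by~$2$ via the factorisation $z^2-\lambda=(z-\sqrt\lambda)(z+\sqrt\lambda)$ is also a helpful detail the paper leaves implicit.

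In the finite case there is a minor but genuine difference of route. The paper applies Milnor's exact sequence directly to the inverse system $\{\Omega_n\}$: with $\Omega_n$ a disjoint union of $m$ circles one has $\Ext(\Omega_n)\cong\Z^m$ and $\Ext(S\Omega_n)=0$, and the connecting maps on $\Z^m$ are multiplication by~$2$, so both outer terms of Milnor's sequence vanish. You instead first identify the inverse limit $\Delta$ itself as $\bigsqcup_{i=1}^m\Sigma_2$ and then use additivity of $\Ext$ over disjoint unions together with $\Ext(\Sigma_2)=0$ from Example~\ref{ex_imaginary}. Your route is a touch more direct and avoids computing $\Ext(S\Omega_n)$ for a disconnected $\Omega_n$; the paper's route stays at the level of the individual $\Omega_n$ and is uniform with the interval case. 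Both arguments ultimately rest on the same solenoid computation, and the final appeal to the lifting lemma (Corollary~\ref{split_C}) is identical.
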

\begin{proof}
First, assume that $\mathrm{Re}\,\sigma(A)=\{s_1,\ldots,s_m\}$. Then each $\Omega_n$ is a~disjoint union of $m$ circles and according to \cite[Lemma~2.12]{BDF} we have $\Ext(\Omega_n)=\Z\oplus\ldots\oplus\Z$ ($m$-fold product). In this case, Milnor's exact sequence \eqref{milnor_for_P} has the form 
$$
\begin{tikzcd}
0 \arrow[r] & \varprojlim{}^{(1)}\Ext(S^2\sqcup\ldots\sqcup S^2) \arrow[r] & \Ext(X) \arrow[r] & \varprojlim \Z\oplus\ldots\oplus\Z \arrow[r] & 0
\end{tikzcd}
$$
and since $\Ext(S^2)=0$ and $\varprojlim \Z\oplus\ldots\oplus\Z=0$, we obtain $\Ext(X)=0$.

Assuming $\mathrm{Re}\,\sigma(A)$ is an~interval, we see that each $\Omega_n$ is an~annulus, thus homotopy equivalent to the circle $S^1$. Similarly, $S\Omega_n$ is homotopy equivalent to $S^2$. Hence, in view of \cite[Thm.~2.14]{BDF}, the corresponding extension groups are the same as the ones for $S^1$ and $S^2$ and the resulting Milnor's exact sequence is the same as the one in Example~\ref{ex_imaginary}. Therefore, $\Ext(X)=\Ext(\Sigma_2)=0$ and appealing to Corollary~\ref{split_C} concludes the proof.
\end{proof}

\vspace*{2mm}\noindent
{\bf Acknowledgement. }I acknowledge with gratitude the support from the National Science Centre, grant OPUS 19, project no.~2020/37/B/ST1/01052.

\bibliographystyle{amsplain}

\end{document}